\theoremstyle{definition}
\newtheorem{Def}{Definition}[subsection]
\newtheorem{Def-Prop}[Def]{Definition-Proposition}
\newtheorem{Th}[Def]{Theorem}
\newtheorem{remark}[Def]{Remark}
\newtheorem{Prop}[Def]{Proposition}
\newtheorem{Lemma}[Def]{Lemma}
\newtheorem{Cor}[Def]{Corollary}
\newtheorem{Conj}{Conjecture}[subsection]
\DeclareMathAlphabet\mathbfcal{OMS}{cmsy}{b}{n}
\DeclareMathOperator*{\clim}{\text{colim}}
\newcommand{\hdot}{{\:\raisebox{3pt}{\text{\circle*{1.5}}}}}
\newcommand{\hdotc}{{\:\raisebox{1pt}{\text{\circle*{1.5}}}}}
 \tikzstyle{int}=[circle, draw,fill=black,outer sep=0,minimum size=3pt, inner sep=0]
  \tikzstyle{ext}=[circle, draw=black,outer sep=0,inner sep=1pt]
 \newcommand{\Beq}{\begin{equation}}
 \newcommand{\Eeq}{\end{equation}}
 \newcommand{\Beqr}{\begin{eqnarray}}
 \newcommand{\Eeqr}{\end{eqnarray}}
 \newcommand{\Beqrn}{\begin{eqnarray*}}
 \newcommand{\Eeqrn}{\end{eqnarray*}}
 \newcommand{\Ba}{\begin{array}}
 \newcommand{\Ea}{\end{array}}
 \newcommand{\Bi}{\begin{itemize}}
 \newcommand{\Ei}{\end{itemize}}
 \newcommand{\Bc}{\begin{center}}
 \newcommand{\Ec}{\end{center}}
\tikzset{snakeit/.style={decorate, decoration={snake, amplitude=.2mm,segment length=1mm}}}
\tikzset{ext/.style={circle, draw,inner sep=1pt}, int/.style={circle,draw,fill,inner sep=2pt},nil/.style={inner sep=1pt}}
\tikzset{cy/.style={circle,draw,fill,inner sep=2pt},scy/.style={circle,draw,inner sep=2pt},scyx/.style={draw,cross out,inner sep=2pt},scyt/.style={draw,regular polygon,regular polygon sides=3,inner sep=0.95pt}}
\tikzset{exte/.style={circle, draw,inner sep=3pt},inte/.style={circle,draw,fill,inner sep=3pt}}
\tikzset{diagram/.style={matrix of math nodes, row sep=3em, column sep=2.5em, text height=1.5ex, text depth=0.25ex}}
\tikzset{diagram2/.style={matrix of math nodes, row sep=0.5em, column sep=0.5em, text height=1.5ex, text depth=0.25ex}}
\tikzset{rowcolsep/.style={column sep=.2cm, row sep=.1cm}}
\tikzset{
  crossed/.style={
    decoration={markings,mark=at position .5 with {\arrow{|}}},
    postaction={decorate},
    shorten >=0.4pt}}
\tikzset{every picture/.style={baseline=-.65ex} }
\newcommand{\dEd}{{
\begin{tikzpicture}[baseline=-.65ex,scale=.5]
 \node[nil] (a) at (0,0) {};
 \node[int] (b) at (1,0) {};
 \node[nil] (c) at (2,0) {};
 \draw (a) edge[latex-] (b);
 \draw (b) edge[-latex] (c);
\end{tikzpicture}}}
\begin{document}
\author{Alexey Kalugin} 
\address{Max Planck Institut für Mathematik in den Naturwissenschaften, Inselstraße 22, 04103 Leipzig, Germany}
\email{alexey.kalugin@mis.mpg.de}
\title{Oriented Getzler-Kapranov complexes and framed curves}

\maketitle
\begin{abstract} In the present paper we introduce and study oriented Getzler-Kapranov complexes. These complexes are generalisations of S. Merkulov's oriented graph complex. We investigate their relation to the cohomology of moduli spaces of complex and tropical curves, ribbon graph complexes and motivic structures in string topology.  

\end{abstract} 

\section{Introduction} 

\subsection{Introduction} In his groundbreaking work on quantum groups V. Drinfeld \cite{Drin} introduced a notion of a \textit{Lie bialgebra} as a classical limit of a quantum group. According to \textit{ibid.} a Lie bialgebra $\mathfrak g$ is a Lie algebra $\mathfrak g$ endowed with a structure of a Lie coalgebra, such that the cobracket:
$$
\delta\colon \mathfrak g\longrightarrow \mathfrak g\otimes \mathfrak g
$$
is a $1$-cocycle with respect to an adjoint action of $\mathfrak g$ on $\mathfrak g\otimes \mathfrak g.$ The corresponding "operadic notion" which governs a $(c,d)$-Lie bialgebra structure on a graded vector spaces  is a properad $\textsf {LieB}_{c,d}.$ Algebras over this properad have a bracket of degree $1-d$ and a cobracket of degree $1-c.$\footnote{One gets the original definition of V. Drinfeld for $d=c=1.$} According to \cite{MW2} the deformation complex of the properad $\textsf{LieB}_{d,d}$ is given by the \textit{oriented graph complex} $\textsf{OGC}_{2d+1}^{\hdot}$ i.e. chains are directed graphs without cycles and a differential which splits a vertex. The cohomology of $\textsf{OGC}_{2d+1}^{\hdot}$ has a rich structure in particular $H^{\hdot}(\textsf{OGC}_{2d+1})$ contains the famous Grothendieck-Teichmüller Lie algebra \cite{Will2}. There exists an important class of \textit{involutive} Lie bialgebras (a composition of the cobracket and bracket of a Lie bialgebra is zero), with the corresponding properad denoted by $\textsf{LieB}_{d,d}^{\diamond}.$ Involutive Lie bialgebras appear in various subjects: moduli stacks of curves, string topology, symplectic field theory \cite{MW} \cite{Tur} \cite{Sc} \cite{CFL}. 
\par\medskip 
In the seminal paper \cite{MW} S. Merkulov and T. Willwacher found a generalisation of the properad $\textsf{LieB}_{d,d}^{\diamond},$ According to \textit{ibid.} a \textit{properad of ribbon graphs} $\textsf {RGra}_d$ is a properad with $(n,m)$-operations given by vector space generated by ribbon ribbon graphs (up to a choice of the orientation) with $[m]$-labelled vertices and $[n]$-labelled boundaries. A composition rule is defined by gluing boundaries to vertices. The properad $\textsf {RGra}_d$ is equipped with a \textit{Chas-Sullivan morphism}:
$$
\diamond\colon \textsf {LieB}_{d,d}^{\diamond}\longrightarrow \textsf {RGra}_d,
$$
defined by the following rule:
$$
\diamond \colon [\,\,,\,\,]\longmapsto
\xy
 (0,0)*{\bullet}="a",
(5,0)*{\bullet}="b",
\ar @{-} "a";"b" <0pt>
\endxy \ \quad \diamond \colon \delta \longmapsto \ \xy
(0,-2)*{\bullet}="A";
(0,-2)*{\bullet}="B";
"A"; "B" **\crv{(6,6) & (-6,6)};
\endxy.
$$
\begin{figure}
\begin{tikzpicture}[baseline=-.65ex]
  \node[int] (v0) at (0:1) {};
\node[int] (v2) at (120:1) {};
\node[int] (v4) at (240:1) {};
\draw (v0) edge (v2) edge (v4) (v2) edge (v4);
\draw (v0) to[ out=180, in=90, looseness=30] (v0);
\draw (v2) to[ out=-70, in=45, looseness=30] (v2);
\draw (v4) to[ out=70, in=-45, looseness=30] (v4);
 \end{tikzpicture}
,\qquad 
\begin{tikzpicture}[baseline=-.65ex]
  \node[int] (v0) at (-45:1) {};
  \node[int] (v1) at (45:1) {};
 \draw (v0) edge (v1) edge[bend left] (v1) edge[bend right] (v1);
 \end{tikzpicture}

  \caption{\label{fig:loops} The left graph has genus one and belongs to $\textsf{RGra}_d(3,3)$ the right one has genus zero and belongs to $\textsf{RGra}_d(3,2).$}
 \end{figure}
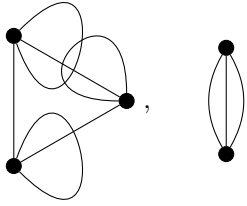
The Chas-Sullivan morphism has a very rich deformation theory \cite{MW}. Consider the composition $*\colon \textsf {LieB}_{d,d}\rightarrow \textsf {LieB}^{\diamond}_{d,d}\overset{\diamond}{\rightarrow}\textsf {RGra}_d,$ according to \textit{ibid} the corresponding deformation complex $\textsf {RGC}_d^{\hdot}(\delta+\Delta_1)$ is given by collections of ribbon graphs $\textsf {RGC}_d^{\hdot}$ (with non-labelled boundaries and vertices) equipped with a differential:
$$
\delta+\Delta_1\colon \textsf{RGC}_d^{\hdot}\longrightarrow \textsf {RGC}_d^{\hdot},
$$
where $\delta$ is a standard "splitting a vertex" differential \cite{Penn1} \cite{Kon1} and $\Delta_1$ is the so-called \textit{Bridgeland differential} \cite{Brid}. The cohomology of the latter complex is given by the totality of the shifted compactly supported cohomology of $\mathcal M_g$ \cite{AK4}.  Applying the functoriality of deformation complexes, the Chas-Sullivan morphism induces a map from the oriented graph complex $\textsf{OGC}_{2d+1}^{\hdot}$ to the ribbon graph complex $\textsf {RGC}_d^{\hdot}(\delta+\Delta_1)$. In \cite{MW} (Page $4$) the following was proposed:
\begin{Conj}[S. Merkulov and T. Willwacher '15]\label{MW} The canonical morphism:
$$H^{\hdot}(\textsf {OGC}_{2d+1})\longrightarrow H^{\hdot+1}(\textsf{RGC}_d(\delta+\Delta_1))$$
is injective.
\end{Conj}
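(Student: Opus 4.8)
The plan is to factor the canonical morphism through the oriented Getzler–Kapranov complex $\textsf{OGK}_d$ introduced above, writing it on cohomology as
$$
H^{\hdot}(\textsf{OGC}_{2d+1})\ \xrightarrow{\ \iota_*\ }\ H^{\hdot}(\textsf{OGK}_d)\ \xrightarrow{\ p_*\ }\ H^{\hdot+1}(\textsf{RGC}_d(\delta+\Delta_1)),
$$
where $\iota$ realises an oriented graph as an oriented ribbon graph whose vertices carry no handles, and $p$ is induced by the Chas–Sullivan morphism $\diamond$ together with the Bridgeland differential $\Delta_1$. It then suffices to prove that $\iota_*$ and $p_*$ are each injective; the former is essentially formal, the latter is the geometric core of the argument.

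For $\iota_*$ I would filter $\textsf{OGK}_d$ by the genus of the underlying ribbon graph. The vertex-splitting differential $\delta$ preserves this filtration while $\Delta_1$ strictly raises it, so on the associated graded only $\delta$ acts and the genus-zero layer is exactly $\textsf{OGC}_{2d+1}$; comparing the two resulting spectral sequences identifies $\iota_*$ with the inclusion of $H^{\hdot}(\textsf{OGC}_{2d+1})$ as the lowest-genus summand of $H^{\hdot}(\textsf{OGK}_d)$, hence as an injection. This is the mechanism by which the graph complex sits inside larger deformation complexes in the work of Willwacher and of Merkulov–Willwacher; the only point to check is that the combinatorics of $\diamond$ prevent $\delta$ from interacting with the genus grading, which is immediate. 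In fact I expect $\iota_*$ to be an isomorphism once the loop classes are matched on the two sides, which would reduce the whole statement to $p_*$.

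For $p_*$ I would first prove, as the main geometric result of the paper, that $H^{\hdot}(\textsf{OGK}_d)$ is the totality over $g$ of the shifted compactly supported cohomology of the moduli spaces $\mathcal{M}_g^{\mathrm{fr}}$ of \emph{framed} (tropical, and then complex) curves, the framing being precisely the acyclic orientation; under this identification and the identification $H^{\hdot+1}(\textsf{RGC}_d(\delta+\Delta_1))\cong\bigoplus_g H_c^{\hdot}(\mathcal{M}_g)$ (suitably shifted) of \cite{AK4}, the map $p_*$ becomes the comparison along the framing-forgetful map $\pi\colon\mathcal{M}_g^{\mathrm{fr}}\to\mathcal{M}_g$. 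Injectivity of $p_*$ then says that no cohomology class of a framed curve is killed under $\pi$. I would attack this either by exhibiting, over the tropical boundary strata, a compatible family of framings splitting $\pi$ up to homotopy, so that the splitting descends to cohomology, or, Hodge-theoretically, by identifying the image with the direct summand cut out by the monodromy of the space of framings. This is the step I expect to be the main obstacle: the space of acyclic orientations of a ribbon graph is disconnected and interacts intricately with $\Delta_1$, so controlling the Leray spectral sequence of $\pi$ — and in particular ruling out that a framed class becomes exact after forgetting the framing — will require genuine work rather than formal nonsense.

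Finally, should the general splitting statement prove too strong, a weaker route sufficient for the conjecture is to detect the image of $H^{\hdot}(\textsf{OGC}_{2d+1})$ directly: its classes — in particular the Grothendieck–Teichmüller classes of \cite{Will2} and the loop classes — map under $\diamond$ to explicit ribbon-graph cocycles whose nontriviality in $\bigoplus_g H_c^{\hdot}(\mathcal{M}_g)$ can be tested against known nonvanishing cohomology, for instance via Euler characteristics of $\mathcal{M}_g$ or tautological classes; since all the comparison maps respect the natural brackets, injectivity then propagates from a generating set to all of $H^{\hdot}(\textsf{OGC}_{2d+1})$.
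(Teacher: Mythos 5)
Your high-level skeleton --- factor the Merkulov--Willwacher map through the oriented Getzler--Kapranov complex and prove each leg injective --- does match the architecture of the paper, but both legs as you set them up have problems, and the second one hides the actual open difficulty. First, note that the paper does \emph{not} prove this statement unconditionally: its Theorem \ref{MW4} only shows that Conjecture \ref{Conj1} (existence of a properad morphism, or rather a roof, $\Omega(\overline{\textsf N}^{fr})\to\textsf{RGra}_0$ factoring through the gravity properad $\textsf{GRav}_0$ and compatible with the TQFT morphism $\star$ and with $*$) implies the conjecture, and only the genus-zero case of that is verified. Your map $p$ ``induced by the Chas--Sullivan morphism together with the Bridgeland differential'' is exactly this conjectural morphism: nothing in the construction of $\textsf{OGK}$ hands you a map to $\textsf{RGC}_d(\delta+\Delta_1)$, because $\textsf{OGK}$ is built from ordinary oriented graphs whose vertices are decorated by cochains of moduli of framed nodal surfaces $\overline{\mathcal N}^{fr}$, not by ribbon graphs. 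Constructing $p$ compatibly at the properadic level is the missing ingredient, not a routine consequence of $\diamond$. Moreover your picture of $p_*$ as a framing-forgetful comparison $\mathcal M_g^{\mathrm{fr}}\to\mathcal M_g$ whose Leray spectral sequence must be controlled is not what the paper establishes: the \v{Z}ivkovi\'c quasi-isomorphism $\Psi$ shows $H^{\hdot}(\textsf{OGK}_{2d+1})\cong H^{\hdot}(\textsf{GK}_{2d})$, i.e.\ the orientation/framing data is homotopically invisible and both sides of your $p_*$ are already identified with the totality of $H_c^{\hdot}(\mathcal M_g)$ (the target via \cite{AK4}); the real question is whether the conjectural $p$ matches these identifications (the paper argues it is the Chan--Galatius--Payne map), not whether some fibration splits.

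Your argument for $\iota_*$ also rests on a structural misreading. Elements of $\textsf{OGK}_d$ have no ``underlying ribbon graph'' and hence no ribbon-graph genus to filter by; the differentials $\delta$ and $\Delta_1$ you invoke live on $\textsf{RGC}_d(\delta+\Delta_1)$, not on $\textsf{OGK}$. In the paper $\textsf{OGC}_{2d+1}$ sits inside $\textsf{OGK}_{2d+1}$ as the weight-zero part (all vertex decorations of genus zero and degree zero), and the injectivity on cohomology of the relevant comparison (Corollary \ref{imp}) is proved geometrically: the map is realised by gluing disks at inputs, it factors through the point-forgetting maps $\pi$ of Deligne--Mumford stacks, and $\pi^*$ is split injective by the projection formula $\pi_!(e(\pi)\wedge\pi^*\alpha)=\mathrm{const}\cdot\alpha$ with the Euler class $e(\pi)$. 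A genus filtration in which ``only $\delta$ acts on the associated graded'' does not reproduce this, and the claim that $\iota_*$ is an isomorphism is also not what one expects (weight zero is a proper summand). Your fallback of detecting Grothendieck--Teichm\"uller and loop classes individually is not a proof of injectivity of the full map and does not appear in the paper.
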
 
In the present paper we suggest a path to prove this conjecture based on the notion of an oriented Getzler-Kapranov complex. We explain a relation to the recent advances by M. Chan, S. Galatius and S. Payne \cite{CGP1} \cite{CGP2} and motivic structures in string topology studied by R. Hain \cite{Hain} \cite{Hain2}.

\subsection{Main results}  For a non empty finite set $S$ and an integer $d$ we introduce the so-called \textit{$S$-marked oriented Getzler-Kapranov complex} $\textsf H_S\textsf {OGK}_{2d+1}^{\hdot}.$ In order to do it we define the moduli spaces of oriented $S$-marked tropical curves $OM_{g,S}^{trop}$ as a \textit{directed} analog of the moduli space of $S$-marked tropical curves $M_{g,S}^{trop}$ \cite{BMV}. Recall that moduli spaces $M_{g,S}^{trop}$ are object where DG-modular (co)operads "live" naturally \cite{AK}. Analogously, moduli spaces $OM_{g,S}^{trop}$ are very close to DG-(co)properads. We consider a stable version $\overline{\mathcal N}^{fr}$ \cite{KonS} of G. Segal's properad which consists of stable nodal bordered surfaces with analytically parametrised boundaries. One can define a natural DG-combinatorial sheaf $\EuScript {ODM}_{g,S}$ on $OM_{g,S}^{trop},$ associated with $\overline{\mathcal N}^{fr}.$ These sheaves are analogous to the Deligne-Mumford sheaves from \cite{AK}. Hence we define the $S$-marked oriented Getzler-Kapranov complex by the rule:
$$
\textsf H_S \textsf {OGK}^{\hdot}_{2d+1}:=\prod_{g\geq 0\, 2g+|S|-2>0}^{\infty} \mathbf R^{\hdot-g(1+2d)-|S|}\Gamma_c(\mathcal {OM}_{g,S}^{trop},\EuScript {ODM}_{g,S})
$$
Denote by $\textsf H_S\textsf {GK}^{\hdot}_{2d}$ the $S$-marked Getzler-Kapranov complex \cite{AWZ} \cite{AK}. Our first result is:
\begin{Th}\label{ziv} For every integer $d$ and a finite non-empty set $S$ there is an explicit quasi-isomorphism of complexes (M. Živković's map):
$$
\Psi\colon \textsf H_S\textsf {OGK}_{2d+1}^{\hdot}\overset{\sim}{\longrightarrow} \textsf H_S\textsf {GK}_{2d}^{\hdot}
$$
\end{Th}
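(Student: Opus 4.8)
The plan is to realise $\Psi$ as the map induced on compactly supported hypercohomology by the \emph{orientation-forgetting} morphism $p\colon\mathcal{OM}_{g,S}^{trop}\to M_{g,S}^{trop}$, and then to prove it is a quasi-isomorphism by a stratum-local argument modelled on M. Živković's proof in the pure graph-complex case. Writing $\textsf{H}_S\textsf{GK}_{2d}$ in the analogous form $\prod_{g}\mathbf R\Gamma_c(M_{g,S}^{trop},\EuScript{DM}_{g,S})$ (with its natural shift, $\EuScript{DM}_{g,S}$ the Deligne-Mumford combinatorial sheaf), one observes that $p$ is finite and combinatorial: over the open stratum $\sigma_G\cong(\mathbb{R}_{>0})^{E(G)}/\mathrm{Aut}(G)$ indexed by a stable $S$-graph $G$ its fibre is the set $\mathrm{Ac}(G)$ of acyclic orientations of the internal edges of $G$, and contracting an edge of an acyclic orientation never creates a directed cycle, so $p$ respects the face maps of both cone complexes. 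Since $\EuScript{ODM}_{g,S}$ is modelled on $p^{\ast}\EuScript{DM}_{g,S}$ --- up to the comparison between framed and Deligne-Mumford vertex data and the orientation twist relating the sign conventions for indices $2d+1$ and $2d$ --- there is a natural trace morphism $\mathbf{R}p_{!}\,\EuScript{ODM}_{g,S}\to\EuScript{DM}_{g,S}$, shifted by $g$, and $\Psi$ on the genus-$g$ summand is $\mathbf R\Gamma_c$ of it. First I would fix the conventions for the orientation torsors (of the cones $\sigma_G$, of the edge sets, and of the edge directions) and check that $\Psi$ is a chain map; this is bookkeeping, but the sign discipline requires the comparison between framed and Deligne-Mumford data to be natural for edge contraction, which should already be built into the construction of the sheaves.

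Since $\mathbf R\Gamma_c(M_{g,S}^{trop},-)\circ\mathbf Rp_{!}=\mathbf R\Gamma_c(\mathcal{OM}_{g,S}^{trop},-)$, it suffices to prove that the shifted trace $\mathbf Rp_{!}\,\EuScript{ODM}_{g,S}\to\EuScript{DM}_{g,S}[g]$ is a quasi-isomorphism of DG-combinatorial sheaves --- a statement about stalks, hence about each stratum $\sigma_G$ separately. Both complexes carry an exhaustive filtration by the first Betti number $b_1$ of the stable graph: edge contraction is non-increasing on $b_1$, and within a fixed total genus the filtration is finite, so the associated spectral sequences converge and it is enough to establish the stratum-local isomorphism. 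Over $\sigma_G$ the stalk of $\mathbf Rp_{!}\,\EuScript{ODM}_{g,S}$ is the \emph{orientation complex} of $G$: the sum over $\omega\in\mathrm{Ac}(G)$ of the framed-surface vertex data attached to $(G,\omega)$, with differential induced by the contractible directions of the tropical moduli; one must identify its cohomology with that of the stalk $\EuScript{DM}_{G}$ of $\EuScript{DM}_{g,S}$ at $\sigma_G$, shifted by $g$.

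This identification is the core, and it is where Živković's argument is adapted. I would first enlarge the orientation complex to allow \emph{all} orientations of $E(G)$, not only the acyclic ones, and show that the "non-acyclic part" is contractible by Živković's explicit homotopy, built from an auxiliary total order on $V(G)$ --- a discrete Morse matching that reverses a distinguished edge of a canonically chosen directed cycle. On the resulting all-orientations complex one then runs the Koszul-type collapse that contracts, independently for each edge, the two-element set of its directions onto a point; this accounts for a shift by $b_1(G)$. Simultaneously the framed vertex coefficients reduce to Deligne-Mumford coefficients through the comparison between $\overline{\mathcal N}^{fr}$ and the Deligne-Mumford strata used to define $\EuScript{DM}$, which supplies the remaining shift $\sum_{v}g_v=g-b_1(G)$ carried by the genera of the vertices. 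Re-assembling the stratum-wise quasi-isomorphisms through the $b_1$-filtration, over all $g$, yields $\Psi$ as claimed; the passage from index $2d+1$ to $2d$ is exactly the per-loop part of this bookkeeping together with the framing circles of $\overline{\mathcal N}^{fr}$.

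The hard part will be this enlargement-and-collapse step in full generality. The contracting homotopy on orientations attached to a fixed vertex order is not $\mathrm{Aut}(G)$-equivariant, so it must either be averaged or replaced by a canonical acyclic matching; the obstruction to equivariance is precisely what produces Živković-type "loop classes" in the pure graph setting, and one has to check that for stable $S$-graphs --- with their valence constraints --- such classes either do not occur or are already visible on the $\textsf{GK}_{2d}$-side through the Deligne-Mumford coefficients, so that $\Psi$ is an isomorphism on cohomology and not merely a split injection. A second, more structural difficulty is that the coefficients are honestly varying DG-sheaves rather than constant ones, so the Koszul collapse must be performed functorially in the vertex data and compatibly with edge contraction; this is where the precise (co)homological comparison between stable framed nodal surfaces $\overline{\mathcal N}^{fr}$ and Deligne-Mumford spaces enters, and it is the principal input beyond the combinatorial Živković mechanism on which the proof rests.
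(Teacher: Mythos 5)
Your combinatorial core --- enlarging to all edge-orientations, contracting the non-acyclic part by Živković's vertex-order homotopy, then collapsing the direction torsor of each edge --- is exactly the mechanism the paper ultimately relies on (it delegates this step to \cite{AWZ} after filtering by the number of vertices and passing to the associated graded), and your worries about $\mathrm{Aut}(G)$-equivariance and about performing the collapse functorially in the DG-coefficients are the right ones to have. But the foundation of your argument does not match the object in the theorem. There is no orientation-forgetting map $p\colon\mathcal{OM}_{g,S}^{trop}\to M_{g,S}^{trop}$: in this paper the cone attached to an oriented graph $G$ is $\sigma_G^{or}=\mathbb R_{\geq 0}^{V(G)}$, with coordinates indexed by \emph{vertices}, whereas $\sigma_G=\mathbb R_{\geq 0}^{E(G)}$ has coordinates indexed by edges; the dimensions differ, the fibre over an inner stratum is not the discrete set $\mathrm{Ac}(G)$, and the objects of $OJ_{g,S}$ (which contain two-output source vertices, exclude wheels, and obey different valence constraints) are not simply objects of $J_{g,S}$ equipped with an acyclic orientation. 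Consequently $\mathbf Rp_{!}$ and the trace morphism from which you build $\Psi$ are not defined, and your stalkwise computation would be computing the cohomology of a different space.

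What the paper actually does is replace your covering map by a correspondence $OJ_{g,S}\leftarrow TJ_{g,S}\rightarrow J_{g,S}$ through graphs equipped with a weighted spanning forest $\tau$: the functor to the oriented side \emph{changes the underlying graph} (edges of $\tau$ are directed toward the marking, every non-forest edge is subdivided into a two-output wedge with a new source vertex), the induced map of cones $\mathbb R_{\geq 0}^{E(G)}\hookrightarrow\mathbb R_{\geq 0}^{V(G_\tau)}$ is a closed embedding rather than a projection, and $\Psi$ is the composite of two adjunction (wrong-way) maps through $\mathbf R\Gamma_c$ of this intermediate diagram of tropical curves with spanning forests. The comparison of coefficients is likewise not a formal ``$\EuScript{ODM}$ is modelled on $p^{*}\EuScript{DM}$'' statement but the capping/framing homotopy equivalence $Fr_\tau\colon\overline{\mathcal M}(G)\overset{\sim}{\longrightarrow}\overline{\mathcal N}(G_\tau)^{fr}$ of Lemma \ref{KL1} (after \cite{AV}); note in particular that this equivalence is degree-preserving, so it cannot supply the shift $\sum_v g_v$ you attribute to it --- all shifts come from the vertex-versus-edge indexing of the cones together with the $g(1-d)$ normalisation. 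If you wish to keep your route, you must first prove that the paper's vertex-cone model of $\mathcal{OM}_{g,S}^{trop}$ is equivalent to the edge-cone-with-acyclic-orientation model you are implicitly using; that equivalence is essentially the content of the spanning-forest correspondence, so it cannot be assumed at the outset.
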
 
Recall that in \cite{AK} a notion of the Getzler-Kapranov complex of weight $k$ was introduced $\textsf W_k\textsf H_S\textsf {GK}_{2d}^{\hdot}.$ These complexes are "decorated graph complexes" which compute the weight $k$-quotient of the compactly supported cohomology of moduli stacks of smooth and proper algebraic curves $\mathcal M_{g,S}$ \textit{ibid}. Analogously one can define a notion of the weight $k$ oriented Getzler-Kapranov complex $\textsf W_k\textsf H_S\textsf {OGK}_{d}^{\hdot}.$ The important property of the Živković morphism that it preserves weight quotients i.e. we have the following:
\begin{Cor}\label{ziv1} The Živković morphism preserves the weight $k$-complexes:
$$
\Psi\colon H^{\hdot}(\textsf W_k\textsf H_S\textsf {OGK}_{2d+1})\overset{\sim}{\longrightarrow} H^{\hdot}(\textsf W_k\textsf H_S\textsf {GK}_{2d})\cong \prod_{g\geq 0\, 2g+|S|-2>0}^{\infty}\mathrm {gr}^W_kH_c^{\hdot+4dg}(\mathcal M_{g,S},\mathbb Q) 
$$
\end{Cor}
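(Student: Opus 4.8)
The plan is to deduce Corollary \ref{ziv1} from Theorem \ref{ziv} by upgrading the quasi-isomorphism $\Psi$ to a morphism of \emph{weight-graded} complexes. Recall that the weight grading on $\textsf H_S\textsf{GK}^{\hdot}_{2d}$ is intrinsic to the decorations: a generator is an ordinary $S$-marked stable graph $\Gamma$ together with a decoration of its vertices by classes in the cohomology of the relevant pieces of $\overline{\mathcal N}^{fr}$, and its weight is the total cohomological weight of that decoration, the graph itself contributing nothing. Exactly the same recipe, applied to oriented $S$-marked tropical curves (directed graphs without oriented cycles) and to the sheaf $\EuScript{ODM}_{g,S}$, defines the weight grading on $\textsf H_S\textsf{OGK}^{\hdot}_{2d+1}$, which is the grading implicit in the notation $\textsf W_k\textsf H_S\textsf{OGK}^{\hdot}_{2d+1}$. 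The first step is to observe that the differentials on both complexes — the vertex-splitting differential together with the Bridgeland-type differential on the GK side, and their oriented analogues on the OGK side — are weight-homogeneous of degree $0$, because each acts on the decoration through a (co)multiplication-type map which is a morphism of mixed Hodge (motivic) structures and hence weight preserving. Consequently each of the two complexes splits as a product $\prod_k$ of its weight-$k$ subcomplexes $\textsf W_k$.

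It then remains to check that $\Psi$ respects this decomposition, i.e. that $\Psi$ is weight-homogeneous of degree $0$; granting this, $\Psi$ splits as $\prod_k\Psi_k$ and, being a quasi-isomorphism by Theorem \ref{ziv}, each $\Psi_k$ is automatically a quasi-isomorphism, which is precisely the displayed statement. The final identification of $H^{\hdot}(\textsf W_k\textsf H_S\textsf{GK}_{2d})$ with $\prod_{g}\mathrm{gr}^W_k H_c^{\hdot+4dg}(\mathcal M_{g,S},\mathbb Q)$ is then invoked as a black box from \cite{AK}. To verify weight-homogeneity of $\Psi$ one goes through the explicit description of Živković's map term by term: each term is a composite of local moves on a decorated directed graph — re-routing and contracting edges, summing over admissible acyclic orientations, and matching the $(2d+1)$-orientation line against the $(2d)$-orientation line — and every one of these moves acts as the identity on the decoration and only rearranges the underlying graph together with its orientation/sign data. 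Since the weight lives solely on the decoration, each term, and hence $\Psi$, is weight-homogeneous of degree $0$. One must be slightly careful with the terms of $\Psi$ that alter the loop order of a genus-$g$ piece and with the per-genus degree shifts $g(1+2d)+|S|$ on the OGK side versus the shift producing the $+4dg$ twist on the GK side: here one checks that these shifts are purely cohomological-degree bookkeeping, carry no weight, and are compatible summand by summand in $g$, so that the weight-$k$ part of genus $g$ on one side maps to the weight-$k$ part of genus $g$ on the other.

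The main obstacle is exactly this last verification. While it is intuitively evident that a combinatorially defined map such as $\Psi$ cannot change the weight of a Hodge-theoretic decoration, making it precise requires having the explicit formula for $\Psi$ from the proof of Theorem \ref{ziv} in hand and checking that the auxiliary operations it performs — in particular any operation that transports a decoration along a clutching or forgetful morphism of moduli spaces — are morphisms of mixed Hodge structures of weight-degree $0$. Equivalently, one can phrase the argument entirely at the level of the combinatorial sheaves: $\Psi$ is induced by a weight-filtered map $\EuScript{ODM}_{g,S}\to\EuScript{DM}_{g,S}$ of complexes of sheaves on a common refinement of the relevant tropical moduli spaces, and it suffices to know that this sheaf map is strictly compatible with the weight filtrations, which follows from the functoriality of the mixed Hodge structure on $\overline{\mathcal N}^{fr}$ used to build both sheaves. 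Once strictness of $\Psi$ for the weight filtration is established, Corollary \ref{ziv1} follows formally, with no input beyond Theorem \ref{ziv} and \cite{AK}.
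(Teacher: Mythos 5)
Your two-step strategy --- first show that $\Psi$ is compatible with the weight structure, then deduce the weight-$k$ quasi-isomorphism and quote \cite{AK} for the identification with $\mathrm{gr}^W_kH_c^{\bullet}(\mathcal M_{g,S},\mathbb Q)$ --- is the same as the paper's, and your reason for weight-compatibility (that $\Psi$ only rearranges the underlying graph and acts on the decorations through pullbacks that respect the weight) is essentially what the paper records as a one-line Lemma. The gap is in the second step. In the paper the weight-$k$ object is \emph{not} a direct factor of the full complex: $\EuScript W_k\EuScript{ODM}_{g,S}$ is a canonical truncation of a complex of cochain-level sheaves and $\mathrm {Gr}^{\EuScript W}_k$ is defined as a cone, so $\textsf W_k\textsf H_S\textsf {OGK}^{\bullet}$ is a subquotient rather than a summand. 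Your deduction ``$\Psi=\prod_k\Psi_k$ is a quasi-isomorphism, hence each $\Psi_k$ is'' therefore does not apply as stated: a filtered quasi-isomorphism need not induce quasi-isomorphisms on associated graded pieces (an acyclic two-term complex, filtered by the subcomplex sitting in top degree, maps quasi-isomorphically to $0$ while both graded pieces are nonzero). The genuine splitting you posit would require replacing cochains by cohomology, i.e.\ invoking formality of $\overline{\mathcal N}^{fr}$, and then checking that $\Psi$ is block-diagonal for the chosen formality equivalences --- neither of which you do; your closing appeal to ``strictness'' points in the right direction but is still not the needed input.

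What closes the gap is to run the proof of Theorem \ref{ziv} weight by weight instead of quoting its conclusion: the key sheaf map $\mathrm {Fr}\colon \rho^*\EuScript {ODM}_{g,S}\to\pi^*\EuScript {DM}_{g,S}$ is a stratumwise quasi-isomorphism (Lemma \ref{KL1}), canonical truncation is functorial and preserves quasi-isomorphisms, so $\mathrm{Fr}$ induces quasi-isomorphisms on each $\EuScript W_k$ and hence on each cone $\mathrm {Gr}^{\EuScript W}_k$; the remaining (adjunction) arrows in the definition of $\Psi$ do not see the internal degree of the coefficients, so the argument for Theorem \ref{ziv} applies verbatim with $\mathrm {Gr}^{\EuScript W}_k$-coefficients. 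With that substitution your proof coincides with the paper's.
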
 
Denote by $\textsf H_S\textsf {OGC}_{2d+1}$ the $S$-marked oriented graph complex \cite{AWZ}. This complex naturally maps to $\textsf W_0\textsf H_S\textsf {OGK}_{2d+1}^{\hdot}.$ We recover one of the main results of \textit{ibid.} (Theorem $2$):
\begin{Cor}\label{ziv2} The following diagram commutes: 
\begin{equation}
\begin{diagram}[height=2.3em,width=2.3em]
 \textsf W_0\textsf H_S\textsf {OGK}_{2d+1}^{\hdot} & &  \rTo^{\Psi}_{\sim} &  & \textsf W_0 \textsf H_S\textsf {GK}_{2d}^{\hdot}   &  \\
\uTo_{\sim}^{} & & &  & \uTo_ {\sim}^{}  && \\
\textsf H_S\textsf {OGC}_{2d+1}^{\hdot} & &  \rTo^{\sim}_{} &  &  \textsf H_S\textsf {GC}_{2d}^{\hdot}\\
\end{diagram}
\end{equation}
\end{Cor}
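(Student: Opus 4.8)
The plan is to establish commutativity of the square already at the level of complexes --- up to an explicit homotopy if one is needed --- and then to pass to cohomology. Once commutativity is known, the fact that both vertical arrows and the top arrow are quasi-isomorphisms (the verticals by the weight-$0$ case of the comparison of \cite{AK} and its oriented analogue, the top one by Corollary \ref{ziv1}) forces the bottom arrow to be a quasi-isomorphism as well; this is the sense in which Theorem $2$ of \cite{AWZ} is recovered.

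First I would pin down the two vertical maps. Both are the evident inclusions: the $S$-marked graph complex $\textsf H_S\textsf {GC}_{2d}^{\hdot}$ (resp.\ the $S$-marked oriented graph complex $\textsf H_S\textsf {OGC}_{2d+1}^{\hdot}$) is the subcomplex of $\textsf W_0\textsf H_S\textsf {GK}_{2d}^{\hdot}$ (resp.\ of $\textsf W_0\textsf H_S\textsf {OGK}_{2d+1}^{\hdot}$) spanned by those decorated (oriented) tropical curves all of whose vertices have genus $0$ and carry the trivial decoration, namely the canonical generator of $\mathrm{gr}^W_0$ of the corresponding stalk of the Deligne--Mumford sheaf of \cite{AK} (resp.\ of $\EuScript{ODM}_{g,S}$). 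That this inclusion is a quasi-isomorphism is the content of \cite{AK} in the non-oriented case and follows from the construction of $\EuScript{ODM}_{g,S}$ as a directed analogue in the oriented case.

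Next I would restrict the definition of $\Psi$ from the proof of Theorem \ref{ziv} to the weight-$0$ strata. Since $\Psi$ is filtered for the weight filtration --- this is precisely Corollary \ref{ziv1} --- it induces a chain map on the weight-$0$ subquotients, which I would then identify explicitly: the combinatorial recipe defining $\Psi$, evaluated on the constant sheaves obtained by restricting $\EuScript{ODM}_{g,S}$ and its non-oriented analogue to the weight-$0$ loci of $\mathcal {OM}_{g,S}^{trop}$ and $\mathcal M_{g,S}^{trop}$, reduces on trivially decorated genus-$0$-vertex graphs to the purely combinatorial comparison map of \cite{AWZ} between the $S$-marked oriented graph complex and the ordinary one --- this is the sense in which $\Psi$ generalises M.\ Živković's construction. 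Under the identifications of the previous paragraph this is exactly the bottom arrow of the diagram, so the square commutes at the chain level, hence on cohomology.

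I expect this last matching to be the main obstacle: one has to check that the globally defined $\Psi$ --- built out of the stable Segal properad $\overline{\mathcal N}^{fr}$ and sheaf-cohomology on $\mathcal {OM}_{g,S}^{trop}$ --- reduces \emph{term by term}, with the correct degree shifts and signs, to the combinatorial Živković map of \cite{AWZ} on the weight-$0$ stratum, and in particular that it does not create vertices of positive genus there. I would handle this by rigidity rather than by a head-on comparison of formulas: in each fixed genus the complexes involved are finite and concentrated in few degrees, the differentials (splitting a vertex and its oriented variant) are highly constrained, and two chain maps that agree after the inclusions of the second paragraph are forced to coincide up to homotopy. The remaining genus-by-genus bookkeeping of the shifts $g(1+2d)+|S|$ against $4dg$ is already dictated by Theorem \ref{ziv} and Corollary \ref{ziv1}, so it introduces no new constraints.
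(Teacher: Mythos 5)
Your overall strategy coincides with the paper's: the paper's entire proof of this corollary is the one-line assertion that the claim "trivially follows from the definition of the Živković morphism", i.e.\ exactly the direct identification you describe in your second paragraph --- restrict the correspondence $TJ_{g,S}\to J_{g,S}$, $TJ_{g,S}\to OJ_{g,S}$ and the sheaf-level maps defining $\Psi$ to the locus where every vertex has weight zero and carries the canonical degree-zero decoration, and observe that the resulting adjunction/pushforward over spanning forests is Živković's combinatorial comparison map. Your identification of the two vertical arrows as the extensions by zero $j_!$ from the all-weight-zero locus, quasi-isomorphisms by Proposition \ref{alt1} and its non-oriented counterpart, also matches the paper. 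Your remark that commutativity plus the three quasi-isomorphisms forces the bottom arrow to be one is how the corollary recovers Theorem $2$ of \cite{AWZ}.

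The gap is in your third paragraph. The ``rigidity'' argument you offer as a substitute for the term-by-term comparison does not work. The two chain maps whose agreement is at stake are precisely the two composites around the square, $\Psi\circ j_!$ and $j_!\circ(\text{AWZ map})$, so the assertion that ``two chain maps that agree after the inclusions of the second paragraph are forced to coincide up to homotopy'' is circular: their agreement after the inclusions \emph{is} the statement to be proved, not a hypothesis you may feed into a uniqueness lemma. Nor is there any rigidity to exploit: in a fixed genus the $S$-marked (oriented) graph complexes are finite-dimensional but spread over many degrees, with large and largely unknown cohomology, and nothing abstract prevents two chain maps between them (either composite, or a sign-twisted variant of one of them) from failing to be homotopic. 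The only available proof is the direct one you yourself sketch in the second paragraph: evaluate $\Psi$ on a weight-zero generator $(G,or)$, trace it through the diagram defining $\Psi$ (pullback to $\mathcal{TM}_{g,S}^{trop}$, the identification $\mathrm{Fr}$, and the left Kan extension along $p$), and check with signs and degree shifts that the output is the combinatorial map of \cite{AWZ}. That check is routine precisely because on the weight-zero locus the sheaves are constant, but it cannot be replaced by an abstract homotopy-uniqueness argument.
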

Where  $\textsf H_S\textsf {GC}_{2d}^{\hdot}$ is a $S$-marked graph complex \cite{CGP1}. Applying known results about the cohomology of the Getzler-Kapranov complexes from Theorem \ref{ziv} one computes the cohomology of the corresponding oriented graph complexes \cite{BFP} \cite{PW}. 
\par\medskip 
Our next result relates the $S$-marked oriented Getzler-Kapranov complex to the certain deformation complex. Denote by $\overline{\textsf N}^{fr}$ the DG-coproperad defined as rational cochains of the properad $\overline{\mathcal N}^{fr}$ of stable bordered surfaces with analytically parametrised boundaries and by $\textsf {Frob}$ the properad of Frobenius algebras. We have a natural "TQFT-morphism":
$$
\star\colon \textsf {Frob}^*\longrightarrow \overline{\textsf N}^{fr}
$$
Denote by $\textsf {AC}$ the properad from \cite{AWZ} this properad is naturally equipped with a morphisma $\textsf {Frob}^{\diamond} \rightarrow \textsf {AC}\rightarrow \textsf {Frob}^{\diamond},$ where $\textsf {Frob}^{\diamond}$ is a properad of involutive Frobenius algebras. Denote by $\textsf B_g\textsf {HH}_S\textsf {OGK}^{\hdot}$ the part of the deformation complex $\mathrm {Def}(\Omega(\textsf {AC}^*) \longrightarrow  \Omega(\overline{\textsf N}^{fr})),$ which consists $S$-labelled decorated graphs of genus $g.$ Following \textit{ibid.} by $\textsf 
B_g\textsf {HH}_S\textsf {OGC}^{\hdot}$ the part of the deformation complex $\mathrm {Def}(\Omega(\textsf {AC}^*) \longrightarrow  \textsf {LieB}_0),$ which consists $S$-labelled graphs of genus $g.$ The cohomology of this complex can be identified with the cohomology of oriented graph complex $\textsf B_g\textsf {H}_S\textsf {OGC}^{\hdot}$ (Proposition $9$ \cite{AWZ}).
\begin{Th}\label{def1} For every $g\geq 0$ and non empty finite set $S$ such that $2g+|S|-2>0$ there is a canonical morphism:
$$
\textsf B_g\textsf H_S\textsf {OGC}_{d}^{\hdot}{\longrightarrow}\textsf B_g\textsf H\textsf H_S\textsf {OGK}_{d}^{\hdot},
 $$
which induces the injection in the cohomology, such that the following diagram commutes:
\begin{equation}
\begin{diagram}[height=2.3em,width=2.3em]
 H^{\hdot}(\textsf B_g\textsf H_S\textsf {OGC}) & &  \rTo^{\sim} &  &  H^{\hdot}(\textsf B_g\textsf H\textsf H_S\textsf {OGC}) &  \\
\dTo_{}^{} & & &  & \dTo_ {}^{}  && \\
 H^{\hdot}(\textsf B_g\textsf H_S\textsf {OGK}_{0}^{\hdot}) & &  \rInto^{}_{} &  & H^{\hdot}(\textsf B_g\textsf H\textsf H_S\textsf {OGK})
 \\
\end{diagram}
\end{equation}
\end{Th}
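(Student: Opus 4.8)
We outline the plan, which realizes the ``path'' announced in the introduction: the statement is the genus-$g$, $S$-marked refinement of Conjecture \ref{MW}, and the strategy is to factor the canonical morphism through the Živković comparison of Theorem \ref{ziv} and then extract injectivity from the weight filtration on $H_c^{\bullet}(\mathcal M_{g,S})$. First one builds the square. The top arrow is the identification of $H^{\bullet}(\textsf B_g\textsf H_S\textsf{OGC})$ with $H^{\bullet}(\textsf B_g\textsf H\textsf H_S\textsf{OGC})$ from Proposition $9$ of \cite{AWZ}. The left arrow is the genus-$g$ component of the canonical map from the oriented graph complex to the weight-$0$ oriented Getzler--Kapranov complex, which is a quasi-isomorphism (the genus-graded refinement of Corollary \ref{ziv2}, i.e.\ Theorem $2$ of \cite{AWZ}). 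The right arrow is produced by functoriality of the deformation complex $\mathrm{Def}(\Omega(\textsf{AC}^{*})\to P)$ in the target $P$, applied to the ``Chas--Sullivan type'' properad morphism $\textsf{LieB}_0\to\Omega(\overline{\textsf N}^{fr})$ extracted from the TQFT morphism $\star$ together with $\textsf{Frob}^{\diamond}\to\textsf{AC}\to\textsf{Frob}^{\diamond}$; the bottom arrow is then the induced map. Commutativity of the square is an unwinding: all four arrows come from functoriality applied to compatible (co)properad morphisms and from the naturality of $\Psi$, and the point to check is that the Chas--Sullivan morphism carries the lowest (genus-zero) layer of $\textsf{LieB}$ into the lowest-weight layer of $\Omega(\overline{\textsf N}^{fr})$, so that the two copies of the weight-$0$ subcomplex correspond. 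I would carry this out at the chain level and produce the homotopy filling the square explicitly.

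The crux is the injectivity of the bottom arrow. Here I would first reduce to the non-oriented case by assembling the commutative cube whose four vertical faces are instances of the weight-refined Živković quasi-isomorphism $\Psi$ (Theorem \ref{ziv} and Corollary \ref{ziv1}); this replaces $\textsf B_g\textsf H_S\textsf{OGK}_0\to\textsf B_g\textsf H\textsf H_S\textsf{OGK}$ by the analogous map of non-oriented Getzler--Kapranov complexes. By the genus-$g$ instance of Theorem \ref{ziv} together with \cite{AK} (and the deformation-complex comparison of \cite{AK4}), the non-oriented deformation complex computes, up to the shift of the statement, the compactly supported cohomology $H_c^{\bullet}(\mathcal M_{g,S})$ endowed with its mixed Hodge structure, the filtration of this complex by the weight-$\leq k$ subcomplexes realizing the weight filtration $W_{\bullet}$ on $H_c^{\bullet}$; and by Corollary \ref{ziv1} the weight-$0$ subcomplex is a model for $\textsf B_g\textsf H_S\textsf{OGK}_0$ and computes $\mathrm{gr}^W_0 H_c^{\bullet}(\mathcal M_{g,S})$. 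The key input is then strictness: for a complex whose terms and differentials are morphisms of mixed Hodge structures, the long exact sequence attached to the subcomplex $W_{0}$ and its quotient degenerates — the connecting morphisms $H^{\bullet}(\text{quotient})\to H^{\bullet+1}(W_{0})$ go from weights $>0$ to weights $\leq0$, hence vanish — so $H^{\bullet}(W_{0})\to H^{\bullet}(\text{total})$ is injective with image $W_{0}H_c^{\bullet}(\mathcal M_{g,S})$. Since $\mathcal M_{g,S}$ is a smooth Deligne--Mumford stack one has $W_{-1}H_c^{\bullet}=0$, so $W_{0}H_c^{\bullet}=\mathrm{gr}^W_0 H_c^{\bullet}$ and the bottom arrow is the canonical inclusion $\mathrm{gr}^W_0 H_c^{\bullet}(\mathcal M_{g,S})\hookrightarrow H_c^{\bullet}(\mathcal M_{g,S})$, which is injective. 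Injectivity of the composite $\textsf B_g\textsf H_S\textsf{OGC}\to\textsf B_g\textsf H\textsf H_S\textsf{OGK}$ follows at once, since the top and left arrows are isomorphisms on cohomology.

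The main obstacle is the step that upgrades \cite{AK} and \cite{AK4} from the sheaf-theoretic Getzler--Kapranov complex to the \emph{deformation-complex} presentation $\textsf B_g\textsf H\textsf H_S\textsf{OGK}=\mathrm{Def}(\Omega(\textsf{AC}^{*})\to\Omega(\overline{\textsf N}^{fr}))$: one must exhibit a zig-zag of quasi-isomorphisms between this deformation complex and the honest oriented Getzler--Kapranov complex of the main definition, carrying the weight filtration of one to that of the other and matching the two copies of the weight-$0$ subcomplex, and one must check that on $\textsf B_g\textsf H\textsf H_S\textsf{OGK}$ the Bridgeland-type part of the differential strictly raises the weight while the vertex-splitting part preserves it, so that weight-$0$ genuinely is a subcomplex and carries the mixed Hodge structure supplied by the tropical / Deligne--Mumford description. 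The remaining compatibilities — naturality of $\Psi$ for these weight gradings and commutativity of the cube — are then bookkeeping, and the asserted injectivity becomes the formal consequence of strictness of morphisms of mixed Hodge structures spelled out above.
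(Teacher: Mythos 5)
Your proposal misidentifies which arrow carries the real content of the theorem, and as a result the injectivity argument you give proves a different statement. In the paper's diagram the bottom arrow is the map
$H^{\hdot}(\textsf B_g\textsf H_S\textsf {OGK})\rightarrow H^{\hdot}(\textsf B_g\textsf H\textsf H_S\textsf {OGK})$,
where the source is the \emph{full} oriented Getzler--Kapranov complex (the subscript $0$, resp.\ $1$, is the integer degree parameter $d$, not a weight truncation) and the target is the deformation complex $\mathrm{Def}(\Omega(\textsf{AC}^*)\to\Omega(\overline{\textsf N}^{fr}))$, whose decorated graphs carry extra \emph{incoming} hairs. You read this arrow as the inclusion of a weight-$0$ subcomplex into the total complex, and accordingly you prove injectivity by strictness of the weight filtration, i.e.\ that $\mathrm{gr}^W_0H_c^{\hdot}(\mathcal M_{g,S})=W_0H_c^{\hdot}(\mathcal M_{g,S})\hookrightarrow H_c^{\hdot}(\mathcal M_{g,S})$. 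That argument is pertinent to the \emph{left} vertical arrow of the square (the Chan--Galatius--Payne-type map from the oriented graph complex, which is a model for the weight-$0$ part, into the full $\textsf{OGK}$), and it is indeed how one eventually deduces Corollary \ref{imp}; but it says nothing about the bottom arrow, whose source already computes all of $H_c^{\hdot}(\mathcal M_{g,S})$.

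What the paper actually does, and what is missing from your proposal, is the following. The morphism \eqref{mor2} is constructed vertex-by-vertex by the map $\Pi$ that glues an analytically parametrised disk onto every input boundary of a framed surface and stabilises; under the homotopy equivalences $Fr_\tau$ of Lemma \ref{KL1} this factors through the forgetful morphisms $\pi_j\colon\overline{\mathcal M}_{g,I\sqcup J}\to\overline{\mathcal M}_{g,I\sqcup J\setminus j}$ of Deligne--Mumford stacks. Injectivity of the bottom arrow then reduces to split injectivity of $\pi^*$ on cohomology, which the paper obtains from the Gysin/projection-formula identity $\pi_{!}\bigl(e(\pi)\cup\pi^*(-)\bigr)=const\cdot\mathrm{id}$ with $const\neq 0$, $e(\pi)$ being the Euler class of the forgetful map. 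No mixed Hodge theory enters at this point. Your proposal contains no construction of the candidate chain map from $\textsf B_g\textsf H_S\textsf{OGK}$ to $\textsf B_g\textsf H\textsf H_S\textsf{OGK}$ (the ``induced map'' in your square is not specified at the level of the decorated graphs with incoming hairs) and no mechanism for showing that adjoining these extra inputs loses no cohomology; this is the essential gap.
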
 
Applying all our previous results we obtain the following:
\begin{Cor}\label{imp} For every $g\geq 0$ and non empty finite set $S$ such that $2g+|S|-2>0$ we have the injective morphism:
$$
H^{\hdot}(\textsf B_g\textsf H\textsf H_S\textsf {OGC})\longrightarrow H^{\hdot}(\textsf B_g\textsf H\textsf H_S\textsf {OGK})
$$
\end{Cor}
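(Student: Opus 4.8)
The plan is to obtain Corollary \ref{imp} as a purely formal consequence of Theorem \ref{def1}, by a short diagram chase; all the genuine work has already been done in Theorems \ref{ziv} and \ref{def1} and in Corollaries \ref{ziv1}--\ref{ziv2}. Concretely, I would first fix $g$ and $S$ with $2g+|S|-2>0$ and name the four maps in the commutative square of Theorem \ref{def1}: the top horizontal map $\alpha\colon H^{\hdot}(\textsf B_g\textsf H_S\textsf {OGC})\to H^{\hdot}(\textsf B_g\textsf H\textsf H_S\textsf {OGC})$, which is an isomorphism; the left vertical map $\gamma\colon H^{\hdot}(\textsf B_g\textsf H_S\textsf {OGC})\to H^{\hdot}(\textsf B_g\textsf H_S\textsf {OGK}_0^{\hdot})$; the bottom horizontal map $\beta\colon H^{\hdot}(\textsf B_g\textsf H_S\textsf {OGK}_0^{\hdot})\to H^{\hdot}(\textsf B_g\textsf H\textsf H_S\textsf {OGK})$, which is injective; and the right vertical map $\rho\colon H^{\hdot}(\textsf B_g\textsf H\textsf H_S\textsf {OGC})\to H^{\hdot}(\textsf B_g\textsf H\textsf H_S\textsf {OGK})$. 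Theorem \ref{def1} moreover tells us that the diagonal composite $\rho\circ\alpha=\beta\circ\gamma$, which is exactly the canonical morphism $\textsf B_g\textsf H_S\textsf {OGC}\to\textsf B_g\textsf H\textsf H_S\textsf {OGK}$, induces an injection on cohomology.

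The diagram chase is then immediate. Since $\alpha$ is an isomorphism we may write $\rho=(\beta\circ\gamma)\circ\alpha^{-1}$; the right-hand side is the composite of the bijection $\alpha^{-1}$ with the injection $\beta\circ\gamma$, hence injective. As $\rho$ is precisely the morphism asserted in the statement of Corollary \ref{imp}, this finishes the argument. Equivalently, one can package the same chase by observing that the map of Corollary \ref{imp} is the map induced functorially on the deformation complex $\mathrm{Def}(\Omega(\textsf {AC}^*)\to\Omega(\overline{\textsf N}^{fr}))$ and its genus-$g$ piece by the TQFT-morphism $\star$ together with the morphisms $\textsf {Frob}^{\diamond}\to\textsf {AC}\to\textsf {Frob}^{\diamond}$, and that this induced map is the arrow $\rho$ sitting in the square of Theorem \ref{def1}; injectivity follows as above.

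The only point that is not formal — and the reason the corollary says ``applying all our previous results'' — is the identification of the right vertical arrow $\rho$ with the TQFT-induced morphism named in the statement, together with the verification that the square of Theorem \ref{def1} is indeed the square through which that morphism factors. This is where I expect the actual obstacle to lie, rather than in the final chase: it rests on the genus-$g$ refinement of M. Živković's quasi-isomorphism from Theorem \ref{ziv}, on its compatibility with the weight filtration from Corollary \ref{ziv1}, and on the commuting square of Corollary \ref{ziv2} identifying the weight-zero oriented Getzler--Kapranov complex with the oriented graph complex $\textsf B_g\textsf H_S\textsf {OGC}$. Assembling these compatibilities carefully — in particular checking that the decorated-graph models and the genus decomposition match on the nose under $\Psi$ — is the step that requires care; once it is in place, Corollary \ref{imp} follows by the two-line diagram chase above.
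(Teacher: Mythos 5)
Your argument is correct and is essentially the paper's own (the paper gives no explicit proof, presenting the corollary as an immediate formal consequence of Theorem \ref{def1}): you read off from the commutative square that the right vertical arrow equals the injective diagonal composite precomposed with the inverse of the isomorphism $\alpha$, hence is injective. Your closing observation that the substantive content lies in identifying that right vertical arrow with the TQFT-induced map on deformation complexes, rather than in the chase itself, also matches where the paper places the work (in the proof of the theorem of Section~4, not in the corollary).
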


\subsection{Merkulov-Willwacher's conjecture} Denote by  $\textsf {HoLieb}_0^{\diamond}$ the minimal resolution of $\textsf {LieB}_0^{\diamond},$ recall that $\Omega(\textsf {Frob})\cong \textsf {HoLieb}_0^{\diamond}$ \cite{CMW}, where $\Omega$ is a properadic cobar construction. By $*\colon \textsf {HoLieB}_0^{\diamond} \rightarrow \textsf {RGra}_0$ we denote the composite morphism from the properad $\textsf {LieB}_0^{\diamond}$ which send a cobracket to zero. We give the following:
\begin{Conj}\label{MW1}
There is a morphism (rather a "roof" of morphisms) of properads:
\begin{equation}\label{tq}
\Omega(\overline{\textsf N}^{fr})\longrightarrow \textsf {RGra}_0,
\end{equation}
such that the following diagram commutes: 
\begin{equation}
\begin{diagram}[height=2.3em,width=2.3em]
\Omega(\textsf {Frob}^*) & &  \rTo^{\star} &  &  \Omega(\overline{\textsf N}^{fr})   &  \\
\dTo_{\sim}^{} & & &  & \dTo_ {\mathrm{}}^{}  && \\
\textsf {HoLieB}_0^{\diamond} & &  \rTo^{*}_{} &  & \textsf {RGra}_0   \\
\end{diagram}
\end{equation}
\end{Conj}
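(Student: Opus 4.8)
The plan to establish Conjecture \ref{MW1} is to realise \eqref{tq} as a homotopy-coherent two-dimensional field theory valued in ribbon graphs. Since $\textsf{RGra}_0$ carries the zero differential, a morphism of DG-properads $\Omega(\overline{\textsf N}^{fr})\to \textsf{RGra}_0$ is, by the universal property of the cobar construction, the same datum as a twisting morphism $\alpha\colon \overline{\textsf N}^{fr}\to \textsf{RGra}_0$ in the properadic convolution algebra of Merkulov--Vallette: a degree $-1$ map of $\mathbb S$-bimodules satisfying the Maurer--Cartan equation $\alpha\circ d_{\overline{\textsf N}^{fr}}+\alpha\star\alpha=0$, where $\star$ is built from the cocomposition of $\overline{\textsf N}^{fr}$ and the gluing-a-boundary-to-a-vertex composition of $\textsf{RGra}_0$. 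In these terms the square of the conjecture says precisely that this field theory restricts along $\star$ to the one determined by $*$, and by freeness of $\Omega$ the verification reduces to an identity on the cogenerators coming from $\textsf{Frob}^*$ (co-multiplication, co-unit and their desuspensions), which $\star$ carries to a short list of low-complexity bordered surfaces.

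First I would build $\alpha$ from the \emph{ribbon graph spine}. A stable bordered surface with analytically parametrised boundary deformation retracts onto a ribbon graph, and --- combining the ideal cell decompositions of Harer, Penner and Bowditch--Epstein with the stable (compactified) ribbon graph formalism of Kontsevich--Soibelman and Costello --- the moduli space $\overline{\mathcal N}^{fr}_{g,S}$ carries a cell decomposition whose open cells are indexed by metric ribbon graphs together with the combinatorial type of the nodes. I would fix a corresponding cellular (piecewise-semialgebraic) cochain model $\textsf N^{\mathrm{comb}}$, a DG-coproperad quasi-isomorphic to $\overline{\textsf N}^{fr}$ whose cocomposition is the passage to codimension-one boundary strata, and set $\alpha$ to be the forgetful assignment: erase the metric, contract the edges lying over a node, and retain only the underlying ribbon graph with its boundary cycles. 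The two nontrivial checks are (i) that this intertwines gluing of boundaries of surfaces with the composition of $\textsf{RGra}_0$ --- a geometric gluing formula stating that the spine of a glued surface is obtained by attaching a boundary cycle of one spine to a vertex of the other --- and (ii) that $\alpha$ solves the Maurer--Cartan equation, for which one needs a bijection between the codimension-one boundary strata of $\overline{\mathcal N}^{fr}$ (a surface acquiring a separating or non-separating node) and the composable pairs contributing to $\alpha\star\alpha$, the residual edge-collapse strata cancelling against $\alpha\circ d_{\overline{\textsf N}^{fr}}$.

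Because the spine and the cell structure are compatible with the coproperad structure only up to coherent homotopy, there is no strict morphism out of $\overline{\textsf N}^{fr}$ itself, which is the source of the ``roof'' in the statement. I would therefore work in the model category of DG-properads and present \eqref{tq} by the zig-zag
$$
\Omega(\overline{\textsf N}^{fr})\ \xleftarrow{\ \sim\ }\ \Omega(\textsf N^{\mathrm{comb}})\ \xrightarrow{\ \alpha\ }\ \textsf{RGra}_0 ,
$$
the left arrow a quasi-isomorphism and hence invertible in the homotopy category. Commutativity with $\star$ is then checked on generators: the unit/co-unit is carried to a disk with two boundary circles, whose spine collapses to the single edge $\bullet\!-\!\bullet$, matching the image of the bracket under $*$, while the Frobenius co-operations are carried to genus-zero surfaces whose cellular models are contractible in the relevant bidegree, consistently with $*$ killing the cobracket; the higher generators of $\Omega(\textsf{Frob}^*)$ are then controlled by acyclicity of the same cochain complexes.

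The hard part will be checks (i) and (ii) for $\alpha$, and especially the Maurer--Cartan equation: it demands a precise geometric degeneration/gluing formula for spines of bordered surfaces matching, stratum by stratum, the cocomposition of $\overline{\textsf N}^{fr}$ against the composition of $\textsf{RGra}_0$, \emph{including} the nodal strata of the compactification, where the stable ribbon graph formalism and the analytic boundary parametrisations interact most subtly. Once this combinatorial gluing formula is established, the commutativity of the square, and hence --- via functoriality of deformation complexes, Theorem \ref{def1} and Corollary \ref{imp} --- the bridge back to Conjecture \ref{MW}, should follow formally.
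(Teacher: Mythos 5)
The statement you are proving is stated in the paper as a \emph{conjecture} (Conjecture \ref{MW1}/\ref{Conj1}): the paper offers no proof of it. What the paper does provide is (a) a verification in genus zero, by identifying the underlying cooperad of $\overline{\textsf N}^{fr}$ with $\textsf{Hycomm}$ and using the Koszul duality $\textsf{Comm}/\textsf{Lie}$ together with the identification of the gravity operad with twisted ribbon trees, and (b) a heuristic (Remark \ref{mgr}) that the sought morphism should be a roof $\Omega(\overline{\textsf N}^{fr}) \leftarrow \textsf{GRav}_0 \rightarrow \textsf{RGra}_0$ through the gravity properad, whose spaces of operations are chain models for Costello's moduli of nodal disks. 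So there is no ``paper's own proof'' to match yours against; any complete argument here would be new mathematics.

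Judged on its own terms, your proposal is a reasonable strategy and is broadly aligned with the paper's intended route (Costello's homotopy equivalence between bordered surfaces and combinatorial/ribbon-graph models), but it is not a proof: the two checks you yourself isolate --- the gluing formula (i) and the Maurer--Cartan equation (ii) --- are exactly the content of the conjecture, and you leave both open. One of them, (i), has a concrete obstruction you should confront. The properadic composition in $\overline{\textsf N}^{fr}$ glues an output boundary circle of one surface to an input boundary circle of another, so the glued circle lies in the \emph{interior} of the resulting surface; a spine (deformation retract onto a ribbon graph) of the glued surface has no reason to contain that circle or to decompose as ``attach a boundary cycle of one spine to a vertex of the other,'' which is what the $\textsf{RGra}_0$ composition requires. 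This mismatch between interior gluing of surfaces and the boundary-to-vertex composition of ribbon graphs is precisely why the paper expects a zig-zag through $\textsf{GRav}_0$ and Liu's moduli of surfaces with boundary singularities rather than a direct spine map on $\overline{\mathcal N}^{fr}$. A smaller but symptomatic error: the generator $[\,\,,\,\,]$ corresponds to a pair of pants in $\overline{\mathcal N}^{fr}_{0,2,1}$, not to ``a disk with two boundary circles,'' and its spine is a theta/figure-eight graph, not the single edge $\bullet\!-\!\bullet$; the matching with $*$ has to be made through the properadic (vertices-as-inputs) reading of $\textsf{RGra}_0$, not through thickenings of ribbon graphs. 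Until the degeneration/gluing formula is actually established, your argument establishes nothing beyond what the paper already conjectures.
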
 
Here we expect that morphism \eqref{tq} is induced by K. Costello's homotopy equivalence \cite{Cost} (see Conjecture \ref{Conj1} for a precise statement). Our next result is:

\begin{Th}\label{MW4} Conjecture \ref{MW1}, more precisely its refinement (Remark \ref{mgr}) implies the Merkulov-Willwacher conjecture (Conjecture \ref{MW}).

\end{Th}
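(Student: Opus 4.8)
The plan is to deduce the Merkulov--Willwacher injectivity statement (Conjecture \ref{MW}) from the commutative square of Conjecture \ref{MW1} by exploiting the functoriality of deformation complexes together with the already-established results of this paper, principally Theorem \ref{def1} and its Corollary \ref{imp}. The starting point is the observation that applying the deformation functor $\mathrm{Def}(-)$ to the square in Conjecture \ref{MW1} produces a square of deformation complexes, and that the left vertical map $\Omega(\textsf{Frob}^*)\xrightarrow{\sim}\textsf{HoLieB}_0^\diamond$ being a quasi-isomorphism of (co)properads induces a quasi-isomorphism on the corresponding deformation complexes. Hence the deformation complex of $*\colon \textsf{HoLieB}_0^\diamond\to\textsf{RGra}_0$ is identified with the deformation complex of the composite $\Omega(\textsf{Frob}^*)\to\Omega(\overline{\textsf N}^{fr})\to\textsf{RGra}_0$, and the latter factors through $\Omega(\overline{\textsf N}^{fr})$.

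Concretely, I would first recall that the deformation complex $\mathrm{Def}(\textsf{HoLieB}_0^\diamond\xrightarrow{*}\textsf{RGra}_0)$ computes, on each genus component, the ribbon graph complex $\textsf{RGC}_d^\bullet(\delta+\Delta_1)$ appearing in the introduction (with $d=0$; the general $d$ follows by the standard degree-shift/suspension argument relating $\textsf{RGra}_d$ and $\textsf{RGra}_0$, and likewise for $\textsf{OGC}_{2d+1}$ versus $\textsf{OGC}_1$). Next, the factorization through $\Omega(\overline{\textsf N}^{fr})$ gives, by functoriality of $\mathrm{Def}$, a factorization of the canonical map $\textsf{OGC}^\bullet\to\textsf{RGC}^\bullet(\delta+\Delta_1)$ through $\mathrm{Def}(\Omega(\textsf{AC}^*)\to\Omega(\overline{\textsf N}^{fr}))$ — this is exactly where the complex $\textsf B_g\textsf H\textsf H_S\textsf{OGK}^\bullet$ of Theorem \ref{def1} enters, since that complex was defined as the genus-$g$, $S$-labelled part of precisely this deformation complex. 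Summing (or taking the product) over $g$ and over the relevant marking data, the map $H^\bullet(\textsf{OGC})\to H^\bullet(\textsf{RGC}(\delta+\Delta_1))$ is then written as the composite $H^\bullet(\textsf{OGC})\to H^\bullet(\bigoplus_g\textsf B_g\textsf H\textsf H_S\textsf{OGK})\to H^\bullet(\textsf{RGC}(\delta+\Delta_1))$, whose first arrow is injective by Corollary \ref{imp}. It therefore suffices to check that the second arrow — induced by the roof of morphisms $\Omega(\overline{\textsf N}^{fr})\to\textsf{RGra}_0$ — is itself injective on cohomology; but this is the content of the refinement in Remark \ref{mgr}, which is exactly what Conjecture \ref{MW1} (in its refined form) supplies: the roof is built from Costello's homotopy equivalence and hence induces isomorphisms (or at least injections) on the graph-complex cohomologies in play.

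The key steps, in order, are: (1) translate Conjecture \ref{MW1} into a commuting square of deformation complexes via functoriality of $\mathrm{Def}$; (2) identify the left-hand deformation complex with $\textsf{OGC}$ and the bottom-right one with $\textsf{RGC}(\delta+\Delta_1)$, matching the normalisations used in the introduction (the genus decomposition and the marking set $S$ must be handled carefully, and one passes from the $S$-marked/genus-graded pieces to the full complexes by the appropriate (co)limit); (3) recognise the middle term $\mathrm{Def}(\Omega(\textsf{AC}^*)\to\Omega(\overline{\textsf N}^{fr}))$ as $\bigoplus_g\textsf B_g\textsf H\textsf H_S\textsf{OGK}^\bullet$ and invoke Corollary \ref{imp} for injectivity of $H^\bullet(\textsf{OGC})\to H^\bullet$ of the middle; (4) invoke the refined Conjecture \ref{MW1} / Remark \ref{mgr} to get that the map from the middle cohomology down to $H^\bullet(\textsf{RGC}(\delta+\Delta_1))$ is injective, and compose.

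I expect the main obstacle to be step (2), and more precisely the bookkeeping needed to make the deformation-complex identifications match on the nose, including the degree and genus shifts and the treatment of the marking set $S$ (the $\textsf{OGC}$ and $\textsf{RGC}$ of the introduction are the $S=\varnothing$-style complexes with unlabelled external structure, whereas Theorem \ref{def1} is phrased with a fixed nonempty $S$ and fixed genus $g$). Assembling the $S$-marked, genus-graded statement of Corollary \ref{imp} into a statement about the ungraded oriented and ribbon graph complexes requires a spectral-sequence or filtration argument on the genus/weight grading, and verifying that injectivity is preserved under passing to the product over $g$ is where the real care is needed — a non-split filtration could in principle destroy injectivity, so one must check that the relevant filtration on $\textsf{RGC}(\delta+\Delta_1)$ (by genus, equivalently by the weight grading of Corollary \ref{ziv1}) is compatible with the maps and is exhaustive and Hausdorff in the appropriate sense. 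The remaining steps are essentially formal consequences of functoriality of $\mathrm{Def}$ once the dictionary of step (2) is in place.
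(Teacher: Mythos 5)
Your overall skeleton does match the paper's: apply $\mathrm{Def}(\Omega(\textsf {AC}^*)\to -)$ to the square of Conjecture \ref{MW1}, identify the left-hand column with the oriented graph complex and the middle term $\mathrm {Def}(\Omega(\textsf {AC}^*)\to\Omega(\overline{\textsf N}^{fr}))$ with $\textsf B_g\textsf H\textsf H_S\textsf {OGK}^{\hdot}$, and feed in Corollary \ref{imp}. The genuine gap is in your step (4). You factor the Merkulov--Willwacher map as $H^{\hdot}(\textsf {OGC})\to H^{\hdot}(\mathrm {Def}(\Omega(\textsf {AC}^*)\to\Omega(\overline{\textsf N}^{fr})))\to H^{\hdot}(\textsf {RGC}(\delta+\Delta_1))$ and require the second arrow to be injective, attributing this to Remark \ref{mgr}. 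But the refinement supplies an injection in the \emph{opposite} direction: the roof is $\Omega(\overline{\textsf N}^{fr})\leftarrow\textsf {GRav}_0\rightarrow\textsf {RGra}_0$ with the left leg a quasi-isomorphism, and the injection claimed there is $H^{\hdot}(\mathrm {Def}(-\to\textsf {RGra}_0))\hookrightarrow H^{\hdot}(\mathrm {Def}(-\to\textsf {GRav}_0))$, \emph{not} injectivity of the functorial map $H^{\hdot}(\mathrm {Def}(-\to\textsf {GRav}_0))\to H^{\hdot}(\mathrm {Def}(-\to\textsf {RGra}_0))$ induced by $\textsf{can}$. The latter has no reason to be injective: the framed-surface/gravity side computes the full compactly supported cohomology of the moduli stacks $\mathcal M_{g,S}$, while the ribbon side sees much less, so this map should have a large kernel. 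Moreover, since only a roof is given, your ``second arrow from the middle down to $\textsf {RGC}$'' does not exist as a morphism of complexes at all --- only as a correspondence through $\textsf {GRav}_0$ --- so the two-injections-compose argument cannot even be set up as written.

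The repair is exactly the paper's argument, which runs the composition the other way around. Let $r\colon H^{\hdot}(\textsf {OGC})\to H^{\hdot}(\mathrm {Def}(-\to\textsf {RGra}_0))$ be the Merkulov--Willwacher map and $j\colon H^{\hdot}(\mathrm {Def}(-\to\textsf {RGra}_0))\to H^{\hdot}(\mathrm {Def}(-\to\textsf {GRav}_0))\cong H^{\hdot}(\mathrm {Def}(-\to\Omega(\overline{\textsf N}^{fr})))$ the map provided by the roof. Commutativity of the refined square identifies $j\circ r$ with the map of Corollary \ref{imp}, which is injective (and which the paper further identifies with the Chan--Galatius--Payne morphism, whence the appeal to \cite{AK4} and \cite{CGP1}); injectivity of $j\circ r$ then forces injectivity of $r$, with no need for injectivity of any map \emph{into} the ribbon graph complex. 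Your bookkeeping concerns about genus, the marking set $S$ and the product over $g$ are legitimate but secondary; the missing idea is this reversal of the direction in which the roof is exploited.
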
 
We apply Corollary \ref{imp} and obtain a proof of Conjecture $25$ from \cite{AWZ} (we show that the properadic morphism from the $S$-marked hairy graph complex to the ribbon graph complex (see \textit{ibid}.) coincides with the Chan-Galatius-Payne morphism \cite{CGP2}). Further applying the main results of \cite{AK4} and \cite{CGP1} we get a proof Conjecture \ref{MW}. 

\subsection{String topology} Here we explain the relation of our results to string topology:
\par\medskip 
In \cite{Vain} the logarithmic stack $\mathbfcal M_{g,k}$ which classifies formal curves with framings was introduced. These moduli stacks are equipped with gluing morphisms \textit{ibid.} and closely related to Kimura-Stasheff-Voronov spaces \cite{KSV}. The collection $\{\mathbfcal M_{g,k+p}\}$ can be made into a properad in the category of logarithmic stacks. The Kato-Nakayama analytification \cite{KaN} of the corresponding reduced log stack $\{\mathfrak m_{g,k+p}^{an}\}$ is equivalent to G. Segal's properad of $2$-bordisms $\{\mathcal N_{g,k,p}^{fr}\}.$ According to \cite{Cost1} $H_{\hdotc}(\mathcal N_{g,k,m}^{fr})$ naturally acts on the homology of a free loop space $LM$ of a simply connected and compact complex manifold $M,$ generalising the Chas-Sullivan string product in genus zero. We state the following:
\begin{Conj} 
\begin{enumerate}[(i)]
\par\medskip 
\item There exists a DG-properad $\textsf N^{fr}_{\mathrm {LMM}}$ in the category of log mixed motives.\footnote{See a discussion in \cite{Vain}.} 
\par\medskip 
\item The corresponding Hodge realisation $\textsf N^{fr}_{\mathrm {MHS}}$ in the category of mixed Hodge structures acts on the homology of $LM$ in the way compatible with the mixed Hodge structure on $H_{\hdotc}(LM)$ \cite{Hain3}. 
\par\medskip 
\item The corresponding Betti realisation $\textsf N^{fr}_{\mathrm B}:=\{\mathfrak m_{g,k+p}^{an}\}$ coincides with K. Costello's action. 
\end{enumerate} 
\end{Conj}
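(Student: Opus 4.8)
The plan is to construct the DG-properad $\textsf N^{fr}_{\mathrm{LMM}}$ directly from Vaintrob's logarithmic stacks $\mathbfcal M_{g,k+p}$ \cite{Vain} by passing to a motivic category of log schemes, and then to propagate the structure through realization functors. First I would fix a triangulated (or $\infty$-categorical) category $\mathrm{LMM}$ of log mixed motives carrying compatible Hodge and Betti realizations; the theory of logarithmic motives discussed in \cite{Vain} provides the natural framework for the log smooth stacks at hand. Since the stacks $\mathbfcal M_{g,k+p}$ are defined over $\mathbb Q$ and are log smooth with normal-crossings boundary, each determines a motive $M(\mathbfcal M_{g,k+p})$, and the Kimura-Stasheff-Voronov gluing morphisms \cite{KSV}, being algebraic maps of log stacks (normalization and clutching of nodal framed curves), induce the properadic composition at the motivic level. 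The DG-structure would be extracted from the filtration of the compactified moduli by the number of nodes, exactly as the Deligne-Mumford sheaves and Getzler-Kapranov complexes were assembled earlier in the paper; this turns the nodal stratification into a complex of motives whose total object is the desired $\textsf N^{fr}_{\mathrm{LMM}}$.

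For part (ii) I would apply the Hodge realization $R_{\mathrm H}\colon \mathrm{LMM}\to D^b(\mathrm{MHS})$ to obtain $\textsf N^{fr}_{\mathrm{MHS}}$ as a DG-properad in mixed Hodge structures, functoriality of $R_{\mathrm H}$ ensuring that the compositions are realized by morphisms of MHS. The action on $H_{\hdotc}(LM)$ is the delicate point, and here one restricts to $M$ a smooth projective variety so that Hain's theory \cite{Hain}\cite{Hain2}\cite{Hain3} supplies a canonical mixed Hodge structure on $H_{\hdotc}(LM)$. Costello's construction \cite{Cost1} factors the action through cyclic/Hochschild invariants of the de Rham algebra of $M$; I would reinterpret the relevant pairings as maps built from algebraic correspondences on $\mathbfcal M_{g,k+p}\times M^{k+p}$, so that after Hodge realization they become morphisms of MHS, with compatibility of weight and Hodge filtrations following from strictness and the known mixed Hodge structure on the framed-curve moduli.

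For part (iii) I would invoke the stated equivalence between the Kato-Nakayama analytification \cite{KaN} of the reduced log stacks and G. Segal's bordism properad $\{\mathcal N_{g,k,p}^{fr}\}$. Since the Betti realization $R_{\mathrm B}\colon \mathrm{LMM}\to D(\mathbb Q)$ computes the rational (co)homology of the underlying analytic space, and the Kato-Nakayama space is precisely $\{\mathfrak m_{g,k+p}^{an}\}$, one obtains $R_{\mathrm B}(\textsf N^{fr}_{\mathrm{LMM}})\simeq\{\mathfrak m_{g,k+p}^{an}\}$ as DG-properads. The induced action on $H_{\hdotc}(LM)$ then coincides with Costello's action by construction, once one checks that the correspondences used in (ii) Betti-realize to the bordism-gluing operators, which holds because both are induced by the same clutching maps of curves.

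The hard part will be part (i): securing a category of log mixed motives robust enough that the \emph{entire} properadic structure—the higher compositions, the DG-differential coming from the nodal stratification, and the compatibility with both realizations—can be organized coherently, and in particular controlling the motives of the compactified, possibly singular, framed-curve moduli and showing that the clutching maps are motivic rather than merely topological. A secondary obstacle is producing a genuinely motivic (as opposed to Hodge-theoretic) model for Costello's action on $LM$, which is presently known only analytically; this is precisely why the statement is phrased with (i) as an existence assertion and (ii)–(iii) as compatibility assertions for its realizations.
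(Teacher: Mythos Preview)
The statement you are addressing is explicitly a \emph{Conjecture} in the paper, not a theorem: the author states it in the introduction as speculation about string topology, remarks immediately afterward that ``It would be very interesting to study the relation to the Goldman-Turaev formality \dots\ I hope to elaborate on this elsewhere,'' and offers no proof anywhere in the body of the paper. There is therefore no ``paper's own proof'' to compare your proposal against.

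What you have written is a reasonable research outline for how one might \emph{attack} the conjecture, and you correctly identify the principal obstruction (the absence of a sufficiently robust category of log mixed motives in which the full properadic structure, including the DG-differential from the nodal stratification, can be organized). But this is a sketch of a program, not a proof: each of your steps depends on substantial unproved assertions---the existence and adequate properties of $\mathrm{LMM}$, the claim that Costello's analytic construction can be rewritten via algebraic correspondences on $\mathbfcal M_{g,k+p}\times M^{k+p}$, and the compatibility of those correspondences with both realizations. None of these is currently available in the literature, which is precisely why the author records the statement as a conjecture rather than a result.
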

It would be very interesting to study the relation to the Goldman-Turaev formality \cite{AKKN} and to the "derived" motivic Galois group \cite{Ay} action on formalities. I hope to elaborate on this elsewhere.



\subsection{Structure of the paper} In Section $2$ we recollect some facts about the sheaves on diagrams (following P. Deligne \cite{SD}), moduli of tropical curves and Getzler-Kapranov complexes \cite{AK}. We also introduce a moduli space of oriented tropical curves. In Section $3$ we define and study oriented Getzler-Kapranov complexes. We give a definition of the morphism $\Psi$ and prove Theorem \ref{ziv} and Corollaries \ref{ziv1} and \ref{ziv2}. Section $4$ is devoted to give an "operadic" definition of the oriented Getzler-Kapranov complex. In this Section we prove Theorem \ref{def1} and Corollary \ref{imp} as well as explain Conjecture \ref{MW1} and speculate a little about it (Theorem \ref{MW4}). 

\subsection{Acknowledgments} This work was supported by the Max Planck Institute for Mathematics in Sciences.

\section{Preliminaries}

\subsection{Notation} For a natural number $n\in \mathbb N_+$ we will denote by $[n]$ a finite set such that $[n]:=\{1,2,\dots,n\}.$ Let $I$ be a finite set, by $\mathrm {Aut}(I)$ we will denote the group of automorphisms of this set, in the case when $I=[n]$ we will use a notation $\Sigma_n:=\mathrm {Aut}([n])$ for a symmetric group on $n$-letters. We work over the field of rational number $\mathbb Q$ (all sheaves and vector spaces are defined over $\mathbb Q$). For a $\mathbb Q$-linear representation $V$ of a finite group $G$ we will denote by $V^G$ (resp. $V_G$) the space of $G$-invariants (resp. $G$-coinvariants). Since the characteristic of a field $\mathbb Q$ is zero, the canonical morphism
$V_G\longrightarrow V^G$ is an isomorphism, and hence we will freely switch between invariants and coinvariants. For a finite $S$ we will denote by $\det(S):=\bigwedge^{\dim V\langle S\rangle} V\langle S\rangle$ the determinant on the free $\mathbb Q$-vector space $V\langle S\rangle$ generated by a set $S.$ 
\par\medskip 
For any $g\geq 0,$ and a finite set $S$ we have a moduli stack $\mathcal M_{g,S}$ which parametrises smooth proper algebraic curves of genus $g$ with $S$-labelled marked points. This is a smooth Artin stack of dimension $3g-3+|S|$ over $\mathbb Z.$ When $2g+|S|-2>0$ this is a smooth Deligne-Mumford stack. For $2g+|S|-2>0$ according to \cite{DM} there exists a compactification $\overline {\mathcal M}_{g,S}$ of $\mathcal M_{g,S}$ which is defined by adding all stable nodal curves. $\overline {\mathcal M}_{g,S}$ is a smooth and proper Deligne-Mumford stack over $\mathbb Z.$ The complement to a smooth locus is a divisor with normal crossings. 

\subsection{Sheaves on diagrams} We recall some basic facts about sheaves on diagrams following \cite{SD} and \cite{AK}. Let $A$ be category and $X_A$ be an $A$-diagram with a stratification $\mathcal S.$ We will denote by $\textsf D_{lax}(X_A,\mathcal S)$ the derived category of DG-constructible lax-sheaves on $X_A.$ We will use a notation $\textsf D(X_A,\mathcal S)$ for the triangulated category of DG-constructible sheaves on $X_A.$ With a morphism $f\colon X_A\longrightarrow Y_A$ between $A$-diagrams where each square is fibered we associate the pair of functors:
$$\mathbf R^{\hdot} g_!\colon \textsf D(X_A,\mathcal S)\longleftrightarrow \textsf D(Y_A,\mathcal S)\colon g^*.$$
Let $f\colon B\longrightarrow A$ be a functor, by $X_A\times_A B$ be denote the pullback of the $A$-digram $X_A.$ Following \cite{SD} we have a pair of adjoint functors:
$$
\mathrm {LKan}(f)\colon \textsf D_{lax}(X_A\times_A B,\mathcal S)\longleftrightarrow \textsf D_{lax}(X_A,\mathcal S)\colon \mathrm {Res}(f).
$$
Where a functor $\mathrm {LKan}(f)$ (a left Kan extension) is a left adjoint to $\mathrm {Res}(f).$ Note that generally a left Kan extension is only right exact and one has to take the left derived one, however in all cases which we will be interested in it is also left exact. We also have a notion of the derived sections with compact support: 
$$
\mathbf R^{\hdot}\Gamma_c(X_A,\,\,)\colon  \textsf D(X_A,\mathcal S)\longrightarrow \textsf D(\textsf {Vect}). 
$$

\subsection{Moduli of tropical curves and DG-sheaves} Let $g\geq 0$ and $S$ be a finite set such that $2g+|S|-2>0.$ Denote by $J_{g,S}$ a category of stable weighted $S$-marked graphs of genus $g$ \cite{CGP1} \cite{CGP2} We have a natural $J_{g,S}^{\circ}$-diagram $\mathcal M_{g,S}^{trop}$ associated with this category. The corresponding colimit $M_{g,S}^{trop}$ is called the moduli space of $S$-marked tropical curves of genus $g$ \cite{BMV}. Recall that in \cite{AK} we studied the Deligne-Mumford sheaves $\EuScript {DM}_{g,S}$ on the diagram $\mathcal M_{g,S}^{trop}.$ These DG-sheaves play a role of the geometric avatar of E. Getzler and M. Kapranov's notion of a modular operad associated with moduli stacks of stable curves \cite{KG}. For every even integer $d$ the $S$-marked Getzler-Kapranov complex $\textsf H_S\textsf {GK}^{\hdot}_d$ is defined as the derived global sections of the diagram $\mathcal M_{g,S}^{trop}$ with coefficients in $\EuScript {DM}_{g,S}.$
The Getzler-Kapranov complex  $\textsf H_S\textsf {GK}^{\hdot}_d$ computes the compactly supported cohomology of the moduli stack of smooth curves $\mathcal M_{g,S}$ \cite{AK} and serves as the geometric counterpart of the Feynmann transform defined by E. Getzler and M. Kapranov \cite{KG}. For each $k\geq 0$ the DG-combinatorial sheaves $\mathrm {Gr}^{\EuScript W}_k\EuScript{DM}_{g,S}$ on $\mathcal M_{g,S}^{trop}$ were defined in \cite{AK}. These combinatorial sheaves are designed in the way that the corresponding DG-vector space of the derived global sections with compact support $\textsf W_k\textsf H_S\textsf {GK}^{\hdot}_d$ computes the weight $k$ quotient of $H_c^{\hdot}(\mathcal M_{g,S},\mathbb Q).$ The case of the zero weight will be essentially important to us. Namely in this case we have the quasi-isomorphism between $\textsf W_0\textsf H_S\textsf {GK}^{\hdot}_d$ and the marked graph complex $\textsf H_S\textsf {GC}_d^{\hdot}$ \cite{CGP2}.









\subsection{Oriented tropical curves} By a \textit{directed graph} $G$ we understand a connected directed graph with loops and parallel edges allowed. A set of vertices will be denoted by $V(G).$ A set of edges will be denoted by $E(G).$ For each vertex $v$ we will denote by $h(v)$ a set of \textit{half-edges} attached to $v.$ We have a natural decomposition $h(v)=h_{in}(v)\sqcup h_{out}(v),$ where $h_{in}(v)$ is a set of \textit{incoming half-edges} and $h_{out}(v)$ is a set of \textit{out-going half-edges}. We say that a directed graph $G$ is \textit{oriented} if $G$ does not contain directed cycles of edges in particular it means that we do not have "wheels". By a \textit{weighted $n$-marked oriented graph} we understand a triple $(G,w,m_{out})$ where $G$ is an oriented graph together with a function $w\colon V(G)\longrightarrow \mathbb N$ called the \textit{weight (genus) function} and a function $m_{out}\colon S\longrightarrow V(G)$ called  $S$-\textit{marking}. For a vertex $v$ in a weighted marked graph we will use the following notation $n_{in}(v):=h_{in}(v)$ (resp. $n_{out}(v):=h_{out}(v)\sqcup m^{-1}_{out}(v)$ ). The \textit{genus} of a weighted $n$-marked directed graph $(G,w,m_{in},m_{out})$ is defined by the standard formula:
 $$g:=b_1(G)+\sum_{v\in V(G)} w(v),$$ 
 where $b_1(G)=|E(G)|-|V(G)|+1$ is the first Betti number of $G,$ considered as a one dimensional CW complex. We demand that there are no passing vertices of the zero weight, $m_{out}(v)$ is non empty for every $v\in V(G)$ and each vertex of the zero weight is at least two valent. It is convenient to represent markings as a directed marked half-edges (hairs) attached to a vertex with a flow directed outwards a vertex.
Oriented weighted $n$-marked graphs of genus $g$ naturally form a category denoted by $OJ_{g,S}.$ Objects of this category are oriented weighted $S$-marked graphs and morphisms are given by compositions of contractions of edges and isomorphisms which preserve markings and genus labelings (cf. \cite{KG} \cite{CGP1}). Let us explain what do we mean by the contraction of edges. There are type of such contractions. First is that we have a directed edge $e$ joining two vertices $v$ and $v'$ such that there are no parallel edges to $e$. Then we contract this edge and the resulting vertex $v'$ is assigned with a weight $w+w'.$ If $e$ is an edge joining two vertices with additional $l$ parallel edges attached we remove this edges and equip the resulting vertex with a weight $w+w'+l$ (here all edges are have the same flow by the oriented condition). 
$$
\dots\underbrace{
\Ba{c}
{\xy
(5,0)*{...},
   \ar@/^1pc/(0,0)*{{}_w \bullet};(10,0)*{\bullet_{w'}}
   \ar@/^{-1pc}/(0,0)*{{}_{w}\bullet};(10,0)*{\bullet_{w'}}
   \ar@/^0.6pc/(0,0)*{{}_{w}\bullet};(10,0)*{\bullet_{w'}}
   \ar@/^{-0.6pc}/(0,0)*{{}_{w}\bullet};(10,0)*{\bullet_{w'}}
 \endxy}
 \Ea}_{l+1\ \mathrm{edges}}\dots \longmapsto \bullet_{w+w'+l}
 $$
A collection of two parallel edges with a two valent source vertex of weight zero attached will be called an \textit{oriented loop}:
$$
\Ba{c}\resizebox{6mm}{!}  {\xy
(0,5)*{},
   \ar@/^1pc/(0,0)*{\bullet};(0,10)*{\bullet}
   \ar@/^{-1pc}/(0,0)*{\bullet};(0,10)*{\bullet}
 \endxy}
 \Ea
 $$
For an oriented $S$-marked graph $G$ we will denote by $\mathrm{Aut}(G):=\mathrm {Isom}_{OJ_{g,S}}(G,G)$ a group of automorphisms of $G.$ For an oriented $S$-marked graph $G$ and $e\in E(G)$ we will denote by $G_e$ a graph that is obtained from $G$ by contracting an edge $e.$ The category $OJ_{g,n}$ has the terminal object denoted by $\bullet_{g,S}:$ 
$$
\underbrace{
\begin{tikzpicture}[baseline=-.65ex]
\node[int] (v) at (0,.5) {{}};
\node[ext] (h1) at (-.7,-.2) {1};
\node[ext] (h2) at (-.3,-.2) {2};
\node[ext] (h3) at +(.7,-.2) {n};
\draw (v) edge[-latex] (h1) edge[-latex] (h2)  edge[-latex] (h3) +(.25,-.7) node {$\scriptstyle \dots$};
\end{tikzpicture}
}_{S}
$$
By $OJ_{g,n}^k$ we will denote a set of oriented $S$-marked graphs $G$ with $E(G)=k.$

\par\medskip

\begin{Def}\label{trop1} Fix $g\geq 0$ and a non empty finite set $S$ with a condition $2g-2+|S|>0.$ We define a $OJ_{g,S}$-diagram $\mathcal {OM}^{trop}_{g,S}$ called a \textit{moduli diagram of oriented tropical curves of genus $g$ with $S$-markings}:
$$
\mathcal {OM}^{trop}_{g,S}\colon OJ_{g,S}^{\circ}\longrightarrow \textsf {Top}
$$
By the following rule:
\par\medskip
For every object $(G,w,m_{out})\in OJ_{g,S}$ we set:
$$
\mathcal {OM}^{trop}_{g,S}\colon (G,w,m_{out})\longmapsto \mathbb R_{\geq 0}^{V(G)}:={\sigma}_{G}^{or}.
$$
For a morphism $f\colon (G,w,m_{out})\longrightarrow (G',w',m'_{out})$ we set:
$$
\mathcal {OM}^{trop}_{g,S}\colon f\longmapsto \sigma f,
$$
where $\sigma f\colon \sigma_{G'}^{or}\longrightarrow \sigma_{G}^{or}$ is a map that sends a $v'$-coordinate of $\mathbb R_{\geq 0}^{V(G')}$ to a $v$-coordinate of $\mathbb R_{\geq 0}^{V(G)}$ if $f$ sends (bijectively) all incoming half-edges $h_{in}(v)$ attached to a vertex $v$ to to incoming half-edges $in(v')$ attached to $v'$ and zero otherwise.
\end{Def}
Note that the colimit of the $OJ_{g,S}^{\circ}$-diagram above $OM_{g,S}^{trop}:=\clim_{OJ_{g,S}^{\circ}}\mathcal {OM}_{g,S}^{trop}$ will be called a \textit{moduli space of oriented tropical curves of genus $g$ with $S$-markings} (analogous to \cite{CGP2} \cite{CGP1}). Each space $\sigma_{G}^{or}$ carries a natural stratification by strata $\overline{S}_{G'}^{or}:=\mathbb R^{V(G')}_{\geq 0},$ where $G'$ is a graph such that there is a morphism $f\colon G\rightarrow G'$ in a category $OJ_{g,S}$  (such stratum is of dimension $|V(G')|.$) Moreover:
$$
S_{G'}^{or}\subset \overline{S}_{G''}^{or} \Leftrightarrow G'' \rightarrow G'.
$$
We will denote this stratification by $\mathcal S_{G}^{or}$ and the resulting stratification on $\mathcal {OM}_{g,S}^{trop}$ will be denoted by $\mathcal S_{or}.$
\begin{remark} Recall that a moduli space of tropical curves $M_{g,S}^{trop}$ is closely related to the Berkovich analytification of the coarse Deligne-Mumford moduli space $\overline{M}_{g,S}$ \cite{ACP}. Our motivation to introduce moduli spaces $\mathcal {OM}_{g,S}^{trop}$ has a completely a "properadic origins" and inspired by the deformation complex of the properad of involutive Lie bialgebras (see proof of Proposition \ref{alt1}). We \textit{do not} know other interpretations of $OM_{g,S}^{trop}$ and It would be interesting to find one.

\end{remark}

\section{Oriented Getzler-Kapranov complexes}

\subsection{Framed surfaces} 
Let $C$ be a bordered surface with at most nodal singularities in the interior of the surface i.e. each component of a normalisation is compact Riemann surface with boundary locally analytically modelled on the upper half plane $\{z\in \mathbb C\colon \mathrm{Im}\,z \geq  0\}.$  We say that a nodal bordered surface $C$ is a \textit{framed nodal surface} if $C$ has an analytic parametrisation $\phi_i\colon S^1 \overset{\sim}{\rightarrow} \partial C_i$ for each boundary component $\partial C_i.$ A component $\partial C_i$ is called an \textit{input} or an \textit{output} if the orientation induced by the parametrisation coincides, respectively is opposite to the boundary orientation of $\partial C_i.$ By $\mathbb D$ we will denote the standard unit disks with a framing defined by the rule $\theta\mapsto \exp(2\pi i \theta),$ by $\overline{\mathbb D}$ we will denote denote the unit disk with reversed framing i.e. $\theta\mapsto \exp(-2\pi i \theta).$ We say that a framed surface $C$ is \textit{stable} if each (non-bordered) component of the normalisation is stable in the sense of P. Deligne and D. Mumford \cite{DM}.
\begin{Def} By $\overline{\mathcal N}_{g,I,J}^{fr}$ we denote a \textit{moduli space of stable framed surfaces of genus $g$ with $I$-input framings and $J$-output framings}.

\end{Def} 
For an oriented graph $G\in OJ_{g,S}$ we denote by $\overline{\mathcal N}^{fr}(G)$ the following moduli space:
$$
\overline{\mathcal N}^{fr}(G):=\prod_{v\in V(G)} \overline{\mathcal N}_{w(v),n_{in}(v),n_{out}(v)}^{fr}
$$
Analogous to the case of Deligne-Mumford moduli stacks we have \textit{gluing morphisms} (clutching morphisms). Let $G\rightarrow G'$ be a morphism of oriented graphs i.e. a morphism in $OJ_{g,S}.$ We have a morphism:
\begin{equation}\label{glue}
glue_{G\,G'}\colon \overline{\mathcal N}^{fr}(G)\longrightarrow \overline{\mathcal N}^{fr}(G').
\end{equation} 
This morphism is defined by gluing of framed surfaces along boundary (see page $17$ in \cite{AV} ) and stabilising the resulting component. 

Recall that with a stable $I$-marked weighted graph $G\in J_{g,I}$ one can associate the proper and smooth Deligne-Mumford stack $\overline{\mathcal M}(G):=\prod_{v\in V(G)} \overline{\mathcal M}_{w(v),n(v)}.$ This stack is equivalent to the normalisation of the closure of the stratum in $\overline{\mathcal M}_{g,n},$ which consists of stable curves with a dual graph being $G.$ For a stable $I$-marked weighted graph $G\in J_{g,S}$ by a \textit {spanning forest} $\tau$ we understand the subgraph (it can be non-connected) which contains all vertices of $G$ (markings), such that each connected component of $\tau$ contains exactly one marking and no cycles.  With a pair $(G,\tau)$ we can associate a directed graph $G_{\tau}$ by the following rule: 
\begin{itemize}
\item We put a flow on each connected component of a spanning forest which goest towards the marking. 
\item All other edges are replaced by the graphs:
$$\dEd$$
\end{itemize}

With an element $C\in \overline{\mathcal M}_{G}$ and a spanning tree $\tau$ we can associate a framed surface $Fr_{\tau}(C)$ by the following rule. For a component $C(v)$ at a vertex $v$ we glue at each marked point $h$ a nodal disks such that if $h\in n_{in}(v)$ for $G_{\tau}$ we glue $\mathbb D$  and if $h\in n_{out}(v)$ we glue $\overline{\mathbb D}$ if $h$ does not belong to the spanning tree $\tau$ we attach a nodal disk $\mathbb D.$ Further at each vertex $v\in G_{\tau}$ which does not belong to a spanning tree we we take analytically parametrised nodal annulus (genus zero nodal bordered surface with two outgoing boundaries). It is easy to see that this construction extends to the the morphism between $\overline{\mathcal M}(G)$ and  $\overline{\mathcal N}^{fr}(G)$ which we shall denote by $Fr_{\tau}$ (cf. \cite{AV}).
\begin{Lemma}\label{KL1} For a spanning tree $\tau$ of $G$ we have a homotopy equivalence of moduli spaces:
\begin{equation}\label{fr}
Fr_{\tau}\colon  \overline{\mathcal M}({G})\overset{\sim}{\longrightarrow}\overline{\mathcal N}({G_{\tau}})^{fr}.
\end{equation} 
with a property:
\begin{equation}
\begin{diagram}[height=2.7em,width=2.7em]
\overline{\mathcal N}({G_{\tau}})^{fr} & &  \rTo^{glue_{G_{\tau}\,G'_{\tau}}} &  &  \overline{\mathcal N}({G_{\tau}'})^{fr} &  \\
\uTo^{Fr_{\tau}}_{\sim} & & &  & \uTo_ {Fr_{\tau}}^{\sim }  && \\
 \overline{\mathcal M}({G})  & &  \rTo^{\xi_{G\, G'}} &  &  \overline{\mathcal M}({G'})  \\
\end{diagram}
\end{equation}

\end{Lemma}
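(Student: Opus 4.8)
The plan is to build the map $Fr_\tau$ vertex-by-vertex and then check that the vertex-local construction is compatible with the global gluing morphisms on both sides. First I would observe that the construction of $Fr_\tau(C)$ described just before the statement is genuinely local: at a vertex $v$ of $G$ one takes the component $C(v)\in\overline{\mathcal M}_{w(v),n(v)}$ and produces a framed surface by capping each of its marked points with a framed nodal disk $\mathbb D$ or $\overline{\mathbb D}$, the choice being dictated by whether the corresponding half-edge of $G_\tau$ is incoming or outgoing; at a vertex of $G_\tau$ not meeting $\tau$ one inserts an analytically parametrised nodal annulus. So it suffices to treat a single vertex, i.e. to show that capping off marked points by standard framed disks gives a homotopy equivalence
$$
\overline{\mathcal M}_{g,n}\overset{\sim}{\longrightarrow}\overline{\mathcal N}^{fr}_{g,I,J},\qquad I\sqcup J=[n],
$$
and then take the product over $V(G)$, which preserves homotopy equivalences.

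For the single-vertex statement I would argue that both sides are homotopy equivalent via a retraction that shrinks the collar/boundary data: on $\overline{\mathcal N}^{fr}_{g,I,J}$ the space of analytic parametrisations of a fixed boundary circle is contractible (it is a torsor under the identity component of the analytic diffeomorphism group of $S^1$, which deformation retracts onto $SO(2)$, and that residual rotation is absorbed by the disk-capping), so forgetting the framing and filling each boundary with the standard disk is a homotopy equivalence onto the moduli space of stable nodal curves with $n$ marked points — this is exactly the Costello-type equivalence alluded to in the introduction and carried out in \cite{AV}, \cite{Cost}, which I would cite rather than reprove. The capping recipe is set up precisely so that a marked point that lies on $\tau$ gets an annulus-mediated gluing consistent with the flow towards the marking, while the two directions of a non-$\tau$ edge get matched $\mathbb D,\overline{\mathbb D}$ pair together with the inserted annulus at the new valence-two vertex; this is the content of the replacement rule $\dEd$ for non-forest edges.

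Finally, for the commuting square I would check compatibility of $Fr_\tau$ with clutching directly on the level of the gluing data. A morphism $G\to G'$ in $OJ_{g,S}$ is a composite of edge contractions, so by functoriality it is enough to handle the contraction of a single edge $e$. On the Deligne-Mumford side $\xi_{G\,G'}$ identifies the two marked points at the ends of $e$ to a node; on the framed side $glue_{G_\tau\,G'_\tau}$ glues the corresponding framed boundary circles (one an input, one an output, by orientedness) and stabilises. The point is that $Fr_\tau$ caps those two marked points with $\mathbb D$ and $\overline{\mathbb D}$ respectively, and gluing $\mathbb D$ to $\overline{\mathbb D}$ along their common framed boundary produces exactly a nodal disk pair, i.e. the same local model as the node created by $\xi_{G\,G'}$; when the contracted edge lies outside $\tau$ one must additionally absorb the annulus sitting at its midpoint vertex, which is analytically a nodal cylinder and collapses to a node under stabilisation. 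Checking that the chosen spanning forest $\tau$ of $G$ restricts/descends to a spanning forest $\tau'$ of $G'$ compatibly (so that the two $Fr$'s in the diagram are really "the same" $\tau$) is a small combinatorial bookkeeping step. The main obstacle I expect is precisely this last compatibility: making the annulus-insertion at non-forest edges fit together coherently with contraction, i.e. verifying that stabilising the glued framed surface gives back, on the nose up to canonical homotopy, the framed surface associated to $(G',\tau')$ — this requires a careful analysis of the stabilisation morphism on annular/nodal-cylinder components and is where the homotopy (as opposed to strict) nature of the equivalence in \eqref{fr} is genuinely used.
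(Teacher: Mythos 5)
Your argument is correct in substance and lands on the same essential input as the paper --- both defer the real analytic content to \cite{AV} (and Costello) --- but the mechanics differ enough to be worth noting. The paper's proof goes in the opposite direction: it exhibits an explicit map $cap\colon \overline{\mathcal N}(G_{\tau})^{fr}\to \overline{\mathcal M}(G)$ (remove the nodal disks, recap with standard $\mathbb D$ and $\overline{\mathbb D}$), observes $cap\circ Fr_{\tau}=\mathrm{Id}$, and quotes Lemma $6.4$ of \cite{AV} to upgrade this one-sided inverse to a deformation retract; it says nothing about the commuting square. You instead reduce to the single-vertex statement and argue via contractibility of the parametrisation data, with the $S^1$ of residual rotations absorbed by the automorphisms of the attached nodal disks --- that parenthetical is exactly the point (the bare space of analytic parametrisations is only an $S^1$-torsor up to homotopy, and it is the nodal-disk stabilisation that kills it), so be careful not to phrase it as ``the space of parametrisations is contractible'' outright. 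What your route buys is precisely what the paper omits: an actual verification of the square, reduced to a single edge contraction, where gluing the $\mathbb D$/$\overline{\mathbb D}$ caps reproduces the node created by $\xi_{G\,G'}$ and the annulus at a non-forest midpoint collapses under stabilisation; together with the bookkeeping that $\tau$ descends to a spanning forest of $G'$, this is a genuine addition rather than a restatement, and your identification of it as the delicate step is accurate.
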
 
\begin{proof} Following Lemma $6.4$ from \cite{AV} we have a morphism:
$$
cap\colon \overline{\mathcal N}({G_{\tau}})^{fr}\longrightarrow  \overline{\mathcal M}({G})
$$
which is defined by removing nodal disks and gluing $\mathbb D$ at each incoming half-edge and $\overline{\mathbb D}$ at each outgoing half-edge. It is easy to see that $cap\circ Fr=\mathrm {Id}.$ Further following \textit{ibid.} one can prove that this morphism is a deformation retract. 

\end{proof} 

\subsection{Combinatorial DG-sheaves on $\mathcal {OM}_{g,S}^{trop}$}

For a stratified space $\sigma_{G}^{or}$ associated with a weighted oriented $S$-marked graph $G\in OJ_{g,S}$ we define an $\mathcal S_{G}^{or}$-smooth DG-combinatorial sheaf  $\EuScript {ODM}_{G}\in \textsf {Sh}_c(\sigma_{G}^{or},\mathcal S_{or})$ using the standard gluing construction for combinatorial DG-sheaves on the stratified spaces with contractible strata (Proposition $1.8$ in \cite{KaSh}). Namely we set:
\begin{itemize}
  \item \par\medskip
For every graph $G'\in OJ_{g,S,G}^{\circ}$ we set:
$$
R\Gamma(S_{G}^{or},\EuScript {ODM}_{G}):=\bigotimes_{v\in V(G')} C^{\hdot}(\overline{\mathcal N}_{w(v),inv(v),out(v)}^{fr},\mathbb Q)
$$
\par\medskip
  \item For every inclusion of strata $S_{G'}^{or} \subset \overline{S}_{G''}^{or}$ we define a variation map:
$$
var_{G', G''}\colon \mathbf R^{\hdot}\Gamma(S_{G'}^{or},\EuScript{ODM}_{G})\longrightarrow \mathbf R^{\hdot}\Gamma(S_{G''}^{or},\EuScript{ODM}_{G}),\quad var_{G', G''}:=glue_{G',G''}^*
$$
as being induced by a pullback along the gluing morphism.
\end{itemize}
Since gluing maps satisfy the associativity condition we have the well-defined combinatorial DG-sheaf on $\sigma_{G}^{or}.$ Hence we give:
\begin{Def} For every $g\geq 0$ and a non-empty finite set $S$ with a condition $2g-2+|S|>0$ we define an $\mathcal S_{or}$-smooth combinatorial DG-sheaf $\EuScript{ODM}_{g,S}:=\{\EuScript{ODM}_{G}\}_{G\in OJ_{g,S}}$ on the $OJ_{g,S}^{\circ}$-diagram $\mathcal {OM}^{trop}_{g,S},$ called a \textit{framed Deligne-Mumford DG-sheaf} with connecting quasi-isomorphisms:
$$
\alpha(m)\colon m^*\EuScript{ODM}_{G}\overset{\sim}{\longrightarrow}\EuScript{ODM}_{G'},\quad m\colon G\longrightarrow G'
$$
\end{Def}

\par\medskip

Applying the truncation functors we get a sequence of DG-combinatorial sheaves on $\mathcal {OM}_{g,S}^{trop}$ (cf. \cite{AK}):
$$
\EuScript W_0\EuScript{ODM}_{g,S}\rightarrow \EuScript W_1\EuScript{ODM}_{g,S}\rightarrow\dots \rightarrow \EuScript{ODM}_{g,S}
$$
\begin{Def} For every $g\geq 0$ and a non-empty finite set $S$ with a condition $2g-2+|S|>0$ we define an $\mathcal S_{or}$-smooth DG-combinatorial sheaf $\mathrm {Gr}^{\EuScript W}_k\EuScript{ODM}_{g,S}$ on the $OJ_{g,S}^{\circ}$-diagram $\mathcal {OM}_{g,S}^{trop}$ called a \textit{framed Deligne-Mumford sheaf of the weight $k$} by the rule:
$$
\mathrm {Gr}^{\EuScript W}_k\EuScript{ODM}_{g,S}:=\mathrm {Cone}(\EuScript W_{k-1}\EuScript{ODM}_{g,S}\rightarrow \EuScript                             W_{k}\EuScript{ODM}_{g,S})
$$

\end{Def}




\subsection{Oriented Getzler-Kapranov complexes} 

Following \cite{AK} we give the following:

\begin{Def} For $d\in \mathbb Z$ we define the \textit{oriented $S$-marked Getzler-Kapranov complex} $\textsf H_S\textsf {OGK}_{d}^{\hdot}$ by the rule:
$$
\textsf H_S\textsf {OGK}_{d}^{\hdot}:=\prod_{g\geq 0\, 2g+|S|-2>0}^{\infty} \mathbf R^{\hdot+g(1-d)-|S|}\Gamma_c(\mathcal {OM}_{g,S}^{trop},\EuScript {ODM}_{g,S})
$$

\end{Def}

We can also define the weight $k$ complexes: 

\begin{Def} For every non empty set $S$ and an odd integer $d$ we denote by $\textsf W_k\textsf H_S\textsf{OGK}_{d}^{\hdot}$ a decorated graph complex which computes the cohomology with compact support of the $OJ_{g,n}^{\circ}$-diagram $\mathcal {OM}_{g,n}^{trop}$ with coefficients in the DG-combinatorial sheaf $\mathrm {Gr}^{\EuScript
W}_k\EuScript{ODM}_{g,n}:$                                                                                                                             
$$
\prod_{g\geq 0\, 2g+|S|-2>0}^{\infty}\mathbf R^{\hdot+g(1-d)-|S|}\Gamma_c(\mathcal {OM}^{trop}_{g,n},\mathrm {Gr}^{\EuScript W}_k\EuScript{ODM}_{g,n}):=\textsf W_k\textsf H_S\textsf{OGK}_{d}
$$
This DG-vector space will be called the \textit{oriented Getzler-Kapranov complex of the weight $k$.}
\end{Def}

Denote by $j\colon OM_{g,S,w=0}^{trop} \hookrightarrow  OM_{g,S}^{trop}$ the open locus of directed graphs where all vertices have zero weights attached. Following \cite{AWZ} (cf. \cite{Will2}) for a non empty set $S$ and an odd integer $d$ we define the \textit{$S$-marked oriented graph graph complex} $\textsf {H}_S\textsf {OGC}^{\hdot}_d$ by the rule:
\begin{equation}\label{ogc}
\textsf {H}_S\textsf {OGC}^{\hdot}_d:=\prod_{g\geq 0\, 2g+|S|-2>0}^{\infty} C_c^{\hdot+g(1-d)-|S|}(OM_{g,S,w=0}^{trop},\mathbb Q) 
\end{equation}
Applying the Cousin resolution one can realises this complex explicitly. Indeed element are given by pairs $(G,or)$ where $G$ is an oriented graph such that all vertices has weight zero and are least bivalent and have at least one outgoing edge also there are no passing vertices, i.e. bivalent vertices with one incoming and one outgoing edge. An orientation $or\in \det(V(G))$ and the differential is given by splitting a vertex. The grading is given by the rule $|G|=V(G)+g(1-d)-1.$ Hence Definition \ref{ogc} coincided with \cite{AWZ}. We have the following:
\begin{Prop}\label{alt1} For every non empty set $S$ and an odd integer $d$ we have a canonical quasi-isomorphism:
$$
j_!\colon \textsf H_S\textsf {O}\textsf {GC}^{\hdot}_d\overset{\sim}{\longrightarrow} \textsf W_0\textsf H_S\textsf {OGK}_d^{\hdot}.
$$

\end{Prop}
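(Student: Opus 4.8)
The plan is to identify $\textsf W_0\textsf H_S\textsf{OGK}_d^{\hdot}$ as the compactly supported cohomology of the diagram $\mathcal{OM}_{g,S}^{trop}$ with coefficients in $\mathrm{Gr}^{\EuScript W}_0\EuScript{ODM}_{g,S}$, and then to show that this sheaf is supported (in the derived sense) on the open weight-zero locus $OM_{g,S,w=0}^{trop}$, on which it restricts to the constant sheaf up to a shift and a determinantal twist. Concretely, the weight-zero piece of the combinatorial sheaf is built out of $\mathrm{Gr}^{\EuScript W}_0C^{\hdot}(\overline{\mathcal N}^{fr}_{w(v),n_{in}(v),n_{out}(v)},\mathbb Q)$ at each vertex $v$; by Lemma \ref{KL1} these moduli spaces are homotopy equivalent to products of Deligne–Mumford stacks $\overline{\mathcal M}(G_\tau)$, so the weight-zero cohomology of $\overline{\mathcal N}^{fr}_{h,p,q}$ is $\mathbb Q$ concentrated in degree zero exactly when $h=0$ and the number of boundaries is small, and vanishes whenever $w(v)>0$ or the vertex is not at least bivalent. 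This is the oriented analogue of the corresponding computation in \cite{AK}, \cite{CGP2} for $\mathrm{Gr}^{\EuScript W}_0\EuScript{DM}_{g,S}$.

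First I would make precise the claim that $\mathbf R\Gamma(S^{or}_{G'},\mathrm{Gr}^{\EuScript W}_0\EuScript{ODM}_G)$ is $\mathbb Q$ in degree $0$ when $G'$ has only zero-weight, at-least-bivalent vertices each with an outgoing edge and no passing vertices, and is acyclic otherwise. The non-vanishing case uses that $\overline{\mathcal N}^{fr}$ of a genus-zero surface with the allowed boundary configuration is (rationally) weight-zero-connected; the vanishing for positive weight uses that $H^0$ of a positive-dimensional Deligne–Mumford stack $\overline{\mathcal M}_{h,n}$ contributes nothing to weight zero beyond what the trivial local system already records, combined with the cone construction defining $\mathrm{Gr}^{\EuScript W}_0$. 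Next I would invoke the Cousin/local-cohomology spectral sequence (the stratification $\mathcal S_{or}$) exactly as in the explicit realisation of $\textsf H_S\textsf{OGC}_d^{\hdot}$ already spelled out in the excerpt: the only strata contributing are those indexed by the admissible graphs $G$, and the differential induced by the attaching maps of the stratification is precisely vertex-splitting with the determinantal sign. This identifies $\mathbf R\Gamma_c(\mathcal{OM}_{g,S}^{trop},\mathrm{Gr}^{\EuScript W}_0\EuScript{ODM}_{g,S})$ with the graph complex $C_c^{\hdot}(OM_{g,S,w=0}^{trop},\mathbb Q)$ up to the degree shift $g(1-d)-|S|$ built into both definitions, and the map realising this is $j_!$ for the open inclusion $j$.

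More structurally, I would phrase the argument as: $j^*\mathrm{Gr}^{\EuScript W}_0\EuScript{ODM}_{g,S}\simeq \underline{\mathbb Q}$ (up to the orientation twist) on $OM_{g,S,w=0}^{trop}$, and the adjunction counit $j_!j^*\mathrm{Gr}^{\EuScript W}_0\EuScript{ODM}_{g,S}\to \mathrm{Gr}^{\EuScript W}_0\EuScript{ODM}_{g,S}$ is a quasi-isomorphism because its cone is supported on the closed complement (graphs with a positive-weight vertex, or a vertex that is not bivalent with an outgoing edge, or a passing vertex) where the sheaf is acyclic by the vertexwise computation above. Applying $\mathbf R\Gamma_c$ and using $\mathbf R\Gamma_c(OM^{trop}_{g,S},j_!j^*(-))=\mathbf R\Gamma_c(OM^{trop}_{g,S,w=0},j^*(-))$ then yields the stated quasi-isomorphism, and one checks it is the canonical $j_!$ map and not merely an abstract isomorphism.

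The main obstacle I anticipate is the vertexwise weight-zero computation for $\overline{\mathcal N}^{fr}_{h,p,q}$: one needs that the rational cochains $C^{\hdot}(\overline{\mathcal N}^{fr}_{h,p,q},\mathbb Q)$ have $\mathrm{Gr}^{\EuScript W}_0$ equal to $\mathbb Q[0]$ precisely for the admissible $(h,p,q)$ (genus zero, the boundary count forced by "at least bivalent with an outgoing edge, no passing vertices") and are weight-zero-acyclic otherwise. This is where Lemma \ref{KL1} is essential — it reduces the question to the already-understood weight-zero behaviour of $\overline{\mathcal M}(G_\tau)$ from \cite{AK}, \cite{CGP1}, \cite{CGP2} — but one must be careful that the homotopy equivalence $Fr_\tau$ and the annulus components introduced for non-spanning-tree edges do not alter the weight filtration, i.e. that capping with $\mathbb D$ and $\overline{\mathbb D}$ and adding parametrised annuli is a weight-filtered quasi-isomorphism on rational cochains. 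Once this compatibility is in place, everything else is the formal Cousin/adjunction bookkeeping that already appears, in unoriented form, in \cite{AK}.
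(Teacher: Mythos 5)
Your overall strategy (identify $\textsf W_0\textsf H_S\textsf{OGK}_d^{\hdot}$ with $\mathbf R\Gamma_c$ of $\mathrm{Gr}^{\EuScript W}_0\EuScript{ODM}_{g,S}$ and reduce to the open locus $OM^{trop}_{g,S,w=0}$ via the counit $j_!j^*\to\mathrm{id}$) is the natural sheaf-theoretic route, but the key lemma you rest it on is false. By Lemma \ref{KL1} the space $\overline{\mathcal N}^{fr}_{h,p,q}$ is homotopy equivalent to the connected, smooth and proper stack $\overline{\mathcal M}_{h,p+q}$, whose cohomology is pure; hence $\mathrm{Gr}^{\EuScript W}_0C^{\hdot}(\overline{\mathcal N}^{fr}_{h,p,q},\mathbb Q)\simeq H^0\cong\mathbb Q$ for \emph{every} stable $(h,p,q)$ --- it does not vanish for $h>0$, nor for low-valent vertices. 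Consequently $\mathrm{Gr}^{\EuScript W}_0\EuScript{ODM}_{g,S}$ is not acyclic on the closed complement of $OM^{trop}_{g,S,w=0}$, the cone of $j_!j^*\to\mathrm{id}$ is not acyclic, and the stratum-by-stratum reduction collapses. What is actually true, and what the proposition really amounts to, is a \emph{global} statement: the subcomplex of the weight-zero Cousin complex spanned by graphs having a positive-weight vertex, a passing vertex, or a vertex violating the valency/outgoing-edge conditions (each decorated by $\mathbb Q=H^0$) is acyclic as a complex, through cancellations produced by the edge-contraction differential and by automorphisms acting by sign on $\det(V(G))$. This is the oriented analogue of the Chan--Galatius--Payne acyclicity/contractibility theorem from \cite{CGP1}, \cite{CGP2}, and it is precisely the ingredient your write-up does not supply; in the oriented setting it would additionally require handling oriented loops and source vertices, which have no unoriented counterpart.

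For comparison, the paper proves the proposition by an entirely different, properadic route: it identifies $\textsf H_S\textsf{OGC}^{\hdot}_d$ with the genus-$g$, $S$-marked part of $\mathrm{Def}(\Omega(\textsf{AC}^*)\to\textsf{hoLieB})$ and $\textsf W_0\textsf H_S\textsf{OGK}_d^{\hdot}$ with the corresponding part of $\mathrm{Def}(\Omega(\textsf{AC}^*)\to\textsf{hoLieB}^{\diamond})$, and then quotes \cite{MW2} for the quasi-isomorphism between these deformation complexes. If you wish to keep the sheaf-theoretic approach, you must either import and adapt the CGP acyclicity argument to oriented graphs, or fall back on this deformation-complex identification; the vertexwise weight-zero computation alone cannot carry the proof.
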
 

\begin{proof} 
 The result above alternatively can be proved in the following way. Let $\textsf {hoLieB}^{\diamond}$ the minimal resolution of the properad of involutive Lie bialgebras, $\textsf {hoLieB}$ is the minimal properad of Lie bialgebras \cite{CMW} (see Section $3$ for details). We have a canonical sequence of morphisms:
$$
\Omega(\textsf {AC}^*)\longrightarrow \textsf {hoLieB}\longrightarrow \textsf {hoLieB}^{\diamond}
$$
We have a morphism of deformation complexes:
\begin{equation}\label{wg}
\mathrm{Def}(\Omega(\textsf {AC}^*)\longrightarrow \textsf {hoLieB})\longrightarrow \mathrm{Def}(\Omega(\textsf {AC}^*)\longrightarrow \textsf {hoLieB}^{\diamond})
\end{equation}
In \cite{AWZ} it was proved that the left-hand side complex is quasi-isomorphic to $\textsf H_S\textsf {O}\textsf {GC}^{\hdot}_1$ (strictly we have a quasi-isomorphism between the corresponding genus $g$ parts with $S$-markings. Analogously one can show that the complex on the right-hand side is quasi-isomorphic to $\textsf W_0\textsf H_S\textsf {OGK}_d^{\hdot}.$
From \cite{MW2} quasi-isomorphism follows. 
\end{proof}

\subsection{M. Živković's morphism} Here we construct a quasi-isomorphism between the oriented Getzler-Kapranov complex and $\textsf H_S\textsf {GK}_d^{\hdot}$ following the original construction from \cite{Ziv} (see also \cite{AWZ}):
\par\medskip 

Denote by $TJ_{g,S}$ a category with objects being pairs $(G,\tau),$ where $G$ is an element of $J_{g,S}$ and $\tau$ is a weighted spanning forest of $G.$ Morphisms in this category are defined in the evident way. We have the following correspondence:
\begin{equation}\label{corr}
\begin{diagram}[height=2.0em,width=2.5em]
 TJ_{g,S}  &   \rTo^{{p}}  &   J_{g,S} &  \\
\dTo^{z} & &  && \\
OJ_{g,S} &      &    \\
\end{diagram}
\end{equation}
Where a functor $p$ is defined by forgetting a weighted spanning tree. The functor $z$ is defined by the following rule: for a connected component of a weighted spanning tree $\tau$ we put a flow on edges going towards the marking. All other edges are replaced by edges with two outputs and no inputs. Note that slice (resp. coslice) categories for a morphism $p$ are finite groupoids. From correspondence \eqref{corr} we get the following correspondence of diagram:
\begin{equation}\label{corr2}
\begin{diagram}[height=2.6em,width=2.6em]
\textsf D_{lax}(\mathcal {M}_{g,S}^{trop}\times_{J_{g,S}}TJ_{g,S},\mathcal S_{or}) & &  \rTo^{{\pi_!}}  &  &  \textsf D_{lax}(\mathcal M_{g,S}^{trop},\mathcal S_{or})\\
\dTo^{\rho_!} & & & & & \\
\textsf D_{lax}(\mathcal {OM}_{g,S}^{trop},\mathcal S_{or}) & &  &    \\
\end{diagram}
\end{equation}
Where $\rho_!:=LKan(z)\circ q_*$ and a morphism $\pi_!$ is defined by the following rule $\pi_1:=LKan(p)$ where:
$$q\colon \mathcal {M}_{g,S}^{trop}\times_{J_{g,S}}TJ_{g,S}\longrightarrow \mathcal {OM}_{g,S}^{trop}\times_{OJ_{g,S}}TJ_{g,S}$$
is a closed inclusion of $TJ_{g,S}^{\circ}$-diagrams defined by the following rule:
\begin{equation}\label{inct}
q_{G,\tau}\colon \mathbb R_{\geq 0}^{E(G)}\hookrightarrow \mathbb R_{\geq 0}^{V(G_{\tau})}
\end{equation} 
where $q_G$ is defined by the following rule:
\begin{enumerate}
\item If $e$ belongs to the edges of the spanning tree $\tau$ we map it to the coordinate $v$ of $\mathbb R_{\geq 0}^{V(G_{\tau})}$ which correspond to the target vertex of $e.$ 
\item If $e$ does not belong to the edges of the spanning tree $\tau$ we map it to the coordinate $v$ of $\mathbb R_{\geq 0}^{V(G_{\tau})}$ which corresponds to the unique vertex of $\dEd.$
\item The unique source vertex of the connected component of the spanning forest goes to zero. 
\end{enumerate} 
The functors from \eqref{corr2} posses adjoint functors $\pi^*:=Res(p)$ and $\rho^*:=q^*\circ Res(z).$ 

\begin{remark} The colimit of the moduli $TJ_{g,S}^{\circ}$-diagram $\mathcal {M}_{g,S}^{trop}\times_{J_{g,S}}TJ_{g,S}:=\mathcal {TM}_{g,S}^{trop}$ will be denoted $TM_{g,S}^{trop}$ and will be called moduli spaces of $S$-marked tropical curves with spanning forests. From \eqref{corr} we have a diagram of topological spaces:
\begin{equation}\label{corr}
\begin{diagram}[height=2.0em,width=2.5em]
TM_{g,S}^{trop}  &   \rTo^{{}}  &   M_{g,S}^{trop} &  \\
\dTo^{} & &  && \\
OM_{g,S}^{trop} &      &    \\
\end{diagram}
\end{equation}

\end{remark}

We have the following:
\begin{Prop} We have the canonical quasi-isomorphism of DG-combinatorial sheaves on $TJ_{g,S}$-diagram $\mathcal {TM}_{g,S}^{trop}:$ 

\begin{equation}\label{id}
\mathrm {Fr}\colon \rho^*\EuScript {ODM}_{g,S}\overset{\sim}{\longrightarrow} \pi^*\EuScript {DM}_{g,S}
\end{equation} 

\end{Prop}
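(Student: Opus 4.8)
The plan is to realise $\mathrm{Fr}$ as the pullback along the homotopy equivalences $Fr_{\tau}$ of Lemma \ref{KL1} and to deduce the statement formally from that lemma together with the gluing principle for combinatorial sheaves on stratified spaces with contractible strata (Proposition $1.8$ in \cite{KaSh}). First I would unwind the two sheaves on the $TJ_{g,S}^{\circ}$-diagram $\mathcal{TM}_{g,S}^{trop}$ stratum by stratum. Since $\pi^{*}=\mathrm{Res}(p)$, for an object $(G,\tau)\in TJ_{g,S}$ the sheaf $\pi^{*}\EuScript{DM}_{g,S}$ is $\EuScript{DM}_{G}$ on the cone $\mathbb R_{\geq 0}^{E(G)}$; over the stratum attached to a morphism $G\to G'$ in $J_{g,S}$ its sections are $\bigotimes_{v\in V(G')}C^{\hdot}(\overline{\mathcal M}_{w(v),n(v)},\mathbb Q)=C^{\hdot}(\overline{\mathcal M}(G'),\mathbb Q)$, with variation maps the pullbacks along the clutching morphisms $\xi_{G'\,G''}$. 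On the other side $\rho^{*}=q^{*}\circ\mathrm{Res}(z)$: the functor $\mathrm{Res}(z)$ attaches to $(G,\tau)$ the sheaf $\EuScript{ODM}_{G_{\tau}}$ on $\mathbb R_{\geq 0}^{V(G_{\tau})}$, and the closed embedding $q_{G,\tau}$ of \eqref{inct} restricts it to the subcone $\mathbb R_{\geq 0}^{E(G)}$. A direct inspection of \eqref{inct} shows that $q_{G,\tau}$ carries the face $\mathbb R_{\geq 0}^{E(G')}$ exactly onto the stratum $S^{or}_{(G')_{\tau'}}$ of $\mathbb R_{\geq 0}^{V(G_{\tau})}$, where $\tau'$ is the spanning forest of $G'$ induced by $\tau$; hence the sections of $\rho^{*}\EuScript{ODM}_{g,S}$ over that stratum are $C^{\hdot}(\overline{\mathcal N}^{fr}((G')_{\tau'}),\mathbb Q)$, with variation maps the pullbacks along the gluing morphisms of \eqref{glue}. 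As both sheaves are $\mathcal S_{or}$-smooth with contractible strata, a morphism between them is by Proposition $1.8$ in \cite{KaSh} the same datum as a family of maps on strata compatible with the variation maps and with the connecting isomorphisms $\alpha(m)$.

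Next I would define $\mathrm{Fr}$ on the stratum over $(G',\tau')$ to be the pullback $Fr_{\tau'}^{*}\colon C^{\hdot}(\overline{\mathcal N}^{fr}((G')_{\tau'}),\mathbb Q)\overset{\sim}{\longrightarrow} C^{\hdot}(\overline{\mathcal M}(G'),\mathbb Q)$ along the map of Lemma \ref{KL1}. Because $Fr_{\tau'}$ is a homotopy equivalence, $Fr_{\tau'}^{*}$ is a quasi-isomorphism of DG-vector spaces, so the resulting morphism of combinatorial DG-sheaves is a quasi-isomorphism as soon as it is shown to be well defined. Compatibility with the variation maps is exactly the commutative square of Lemma \ref{KL1}: applying $(-)^{*}$ to the square relating $\xi_{G'\,G''}$, $glue_{(G')_{\tau'}\,(G'')_{\tau''}}$ and the two copies of $Fr$ produces precisely the square expressing that $\mathrm{Fr}$ intertwines the variation maps of $\rho^{*}\EuScript{ODM}_{g,S}$ and $\pi^{*}\EuScript{DM}_{g,S}$. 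Compatibility with the connecting isomorphisms $\alpha(m)$ for a morphism $m\colon(G,\tau)\to(G_{1},\tau_{1})$ in $TJ_{g,S}$ follows from the same lemma: $m$ is a composite of an isomorphism and an edge contraction, the induced map of cones intertwines $q_{G,\tau}$ with $q_{G_{1},\tau_{1}}$, and the contraction corresponds on moduli spaces precisely to the clutching morphisms $\xi_{G\,G_{1}}$ and $glue_{G_{\tau}\,G_{1,\tau_{1}}}$, so the requisite square is again an instance of Lemma \ref{KL1}; naturality of $\tau\mapsto Fr_{\tau}$ under relabellings handles the isomorphism part. This yields the desired morphism $\mathrm{Fr}$ and shows it is a quasi-isomorphism.

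The hard part is not analytic — the genuine geometric input, namely the deformation retraction $cap$ and hence the homotopy equivalences $Fr_{\tau}$ together with their compatibility with clutching, is already packaged in Lemma \ref{KL1} (after \cite{AV}). The main obstacle is the bookkeeping around the correspondence \eqref{corr2}: one must carefully identify the abstract functors $\mathrm{Res}(p)$, $\mathrm{Res}(z)$ and $q^{*}$ applied to $\EuScript{DM}_{g,S}$ and $\EuScript{ODM}_{g,S}$ with the explicit stratum-wise descriptions above — in particular that $q_{G,\tau}$ sends the $G'$-face onto the $(G')_{\tau'}$-stratum and that the induced forest $\tau'$ is the one entering Lemma \ref{KL1} — and match the sign/determinant data the two sheaves carry, since the edge sets of $G'$ and of $(G')_{\tau'}$ differ and $Fr_{\tau'}^{*}$ must therefore be twisted by a canonical isomorphism of the corresponding determinant lines. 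Once these identifications are in place, the proposition is a formal consequence of Lemma \ref{KL1} and Proposition $1.8$ in \cite{KaSh}.
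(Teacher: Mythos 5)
Your proposal follows essentially the same route as the paper's own proof: both unwind $\rho^{*}\EuScript{ODM}_{g,S}$ and $\pi^{*}\EuScript{DM}_{g,S}$ stratum by stratum (using that the inclusions \eqref{inct} are transversal to the stratification), define $\mathrm{Fr}$ as the pullback $Fr_{\tau}^{*}\colon C^{\hdot}(\overline{\mathcal N}^{fr}_{G_{\tau}},\mathbb Q)\to C^{\hdot}(\overline{\mathcal M}_{G},\mathbb Q)$, and invoke Lemma \ref{KL1} both for the quasi-isomorphism on each stratum and for compatibility with the gluing/clutching variation maps. You merely spell out the bookkeeping (compatibility with $\alpha(m)$ and the determinant twists) that the paper leaves implicit; the argument is the same.
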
 

\begin{proof} Consider the pullback of the framed Deligne-Mumford DG-sheaf $\EuScript {ODM}_{g,S}$ to the diagram $\mathcal {OM}_{g,S}^{trop}\times_{OJ_{g,S}}TJ_{g,S}.$ Then the values of it on the element $(G,\tau)$ is given by the $\EuScript {ODM}_{\sigma_{G_{\tau}}^{or}}.$ Then its pullback to the digram $\mathcal {TM}_{g,S}^{trop}$ is given by the same DG-sheaf because inclusion \eqref{inct} are transversal to the stratification. The space of derived sections of the DG-sheaf $\pi^*\EuScript {DM}_{g,S}$ is given by $C^{\hdot}(\overline{\mathcal M}_G,\mathbb Q)$ For a spanning forest $\tau$ we have a morphism \ref{fr} and we take the corresponding pullback:
$$
Fr^*\colon C^{\hdot}(\overline{\mathcal N}_{G_{\tau}}^{fr},\mathbb Q)\longrightarrow C^{\hdot}(\overline{\mathcal M}_{G},\mathbb Q)
$$
This morphism defines the quasi-isomorphism of DG-sheaves by Lemma \ref{KL1}.

\end{proof} 

\begin{Def} For every even $d\in \mathbb Z$ and a non empty finite set $S$ we have a morphism of complexes:
$$
\Psi\colon \textsf H_S\textsf {OGK}_{d+1}^{\hdot} \longrightarrow \textsf H_S\textsf {GK}_{d}^{\hdot}
$$
which is defined by the rule:
\begin{equation*}
\begin{diagram}[height=3.1em,width=3.1em]
\mathbf R^{\hdot}\Gamma_c(\mathcal {TM}_{g,S}^{trop},\rho^*\EuScript {ODM}_{g,S}) & &  \rTo_{\sim}^{\mathrm {Fr}} &  &  \mathbf R^{\hdot}\Gamma_c(\mathcal {TM}_{g,S}^{trop},\pi^*\EuScript {DM}_{g,S}) &  \\
\uTo_{\mathrm {adj}}^{} & & &  & \dTo_ {\mathrm {adj}}^{}  && \\
\mathbf R^{\hdot}\Gamma_c(\mathcal {OM}_{g,S}^{trop},\EuScript {ODM}_{g,S})  & &  \rDotsto{\Psi} &  & \mathbf R^{\hdot}\Gamma_c(\mathcal {M}_{g,S}^{trop},\EuScript {DM}_{g,S})  \\
\end{diagram}
\end{equation*}

\end{Def} 
By the definition we have the following:
\begin{Lemma} For every  even integer $d$ non empty finite set $S$ and $k\geq 0$ the morphism $\Psi$ preserves the Getzler-Kapranov complexes of the weight $k:$
$$
\Psi\colon \textsf W_{k}\textsf H_S\textsf {OGK}_{d+1}^{\hdot} \longrightarrow \textsf W_k\textsf H_S\textsf {GK}_{d}^{\hdot}.
$$

\end{Lemma}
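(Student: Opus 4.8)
The plan is to check that every arrow entering the definition of $\Psi$ is compatible with the weight filtrations, and then to rerun the defining square of $\Psi$ with $\EuScript{ODM}_{g,S}$ and $\EuScript{DM}_{g,S}$ replaced throughout by their weight-$k$ graded pieces. I would begin by recalling the shape of the weight sheaves: over each stratum the sheaves $\EuScript W_k\EuScript{DM}_{g,S}$ and $\EuScript W_k\EuScript{ODM}_{g,S}$ are obtained by applying the canonical truncation functor $\tau_{\leq k}$ to the defining cochain complexes $\bigotimes_{v\in V(G)}C^{\hdot}(\overline{\mathcal M}_{w(v),n(v)},\mathbb Q)$ and $\bigotimes_{v\in V(G)}C^{\hdot}(\overline{\mathcal N}^{fr}_{w(v),n_{in}(v),n_{out}(v)},\mathbb Q)$ respectively. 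This is legitimate because $\overline{\mathcal M}$ and $\overline{\mathcal N}^{fr}$ are smooth and proper (the latter up to the homotopy equivalence of Lemma \ref{KL1}), so that cohomology in degree $k$ is pure of weight $k$ and the canonical truncation coincides with the weight truncation; this is the computation of \cite{AK} for the Deligne--Mumford sheaves, and the framed case is formally identical. In particular $\mathrm{Gr}^{\EuScript W}_k\EuScript{ODM}_{g,S}=\mathrm{Cone}(\EuScript W_{k-1}\to\EuScript W_k)$ and $\mathrm{Gr}^{\EuScript W}_k\EuScript{DM}_{g,S}$ are built functorially out of $\tau_{\leq k}$ and $\mathrm{Cone}$.

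Next I would treat the three arrows of the defining square separately. For the pullbacks $\pi^{*}=\mathrm{Res}(p)$ and $\rho^{*}=q^{*}\circ\mathrm{Res}(z)$ compatibility is essentially tautological: the weight sheaves are defined vertex-locally, restriction along $p$ and $z$ merely reindexes them over $TJ_{g,S}$, and the transversal closed inclusion $q$ of \eqref{inct} pulls the combinatorial sheaf back to the same combinatorial sheaf; none of these operations changes cohomological degree, so they commute with $\tau_{\leq k}$ and with $\mathrm{Cone}$. For the middle arrow, $\mathrm{Fr}$ of \eqref{id} is a genuine morphism of DG-sheaves on $\mathcal{TM}^{trop}_{g,S}$, given vertexwise by the pullbacks $\mathrm{Fr}^{*}_{\tau}\colon C^{\hdot}(\overline{\mathcal N}^{fr}(G_{\tau}),\mathbb Q)\to C^{\hdot}(\overline{\mathcal M}(G),\mathbb Q)$ along the homotopy equivalences of Lemma \ref{KL1}; functoriality of $\tau_{\leq k}$ on complexes forces it to restrict to $\rho^{*}\EuScript W_k\EuScript{ODM}_{g,S}\to\pi^{*}\EuScript W_k\EuScript{DM}_{g,S}$ and, passing to cones, to $\mathrm{Fr}_k\colon\rho^{*}\mathrm{Gr}^{\EuScript W}_k\EuScript{ODM}_{g,S}\to\pi^{*}\mathrm{Gr}^{\EuScript W}_k\EuScript{DM}_{g,S}$, which is still a quasi-isomorphism since $\tau_{\leq k}$ and $\mathrm{Cone}$ preserve quasi-isomorphisms.

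Finally the adjunction morphisms $\mathrm{adj}$ are natural transformations built from the units and counits of $(\pi_!,\pi^*)$ and $(\rho_!,\rho^*)$, hence restrict to the subsheaves $\EuScript W_k$ and the quotients $\mathrm{Gr}^{\EuScript W}_k$; and $\mathbf R\Gamma_{c}$ of the two diagrams is exact and sends the weight triangles $\EuScript W_{k-1}\to\EuScript W_k\to\mathrm{Gr}^{\EuScript W}_k$ on either side to distinguished triangles compatibly under $\Psi$. Putting these observations together, the defining square of $\Psi$ goes through verbatim with $\EuScript{ODM}_{g,S}$, $\EuScript{DM}_{g,S}$ replaced everywhere by $\mathrm{Gr}^{\EuScript W}_k\EuScript{ODM}_{g,S}$, $\mathrm{Gr}^{\EuScript W}_k\EuScript{DM}_{g,S}$, and it produces the asserted morphism $\Psi\colon\textsf W_{k}\textsf H_S\textsf{OGK}_{d+1}^{\hdot}\to\textsf W_k\textsf H_S\textsf{GK}_{d}^{\hdot}$, compatible with the full $\Psi$ and with the filtration maps between successive weights.

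The step I expect to need the most care is the claim that the \emph{derived} left Kan extensions entering the adjunctions — $\mathbf R\mathrm{LKan}(p)$ and $\mathbf R\mathrm{LKan}(z)$, together with $\mathbf R q_*$ — commute with $\tau_{\leq k}$; here one uses that the slice and coslice categories of $p$ (and of $z$) are finite groupoids, so these Kan extensions are honestly exact and compute by averaging, and that pushforward along the transversal closed inclusion $q$ is exact and degree-preserving. Everything else reduces to the purity of $H^{\hdot}(\overline{\mathcal M})$ and $H^{\hdot}(\overline{\mathcal N}^{fr})$ and to bare functoriality of $\tau_{\leq k}$ and $\mathrm{Cone}$, so that the lemma really is, as the phrase ``by the definition'' indicates, a formal consequence of the construction of $\Psi$.
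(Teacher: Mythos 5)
Your proposal is correct and follows exactly the route the paper intends: the paper's own ``proof'' consists of the single sentence that the claim is obvious by the definition of $\Psi$, and your argument is precisely the verification that each arrow in the defining square (the two adjunctions, the transversal pullback $q^{*}$, and the vertexwise comparison $\mathrm{Fr}$) is natural in the coefficient sheaf and compatible with the truncations $\EuScript W_k$ and their cones. The only point where you go beyond what the paper records --- exactness of $\mathrm{LKan}(p)$, $\mathrm{LKan}(z)$ and $q_*$ so that they commute with $\tau_{\leq k}$ --- is exactly the point the paper itself flags and asserts without proof in its preliminaries, so your treatment is, if anything, more careful than the source.
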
 

\begin{proof} 

The claims is obvious.

\end{proof} 

Recall that there is a canonical morphism $\textsf H_S\textsf {GC}_d^{\hdot}\longrightarrow \textsf W_0\textsf H_S\textsf {GC}_d^{\hdot}$ (See \cite{AK}). This morphism is a quasi-isomorphism. Hence we have the following:
\begin{Cor} The following square commutes:
\begin{equation}
\begin{diagram}[height=2.3em,width=2.3em]
 \textsf W_0\textsf H_S\textsf {OGK}_{2d+1}^{\hdot} & &  \rTo^{\Psi}_{} &  & \textsf W_0 \textsf H_S\textsf {GK}_{2d}^{\hdot}   &  \\
\uTo_{\sim}^{} & & &  & \uTo_ {\sim}^{}  && \\
\textsf H_S\textsf {OGC}_{2d+1}^{\hdot} & &  \rTo^{\sim}_{} &  &  \textsf H_S\textsf {GC}_{2d}^{\hdot}\\
\end{diagram}
\end{equation}
At the bottom we have a morphism from \cite{AWZ}.
\end{Cor}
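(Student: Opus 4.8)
The plan is to exhibit the square as the restriction to the weight-zero locus of the larger diagram through which $\Psi$ was constructed --- the one built from the moduli of tropical curves with spanning forests $\mathcal{TM}_{g,S}^{trop}$ --- and then to invoke the fact that the lower horizontal arrow, M. Živković's morphism of \cite{AWZ} (following \cite{Ziv}), is by construction the map induced by the combinatorial correspondence $OJ_{g,S}\leftarrow TJ_{g,S}\to J_{g,S}$. The point will be that, once the weight-zero complexes are identified with the compactly supported cohomology of constant sheaves on the weight-zero tropical strata, both rows of the square become literally the same pull--push along the correspondence $OM_{g,S}^{trop}\leftarrow TM_{g,S}^{trop}\to M_{g,S}^{trop}$.

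First I would check that $\mathrm{Gr}^{\EuScript W}_0\EuScript{ODM}_{g,S}$ is supported on the open locus $j\colon OM_{g,S,w=0}^{trop}\hookrightarrow OM_{g,S}^{trop}$ of graphs with all weights zero and that there it is quasi-isomorphic to the constant sheaf $\underline{\mathbb Q}$: at such a graph $G$ the stalk is $\bigotimes_{v\in V(G)}\mathrm{Gr}^W_0 C^{\hdot}(\overline{\mathcal N}^{fr}_{0,n_{in}(v),n_{out}(v)},\mathbb Q)\cong\mathbb Q$ (each factor being genus zero, hence with weight-zero cohomology concentrated in $H^0$), and the gluing-pullback (variation) maps identify these lines along morphisms of graphs; the analogous statement for $\EuScript{DM}_{g,S}$ is in \cite{AK}. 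Using Lemma \ref{KL1} together with the transversality of the inclusions \eqref{inct} to the stratification, I would then deduce the same for $\rho^*\EuScript{ODM}_{g,S}$ and $\pi^*\EuScript{DM}_{g,S}$ over the preimages of the weight-zero loci, and that the quasi-isomorphism $\mathrm{Fr}$ of \eqref{id} restricts there to the identity of $\underline{\mathbb Q}$, since $Fr_\tau^*$ sends $1$ to $1$. By Proposition \ref{alt1} and the Cousin resolution this identifies the left vertical map $j_!$ with the natural map $\mathbf R^{\hdot}\Gamma_c(OM_{g,S,w=0}^{trop},\underline{\mathbb Q})\to\mathbf R^{\hdot}\Gamma_c(OM_{g,S}^{trop},\mathrm{Gr}^{\EuScript W}_0\EuScript{ODM}_{g,S})$, up to the shift of \eqref{ogc}, and symmetrically on the non-oriented side, cf. \cite{CGP2}.

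With these identifications the top arrow unwinds, by reading off the defining diagram of $\Psi$, as the composite of the pullback along $TM_{g,S,w=0}^{trop}\to OM_{g,S,w=0}^{trop}$, the comparison $\mathrm{Fr}$ --- now the identity --- and the trace along $TM_{g,S,w=0}^{trop}\to M_{g,S,w=0}^{trop}$; the trace exists because the slice categories of $p$ are finite groupoids, so $\pi_!\cong\pi_*$ and the counit $\pi_!\pi^*\to\mathrm{Id}$ induces it on $\mathbf R^{\hdot}\Gamma_c$. This is exactly the map on compactly supported cochains attached to the weight-zero correspondence of tropical moduli spaces, which is the combinatorial formula defining Živković's morphism in \cite{AWZ}. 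Hence the square commutes.

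The genuinely delicate point, and where I expect the real work to lie, is the bookkeeping of orientations and degree shifts: one has to track how an orientation $or\in\det V(G)$ of a graph-complex generator transforms under $p$ (whose coslice fibres, being finite groupoids, reproduce the sum over spanning forests together with its signs) and under $z$ (which replaces $G$ by $G_\tau$), and to reconcile the degree conventions of the odd parameter $2d+1$ with those of the even parameter $2d$ --- i.e.\ to see that the $\det(E(G))$-versus-$\det(V(G))$ normalisations in \eqref{ogc} and in \cite{AK} are transported correctly by $q$ of \eqref{inct}. Those checks are precisely the ones already carried out in \cite{Ziv} and \cite{AWZ}; since $\Psi$ was built in order to geometrise Živković's map, the only content of the corollary is that this geometrisation restricts on the weight-zero part to the original combinatorial morphism, which is immediate from the construction.
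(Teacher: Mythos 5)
Your proposal follows essentially the same route as the paper, whose entire proof is the one-line assertion that the claim ``trivially follows from the definition of the \v{Z}ivkovi\'c morphism'': you simply carry out in detail the unwinding (restriction of the correspondence $OM_{g,S}^{trop}\leftarrow TM_{g,S}^{trop}\to M_{g,S}^{trop}$ to the weight-zero locus, identification of the weight-zero sheaves with $\underline{\mathbb Q}$, and the orientation/degree bookkeeping deferred to \cite{Ziv} and \cite{AWZ}) that the paper leaves implicit. Your write-up is a faithful, and considerably more explicit, version of the intended argument.
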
 

\begin{proof} 

The claims trivially follows from the definition of the Živković morphism.

\end{proof} 

\begin{Th} For every even $d$ the Živković morphism:
$$\Psi\colon \textsf H_S\textsf {OGK}_{d+1}^{\hdot} \overset{\sim}\longrightarrow \textsf H_S\textsf {GK}_{d}^{\hdot}$$
 is a quasi-isomorphism. 
\end{Th}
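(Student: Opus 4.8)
The plan is to read the statement off the commutative square that \emph{defines} $\Psi$. Its top edge
$\mathrm{Fr}\colon \mathbf R^{\hdot}\Gamma_c(\mathcal {TM}_{g,S}^{trop},\rho^*\EuScript {ODM}_{g,S})\to \mathbf R^{\hdot}\Gamma_c(\mathcal {TM}_{g,S}^{trop},\pi^*\EuScript {DM}_{g,S})$ is already known to be a quasi-isomorphism (the Proposition establishing \eqref{id}, via Lemma~\ref{KL1}), so it suffices to show that the two vertical adjunction maps are quasi-isomorphisms genus by genus; the claim for the full product then follows termwise. Concretely, one must prove: (a) the counit-type morphism $\mathbf R^{\hdot}\Gamma_c(\mathcal {TM}_{g,S}^{trop},\pi^*\EuScript {DM}_{g,S})\to \mathbf R^{\hdot}\Gamma_c(\mathcal {M}_{g,S}^{trop},\EuScript {DM}_{g,S})$ is a quasi-isomorphism; and (b) the unit-type morphism $\mathbf R^{\hdot}\Gamma_c(\mathcal {OM}_{g,S}^{trop},\EuScript {ODM}_{g,S})\to \mathbf R^{\hdot}\Gamma_c(\mathcal {TM}_{g,S}^{trop},\rho^*\EuScript {ODM}_{g,S})$ is a quasi-isomorphism. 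Both will be reduced to a single combinatorial acyclicity, the one underlying M. Živković's trick.

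For (a): by the base-change/projection formula for the fibered correspondence \eqref{corr2} one has $\mathbf R^{\hdot}\Gamma_c(\mathcal {TM}_{g,S}^{trop},\pi^*\EuScript {DM}_{g,S})\cong \mathbf R^{\hdot}\Gamma_c(\mathcal {M}_{g,S}^{trop},\pi_!\pi^*\EuScript {DM}_{g,S})$, and the map in question is induced by the counit $\pi_!\pi^*\EuScript {DM}_{g,S}\to \EuScript {DM}_{g,S}$ of DG-constructible sheaves. Since, as noted in the paper, the slice and coslice categories of $p\colon TJ_{g,S}\to J_{g,S}$ are finite groupoids, $\pi_!=\mathrm{LKan}(p)$ is computed stratum by stratum, and on the stratum $S_G$ of $\mathcal {M}_{g,S}^{trop}$ attached to a graph $G$ the stalk of $\pi_!\pi^*\EuScript {DM}_{g,S}$ is the $\mathrm{Aut}(G)$-(co)invariants of $\bigoplus_{\tau} C^{\hdot}(\overline{\mathcal M}_G,\mathbb Q)\otimes \mathrm{sgn}(\tau)$, the sum ranging over weighted spanning forests $\tau$ of $G$, where $\mathrm{sgn}(\tau)$ records the comparison of the orientation lines $\det E(G)$ and $\det V(G_\tau)$ under the correspondence $(G,\tau)\mapsto G_\tau$ of \eqref{inct} (the $|S|$-discrepancy being absorbed by the shift $g(1-d)-|S|$ and the codimension of the inclusion $q$). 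The reduction is thus to the assertion that, for every connected weighted $S$-marked graph $G$, the complex $\bigoplus_\tau \mathbb Q\,\mathrm{sgn}(\tau)$ of its weighted spanning forests, with the differential induced by the contraction maps of $TJ_{g,S}$, is acyclic except in a single degree, in which it is one-dimensional and generates the orientation line of $C^{\hdot}(\overline{\mathcal M}_G,\mathbb Q)$. This is exactly Živković's lemma (a matrix-tree / discrete-Morse-theoretic statement, see \cite{Ziv} and \cite{AWZ}), now applied in the presence of weight decorations; granted it, the counit is a stalk-wise quasi-isomorphism and (a) follows.

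For (b): the argument is formally dual. Writing $\rho_!=\mathrm{LKan}(z)\circ q_*$ and using that $q$ is a closed inclusion transversal to the stratification $\mathcal S_{or}$ (so $q_*$ preserves the DG-constructible sheaves at issue and $q^*q_*\simeq \mathrm{Id}$ on them, as in the proof of \eqref{id}), the morphism of (b) is identified with the adjunction morphism attached to $(\mathrm{LKan}(z),\mathrm{Res}(z))$. The fibers of $z\colon TJ_{g,S}\to OJ_{g,S}$, $(G,\tau)\mapsto G_\tau$, are again finite groupoids — recovering $(G,\tau)$ from $G_\tau$ amounts to contracting the two-output valence-two vertices $\dEd$ and recording which edges of $G_\tau$ came from them — so the same stratum-by-stratum reduction applies and, after matching the orientation lines ($\det V(G)$ on the oriented side, $\det E(G)$ on the Getzler–Kapranov side), leads once more to Živković's lemma. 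Hence (b). Combining (a), (b) and the quasi-isomorphism $\mathrm{Fr}$ shows that $\Psi$ is a quasi-isomorphism on each genus-$g$, $S$-marked summand, hence on the product; together with the preceding Lemma (preservation of the weight filtration) this also yields Corollaries~\ref{ziv1} and \ref{ziv2}.

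The main obstacle is the honest proof of Živković's acyclicity lemma in this decorated, sheaf-theoretic incarnation: one has to make rigorous that $\mathrm{LKan}(p)$ and $\mathrm{LKan}(z)$ are computed fiberwise by finite-groupoid (co)invariants with no higher derived terms (the left-exactness flagged in \S2.2), and one has to carry out the $\det E(G)$ versus $\det V(G_\tau)$ sign bookkeeping so that it is consistent both with the contraction differentials and with the $\mathrm{Fr}$-identification of \eqref{id}. Once this is settled, the rest of the proof is formal.
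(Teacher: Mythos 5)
Your strategy differs from the paper's. The paper's own argument (which is only a sketch, with details deferred) reduces to $d=0$, replaces $\textsf{OGK}^{\bullet}_{d+1}$ by a skeletal version via Lemma~3.6 of \cite{AV}, and then filters the total complex by the number of vertices, computing the associated graded as in \cite{AWZ}. You instead try to read the theorem off the defining correspondence by showing that each of the two adjunction legs is separately a quasi-isomorphism. That is the right target --- it is exactly what the roof $\textsf{GC}\leftarrow\textsf{TGC}\rightarrow\textsf{OGC}$ of \cite{Ziv}, \cite{AWZ} accomplishes for the underlying graph complexes --- but your reduction of it to a ``stalk-wise'' statement has a genuine gap.

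The counit $\pi_!\pi^*\EuScript{DM}_{g,S}\to\EuScript{DM}_{g,S}$ is \emph{not} a stalk-wise quasi-isomorphism. Since the slice categories of $p\colon TJ_{g,S}\to J_{g,S}$ are finite groupoids, the value of $\pi_!\pi^*\EuScript{DM}_{g,S}$ over the stratum of a fixed graph $G$ is the $\mathrm{Aut}(G)$-coinvariants of $\bigoplus_{\tau}C^{\bullet}(\overline{\mathcal M}_G,\mathbb Q)$, the sum running over \emph{all} weighted spanning forests $\tau$ of $G$; there is no differential relating distinct $\tau$'s at fixed $G$, because the morphisms of $TJ_{g,S}$ contract edges and hence change $G$. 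So the object you invoke --- ``the complex $\bigoplus_\tau\mathbb Q\,\mathrm{sgn}(\tau)$ with the differential induced by the contraction maps,'' acyclic except in one degree where it is one-dimensional --- is not defined at fixed $G$, and the statement it is meant to encode is not one you can expect: the collection of forests of a graph forms a matroid complex, which is in general a wedge of \emph{many} top-dimensional spheres (their number a Tutte-polynomial evaluation), not a single one. The cancellation that makes both legs quasi-isomorphisms only occurs in the total complex computing $\mathbf R^{\bullet}\Gamma_c$, after filtering by the number of vertices (equivalently, by the number of edges outside the forest, i.e.\ the number of two-output bivalent vertices created on the oriented side); on the associated graded the differential only creates or annihilates those vertices and admits an explicit contracting homotopy. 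That filtration argument is precisely the content of \cite{AWZ} that the paper appeals to, and it is what must replace your steps (a) and (b). The orientation-line bookkeeping you flag is real but secondary; the missing filtration is the essential point.
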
 
\begin{proof} It is enough to prove the statement for $d=0.$ First one has to show that there is have a quasi-isomorphism between $\textsf {OGK}_{d+1}^{\hdot}$ and its skeletal version. The key ingredient here is Lemma $3.6$ from \cite{AV}. We consider the filtration by the number of vertices and pass to the associated graded complex.  Then analogously to \cite{AWZ} one has a result. The details shall appear elsewhere. 

\end{proof} 
Recall that for each $i$ the compactly supported cohomology $H_c^{i}(\mathcal M_{g,S},\mathbb Q)$ carries P. Deligne's mixed Hodge structure \cite{DelH} \cite{DelH2}. In particular there is a natural weight filtration $W_0H_c^{i}(\mathcal M_{g,S},\mathbb Q)\subset \dots \subset W_{i-1}H_c^{i}(\mathcal M_{g,S},\mathbb Q)\subset H_c^{i}(\mathcal M_{g,S},\mathbb Q)$ with the corresponding $k$-quotient denoted by $\mathrm {gr}_k^WH_c^{i}(\mathcal M_{g,S},\mathbb Q).$ Applying Theorem $3.4.2$ from \cite{AK} we get the following:
\begin{Cor}\label{we} For every even $d\in \mathbb Z$ a non empty finite set $S$ we have the following isomorphisms:
$$
H^{\hdot}(\textsf H_S\textsf {OGK}_{d+1})\cong \prod_{g\geq 0\,2g+|S|-2>0}^{\infty} H_c^{\hdot+2dg-|S|}(\mathcal M_{g,S},\mathbb Q) 
$$
$$
H^{\hdot}(\textsf W_k \textsf H_S\textsf {OGK}_{d+1})\cong \prod_{g\geq 0\,2g+|S|-2>0}^{\infty} \mathrm {gr}_k^WH_c^{\hdot+2dg-|S|}(\mathcal M_{g,S},\mathbb Q) 
$$
\end{Cor}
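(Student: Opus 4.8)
The plan is to obtain both isomorphisms formally, by transporting the question through the Živković quasi-isomorphism and then quoting the known cohomology of the unoriented Getzler--Kapranov complex. First I would apply Theorem~\ref{ziv} in the degree-preserving form established just above: for even $d$ the map $\Psi$ is a quasi-isomorphism $\textsf H_S\textsf{OGK}_{d+1}^{\hdot}\overset{\sim}{\longrightarrow}\textsf H_S\textsf{GK}_{d}^{\hdot}$, so $H^{\hdot}(\textsf H_S\textsf{OGK}_{d+1})\cong H^{\hdot}(\textsf H_S\textsf{GK}_{d})$. By the Lemma preceding this corollary, $\Psi$ is moreover compatible with the weight filtrations and restricts to a morphism $\textsf W_k\textsf H_S\textsf{OGK}_{d+1}^{\hdot}\to\textsf W_k\textsf H_S\textsf{GK}_{d}^{\hdot}$; since it is built out of the sheaf-level quasi-isomorphism $\mathrm{Fr}\colon\rho^{*}\EuScript{ODM}_{g,S}\overset{\sim}{\to}\pi^{*}\EuScript{DM}_{g,S}$, which automatically respects the graded pieces $\mathrm{Gr}^{\EuScript W}_k$, the same argument that proves Theorem~\ref{ziv} shows this restriction is again a quasi-isomorphism, giving $H^{\hdot}(\textsf W_k\textsf H_S\textsf{OGK}_{d+1})\cong H^{\hdot}(\textsf W_k\textsf H_S\textsf{GK}_{d})$ for every $k\geq 0$.

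Next I would feed in \cite[Theorem 3.4.2]{AK}: for each $g$ with $2g-2+|S|>0$ the derived compactly supported sections of $\EuScript{DM}_{g,S}$ on $\mathcal M_{g,S}^{trop}$ compute $H_c^{\hdot}(\mathcal M_{g,S},\mathbb Q)$, and the combinatorial weight filtration induces Deligne's weight filtration, so that $\mathbf R^{\hdot}\Gamma_c(\mathcal M_{g,S}^{trop},\mathrm{Gr}^{\EuScript W}_k\EuScript{DM}_{g,S})$ computes $\mathrm{gr}^{W}_k H_c^{\hdot}(\mathcal M_{g,S},\mathbb Q)$ (this is the content restated on the unoriented side in Corollary~\ref{ziv1}). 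Because each of the complexes in play is an infinite product over $g$ and products of $\mathbb Q$-vector spaces are exact, $H^{n}$ commutes with $\prod_{g}$, and assembling the per-$g$ statements produces exactly the two displayed formulas --- the first from the unfiltered part of \cite[Theorem 3.4.2]{AK}, the second from its weight-graded refinement.

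The single point that will need care --- and the only step I would call a potential obstacle, though I expect none that is genuine --- is the grading bookkeeping. One has to combine the internal shift $\mathbf R^{\hdot+g(1-(d+1))-|S|}\Gamma_c$ fixed in the definition of $\textsf H_S\textsf{OGK}_{d+1}^{\hdot}$, the matching shift in $\textsf H_S\textsf{GK}_{d}^{\hdot}$ under which $\Psi$ is degree-preserving, and the shift between $\mathbf R^{\hdot}\Gamma_c(\mathcal M_{g,S}^{trop},\EuScript{DM}_{g,S})$ and $H_c^{\hdot}(\mathcal M_{g,S},\mathbb Q)$ recorded in \cite{AK}, and check that they sum to the exponent $\hdot+2dg-|S|$ appearing in the statement, uniformly in $k$ for the weight pieces. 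In particular one should be careful to invoke \cite[Theorem 3.4.2]{AK} and Corollary~\ref{ziv1} with the $d$-versus-$2d$ normalization and the $|S|$-twist correctly lined up; once that is done the corollary follows immediately.
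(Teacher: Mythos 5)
Your proposal follows exactly the paper's (very terse) argument: the paper derives the corollary by combining the Živković quasi-isomorphism theorem stated immediately above, the lemma on preservation of the weight-$k$ subcomplexes, and Theorem 3.4.2 of \cite{AK}, which is precisely your route. Your additional attention to the degree-shift bookkeeping is a sensible refinement of what the paper leaves implicit, but it does not change the approach.
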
 

\begin{proof} 

The claims is obvious.

\end{proof} 
\begin{remark}[Non-marked version of $\textsf H_S\textsf {OGK}^{\hdot}$]\label{Nonm}
By gluing disks to the outputs of the framed surface one obtains a morphism (here we also stabilise the resulting framed surface):
\begin{equation}\label{for}
\Pi_j\colon \overline{\mathcal N}^{fr}_{g,I,J}\longrightarrow \overline{\mathcal N}^{fr}_{g,I,J\setminus j}
\end{equation} 
Skew-symmetrising output boundaries one defines a hairy oriented Getzler-Kapranov complex $\textsf H_{\geq 1}\textsf {OGK}^{\hdot}_{1}$ (cf. \cite{AK}). Using compatibility between the operation $\Pi$ and an operation $\pi\colon \overline{\mathcal M}_{g,p}\longrightarrow \overline{\mathcal M}_{g,p-1}$ one shows the quasi-isomorphism between $\textsf H_{\geq 1}\textsf {OGK}^{\hdot}_{1}$ and a hairy Getzler-Kapranov complex $\textsf H_{\geq 1}\textsf {GK}^{\hdot}_0.$ 
Applying Theorem from \cite{AK} one computes (in genera $\geq 2$):
$$
\textsf H_{\geq 1}\textsf {OGK}^{\hdot}_{1}\cong \prod_{g\geq 2} C_c^{\hdot-1}(\mathcal M_g,\mathbb Q) 
$$
It is reasonable to call this complex non-labelled oriented Getzler-Kapranov complex and denote by $\textsf {OGK}_1^{\hdot}.$ Note that using Corollary 
\ref{we}  one reproves Theorem $3$ from \cite{AWZ} and gets the quasi-isomorphism between the oriented graph complex $\textsf {OGC}_1^{\hdot}$ and the Kontsevich graph complex $\textsf {GC}_0^{\hdot}$ \cite{Will2}.

\end{remark}

\section{The Merkulov-Willwacher conjecture} 

\subsection{Deformation complexes of properads} 

We will denote by $\textsf {Frob}$ we will denote the properad of Frobenius algebras i.e. algebras over this properad are commutative Frobenius algebras (all operations in this corresponding properad are given by $\mathbb Q$) with identical morphisms between them. We will also use a notation $\textsf {Frob}^{\diamond}$ for the properad of involutive Frobenius algebras (all operations are $\mathbb Q$, but all higher compositions are zero). These properads are equipped with a canonical morphism $\textsf {Frob}\longrightarrow \textsf {IFrob}.$ We will also use a notation $\textsf {AC}$ for the properad from \cite{AWZ}. Let $\textsf C$ be an augmented properad denote by $\Omega$ the properadic bar construction \cite{Vall}, which is defined as DG-properad structure on the free properad generated by the augmentation ideal of $\textsf C.$ We will use the following standard notations: $\Omega(\textsf {IFrob}^*):=\textsf {hoLieB}$ and $\Omega(\textsf {Frob}^*):=\textsf {hoLieB}^{\diamond}$
The properad $\textsf {hoLieB}$ (resp. $\textsf {hoLiB}^{\diamond}$) is a minimal resolution for the properad $\textsf {LieB}$ (resp.  $\textsf {LieB}^{\diamond}$) which controls Lie bialgebras (resp. involutive Lie bialgebras)\footnote{When we omit a subscript $d$ we tacitly assume that $d=0.$} \cite{CMW}. We have the corresponding Koszul dual morphism $\textsf {hoLieB}\longrightarrow \textsf {hoLieB}^{\diamond}$ to the morphism between Frobenius properads.

For a morphism of properad $f\colon \Omega(\textsf C)\longrightarrow \textsf P$ we will consider the corresponding deformation complex \cite{MV1}\cite{MV2} which is as mere graded vector spaces is:
$$
\mathrm {Def}(\Omega(\textsf C)\longrightarrow \textsf P):=\prod_{n,m} \mathrm {Hom}_{\Sigma_n\times \Sigma_m}(\textsf C(n,m),\textsf P(n,m))
$$
The right hand side is naturally a properad (a convolution properad) and hence by \cite{MK} carries a natural Lie bracket, such that $f$ is a Maurer-Cartan element. We assume that the corresponding differential in the deformation complex is twisted by $f.$ 

\subsection{Oriented Getzler-Kapranov complex as deformation complex} We will denote by $\overline{\mathcal N}^{fr}$ the DG-coproperad in with a space of $(n,m)$-operations given by the chains of moduli space of stable framed surfaces of an arbitrary genus i.e with $n+m$-framings:
$$
\overline{\textsf N}^{fr}(n,m)=\prod_{g\geq o} C^{\hdot}(\overline{\mathcal N}^{fr}_{g,[m],[n]},\mathbb Q)
$$
The composition law is given by gluing morphisms \eqref{glue}. Note that this DG-coproperad is equivalent to the DG-coproperad, underlying the DG-PROP associated with the Deligne-Mumford modular cooperad $C^{\hdot}(\overline{\mathcal M},\mathbb Q)$ \cite{HV}. Applying the formality result \cite{FORM} one shows that the coproperad $\overline{\mathcal N}^{fr}$ is formal. Note that the classical result about TQFTs states that there is a morphism of DG-coproperads:
\begin{equation}\label{tqfts}
\star\colon \textsf {Frob}^*\overset{\sim}{\longrightarrow} H^0(\overline{\mathcal N}^{fr},\mathbb Q)\hookrightarrow H^{\hdot}(\overline{\mathcal N}^{fr},\mathbb Q)\cong \overline{\textsf {N}}^{fr}
\end{equation} 
We will use the following notation
$$
\textsf B_g \textsf H\textsf H_S\textsf {OGK}^{\hdot}
$$
for the part of  $\mathrm {Def}(\Omega(\textsf {AC}^*)\overset{\star}{\longrightarrow} \Omega(\overline{\textsf N}^{fr}))$ which consists of decorated graphs of genus $g$ with $S$-marking. Following \cite{AWZ} the part of the deformation complex  $\mathrm {Def}(\Omega(\textsf {AC}^*)\overset{}{\longrightarrow} \textsf{hoLieB})$ which consists of graphs of genus $g$ with $S$-markings will be denoted by $\textsf B_g\textsf H\textsf H_S\textsf {OGC}^{\hdot}.$ We also employ analogous for the oriented Getzler-Kapranov complexes. Applying the functoriality of deformation complexes to \eqref{tqfts} for every $g\geq 0$ and a non-empty finite set $S$ such that $2g+|S|-2>0$ we get the following morphism:
$$
\textsf B_g\textsf H\textsf H_S\textsf {OGC}^{\hdot}\longrightarrow \textsf B_g\textsf H\textsf H_S\textsf {OGK}^{\hdot}
$$
We will show that this morphism induces the injection in cohomology. 
We have the following:
\begin{Th} There is a canonical morphism:
\begin{equation}\label{mor2}
 \textsf B_g\textsf H_S\textsf {OGK}^{\hdot}_1{\longrightarrow}\textsf B_g\textsf H\textsf H_S\textsf {OGK}^{\hdot},
 \end{equation} 
which induces injection in cohomology, such that the following diagram commutes:
\begin{equation}
\begin{diagram}[height=2.3em,width=2.3em]
 H^{\hdot}(\textsf B_g\textsf H_S\textsf {OGC}_{1}) & &  \rTo^{\sim} &  &  H^{\hdot}(\textsf B_g\textsf H\textsf H_S\textsf {OGC}) &  \\
\dTo_{}^{} & & &  & \dTo_ {}^{}  && \\
 H^{\hdot}(\textsf B_g\textsf H_S\textsf {OGK}_{1}) & &  \rInto^{}_{} &  & H^{\hdot}(\textsf B_g\textsf H\textsf H_S\textsf {OGK})
 \\
\end{diagram}
\end{equation}
\end{Th}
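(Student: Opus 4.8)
The plan is to realise both sides of \eqref{mor2} as (convolution/deformation) complexes attached to properad morphisms, and to extract the map, its compatibility, and its injectivity from the universal property of the cobar construction together with a single geometric input, namely Lemma~\ref{KL1}. First I would produce an operadic model for the source. Exactly as $\textsf H_S\textsf{GK}_d^{\hdot}$ is the (modular) Feynman transform of the Deligne--Mumford cooperad $C^{\hdot}(\overline{\mathcal M})$, one identifies $\textsf B_g\textsf H_S\textsf{OGK}_1^{\hdot}$ with the genus-$g$, $S$-marked summand of $\mathrm{Def}\big(\Omega(\textsf{AC}^*)\overset{\bar\star}{\longrightarrow}\overline{\textsf N}^{fr}\big)$, the directed analogue: here $\bar\star$ is the TQFT coproperad morphism $\star\colon\textsf{Frob}^*\to\overline{\textsf N}^{fr}$ precomposed with $\textsf{AC}^*\to\textsf{Frob}^*$, and the differential pairs the co-composition of $\textsf{AC}^*$ against the co-composition of $\overline{\textsf N}^{fr}$, which is precisely $glue^{*}$, i.e. the structure maps of the combinatorial sheaf $\EuScript{ODM}_{g,S}$; the $S$-hairs are inserted as in Remark~\ref{Nonm}. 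The one nontrivial point in this identification is Lemma~\ref{KL1}, which translates the gluing of stable framed surfaces into the clutching maps of $\overline{\mathcal M}$, so that the sheaf $\EuScript{ODM}_{g,S}$ really does compute the directed Feynman transform.

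Next I would construct \eqref{mor2} by applying the functor $\mathrm{Def}(\Omega(\textsf{AC}^*)\to-)$ to the canonical twisting morphism $\iota_{\overline{\textsf N}^{fr}}\colon\overline{\textsf N}^{fr}\to\Omega(\overline{\textsf N}^{fr})$ (the inclusion of cogenerators into the cobar), using functoriality of the deformation functor in its target. The target of the resulting map is $\mathrm{Def}\big(\Omega(\textsf{AC}^*)\to\Omega(\overline{\textsf N}^{fr})\big)=\textsf B_g\textsf H\textsf H_S\textsf{OGK}^{\hdot}$. That this is a chain map is exactly the twisting equation for $\iota_{\overline{\textsf N}^{fr}}$: the vertex-splitting differential of the source is carried to the sum of the cobar differential and of the bracket with the Maurer--Cartan element $\mu$ encoding $\Omega(\textsf{AC}^*)\to\Omega(\overline{\textsf N}^{fr})$. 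Commutativity of the square is then a diagram chase: both verticals are induced by $\star$ (on the right through its cobar, via $\textsf{IFrob}^*\to\textsf{Frob}^*\overset{\star}{\to}\overline{\textsf N}^{fr}$ and $\textsf{hoLieB}=\Omega(\textsf{IFrob}^*)\to\Omega(\overline{\textsf N}^{fr})$), and $\iota$ is natural in the coproperad, so the relevant square of twisting data commutes and applying $\mathrm{Def}(\Omega(\textsf{AC}^*)\to-)$ gives the claim; the top horizontal arrow is the quasi-isomorphism of Proposition~$9$ of \cite{AWZ} reproved in this section, and the left vertical is, after that identification, the inclusion $\textsf W_0\textsf H_S\textsf{OGK}_1^{\hdot}\hookrightarrow\textsf H_S\textsf{OGK}_1^{\hdot}$ coming from Proposition~\ref{alt1}.

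The heart of the matter is injectivity in cohomology. I would filter $\textsf B_g\textsf H\textsf H_S\textsf{OGK}^{\hdot}$ by the total number $N$ of $\overline{\textsf N}^{fr}$-decorated vertices occurring in its iterated graphs; the differential raises $N$ by $0$ (the internal differential of $\overline{\textsf N}^{fr}$) or by $1$ (the cobar differential and the bracket with $\mu$). The analogous filtration on $\textsf B_g\textsf H_S\textsf{OGK}_1^{\hdot}$ by the number of vertices is compatible with \eqref{mor2}, and on the first page the morphism becomes the inclusion of the subcomplex of those iterated graphs in which every $\overline{\textsf N}^{fr}$-slot carries a single cogenerator of $H^{\hdot}(\overline{\mathcal N}^{fr})$, with residual differential the Feynman/cobar vertex-splitting acting on $H^{\hdot}(\overline{\mathcal N}^{fr})$-decorations. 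I would then show this inclusion admits a homotopy retraction onto its image. Using $H^{\hdot}(\overline{\mathcal N}^{fr})\simeq H^{\hdot}(\overline{\mathcal M})$ (Lemma~\ref{KL1}), the ambient $E_1$-complex is the ``cobar fattening'' of the single-cogenerator part, and the retraction is the directed, properadic reflection of the Koszulness of the modular operad $H_{\hdot}(\overline{\mathcal M})$ underlying the Chan--Galatius--Payne theory: concretely it is built from the coassociativity of the $H^{\hdot}(\overline{\mathcal M})$-co-composition together with the standard bar--cobar contracting homotopy. Split injectivity on $E_1$, plus convergence of both spectral sequences (each bidegree is finite-dimensional for a fixed genus $g$), then forces \eqref{mor2} to be injective in cohomology; via Corollary~\ref{we} this exhibits $\prod_g H_c^{\hdot-|S|}(\mathcal M_{g,S})$ inside $H^{\hdot}(\textsf B_g\textsf H\textsf H_S\textsf{OGK})$.

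I expect the main obstacle to be precisely this last step. The two delicate points are (i) making the operadic identification of $\textsf B_g\textsf H_S\textsf{OGK}_1^{\hdot}$ and of the associated $E_1$-pages as bar--cobar fattenings completely precise, including the bookkeeping of orientations and $S$-hairs, and (ii) constructing the contracting homotopy in the directed setting, i.e. checking that the passage from the modular operad of $\overline{\mathcal M}$ to the $\overline{\textsf N}^{fr}$-properad through spanning forests (Lemma~\ref{KL1}) does not destroy the Koszul-type degeneration that makes the retraction exist. Everything else in the argument is formal manipulation of deformation complexes and their functoriality.
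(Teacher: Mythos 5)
Your construction of the morphism and, more importantly, your proof of injectivity both diverge from the paper's, and the injectivity step contains a genuine gap. In the paper the map \eqref{mor2} is not the formal unit of the cobar adjunction: it is induced by the geometric operation $\Pi$ of capping all inputs of the framed surfaces decorating the vertices with analytic disks (and stabilising), i.e.\ by passing from a decorated graph $G$ to $G^{\neq in}$. Injectivity is then obtained by transporting $\Pi$, via the homotopy equivalences $Fr_{\tau}$ of Lemma \ref{KL1}, to the forgetful morphisms $\pi_j\colon\overline{\mathcal M}_{g,I\sqcup J}\to\overline{\mathcal M}_{g,I\sqcup J\setminus j}$ of Deligne--Mumford stacks, and invoking the projection formula $\pi_{!}(e(\pi)\wedge\pi^{*}x)=\mathrm{const}\cdot x$ with $\mathrm{const}\neq 0$: this splits $\pi^{*}$ on cohomology vertex by vertex, and hence gives the injection. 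No Koszulness of anything is required.

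By contrast, the crux of your argument --- the homotopy retraction of the $E_1$-page onto the ``single-cogenerator'' subcomplex, derived from a ``Koszul-type degeneration'' of $H_{\hdot}(\overline{\mathcal M})$ and ``the standard bar--cobar contracting homotopy'' --- is unsubstantiated, and I do not see how to make it work. The bar--cobar contraction applies to $\Omega B\textsf P\to\textsf P$; it does not produce a retraction of $\mathrm{Def}(\Omega(\textsf{AC}^*)\to\Omega(\overline{\textsf N}^{fr}))$ onto the image of the oriented Getzler--Kapranov complex, and the (co)properad $H^{\hdot}(\overline{\mathcal M})$ is not Koszul in any sense that would supply the required contracting homotopy; if such a retraction existed formally, the map would split off a summand of an essentially acyclic fattening, which is not what happens here (the target carries genuinely more cohomology). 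You correctly flag this as the main obstacle, and it is exactly the point where your argument breaks down, whereas the paper circumvents it entirely with the Euler-class computation above. A secondary issue: you define the map by ``functoriality of $\mathrm{Def}$ in the target'' applied to $\iota\colon\overline{\textsf N}^{fr}\to\Omega(\overline{\textsf N}^{fr})$, but $\iota$ is a twisting morphism rather than a morphism of properads, so this does not literally typecheck; and without the geometric description of the map as disk-capping one cannot even set up the factorisation through $\pi_j$ that actually proves injectivity.
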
 
\begin{proof} First we construct morphism \eqref{mor2} by the following rule:
\par\medskip 
For a directed graph $G$ we denote by the $G^{\neq in}$ the graph obtained by removing all incoming external edges and "stabilising" passing vertices. Consider the following morphism
$$\Pi\colon \prod_{v\in V(G)} \coprod_{g\geq 0} \overline{\mathcal N}_{g,in(v),out(v)}^{fr}\longrightarrow \prod_{v\in V(G^{\neq in})} \coprod_{g\geq 0} \overline{\mathcal N}_{g,in(v),out(v)}^{fr}$$
which is defined by gluing an analytic disk $\mathbf D$ at all inputs of the framed surface and stabilising it \eqref{for}. The pullback along $\Pi$ induces morphism \eqref{mor2}. We claim that this morphism induces the monomorphism in the cohomology. We have the following factorisation:
\begin{equation}
\begin{diagram}[height=2.9em,width=2.4em]
  \prod_{v\in V(G)} \coprod_{g\geq 0} \overline{\mathcal N}_{g,in(v),out(v)}^{fr} & &  \rTo^{\Pi} &  &  \prod_{v\in V(G^{\neq in})} \coprod_{g\geq 0} \overline{\mathcal N}_{g,in(v),out(v)}^{fr}&  \\
\dTo_{\sim}^{} & & &  & \dTo_ {\sim}^{}  && \\
 \prod_{v\in V(G)} \coprod_{g\geq 0} \overline{\mathcal M}_{g,in(v)\sqcup out(v)}^{fr} & &  \rTo^{\pi}_{} &  & \prod_{v\in V(G^{\neq in})} \coprod_{g\geq 0} \overline{\mathcal M}_{g,in(v)\sqcup out(v)}^{fr} \\
\end{diagram}
\end{equation}
Vertical arrows are defined by \eqref{fr}.\footnote{The spaces in the diagram are spaces of operations in the corresponding properad associated with a graph $G.$} Where: 
$$
\pi_{j}\colon \overline{\mathcal M}_{g,I\sqcup J}\longrightarrow \overline{\mathcal M}_{g,I\sqcup J\setminus j}
$$
is a morphism between Deligne-Mumford moduli stack which forgets markings and stabilise the resulting curve \cite{KN}. It is enough to show that the pullback of the morphism:
is injective in the cohomology. This follows from the fact that $\pi_{k+n!}(e(\pi_{k+n})\wedge \pi^*_{k+n})=\mathrm {id}\circ {const},$ where $e(\pi_{k+n})$ is the corresponding Euler class and $const$ is a non zero number. Hence we shown that \eqref{mor2} is injective in the cohomology.

\end{proof}

Hence we get the following:

\begin{Cor}\label{imp3} For every $g\geq 0$ and non empty finite set $S$ such that $2g+|S|-2>0$ we have the injective morphism:
$$
H^{\hdot}(\textsf B_g\textsf H\textsf H_S\textsf {OGC})\longrightarrow H^{\hdot}(\textsf B_g\textsf H\textsf H_S\textsf {OGK})
$$
\end{Cor}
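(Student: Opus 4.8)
The plan is to derive this corollary by composing the two injective maps established earlier in the section. First I would recall the commutative square from the preceding theorem: there is a canonical morphism $\textsf B_g\textsf H_S\textsf {OGK}^{\hdot}_1\longrightarrow \textsf B_g\textsf H\textsf H_S\textsf {OGK}^{\hdot}$ which is injective in cohomology, fitting into a diagram whose top row $H^{\hdot}(\textsf B_g\textsf H_S\textsf {OGC}_{1})\overset{\sim}{\longrightarrow}H^{\hdot}(\textsf B_g\textsf H\textsf H_S\textsf {OGC})$ is an isomorphism (this identification being Proposition $9$ of \cite{AWZ}, via the quasi-isomorphism between the Getzler-Kapranov-type deformation complex $\mathrm{Def}(\Omega(\textsf {AC}^*)\to\textsf{hoLieB})$ and the oriented graph complex). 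The left vertical map $H^{\hdot}(\textsf B_g\textsf H_S\textsf {OGC}_{1})\to H^{\hdot}(\textsf B_g\textsf H_S\textsf {OGK}_{1})$ is the map induced by the inclusion $j_!$ of the open locus of weight-zero graphs, which by Proposition \ref{alt1} is a quasi-isomorphism onto $\textsf W_0\textsf H_S\textsf {OGK}_1^{\hdot}$; in particular it is injective.

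Second, I would assemble the chain of maps
\begin{equation*}
H^{\hdot}(\textsf B_g\textsf H\textsf H_S\textsf {OGC})\xleftarrow{\ \sim\ }H^{\hdot}(\textsf B_g\textsf H_S\textsf {OGC}_{1})\hookrightarrow H^{\hdot}(\textsf B_g\textsf H_S\textsf {OGK}_{1})\hookrightarrow H^{\hdot}(\textsf B_g\textsf H\textsf H_S\textsf {OGK}),
\end{equation*}
where the last arrow is the injection of the preceding theorem. Commutativity of the theorem's square guarantees that this composite agrees with the morphism $H^{\hdot}(\textsf B_g\textsf H\textsf H_S\textsf {OGC})\to H^{\hdot}(\textsf B_g\textsf H\textsf H_S\textsf {OGK})$ induced by the TQFT-morphism $\star\colon\textsf {Frob}^*\to\overline{\textsf N}^{fr}$ via functoriality of deformation complexes (the map displayed just before the theorem statement). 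Since the first map in the chain is an isomorphism and the remaining two are injective, the composite is injective, which is exactly the assertion of the corollary.

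The step I expect to require the most care is verifying that the composite constructed this way literally coincides with the functorially-induced map $\textsf B_g\textsf H\textsf H_S\textsf {OGC}^{\hdot}\to\textsf B_g\textsf H\textsf H_S\textsf {OGK}^{\hdot}$ rather than merely some map with the same source and target. This amounts to checking that the horizontal identification $H^{\hdot}(\textsf B_g\textsf H_S\textsf {OGC}_{1})\cong H^{\hdot}(\textsf B_g\textsf H\textsf H_S\textsf {OGC})$ used in the theorem's diagram is compatible with the $\textsf{Frob}^*$-to-$\textsf{hoLieB}$-versus-$\overline{\textsf N}^{fr}$ comparison, i.e.\ that the TQFT-morphism \eqref{tqfts} restricted to zero weight recovers the Koszul-dual morphism $\textsf{hoLieB}\to\textsf{hoLieB}^{\diamond}$ up to the relevant quasi-isomorphisms. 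Granting the diagram of the preceding theorem as stated, however, this compatibility is built in, and the corollary follows formally. I would therefore present the proof as: invoke the commutative square of the theorem, observe the top map is an isomorphism and the other two relevant maps are monomorphisms, and conclude by the two-out-of-three-style diagram chase that the bottom-then-right (equivalently right-then-bottom) composite is a monomorphism.

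\begin{proof}
Combine the previous theorem with Proposition \ref{alt1}. By the theorem there is a commutative square
\begin{equation*}
\begin{diagram}[height=2.3em,width=2.3em]
 H^{\hdot}(\textsf B_g\textsf H_S\textsf {OGC}_{1}) & &  \rTo^{\sim} &  &  H^{\hdot}(\textsf B_g\textsf H\textsf H_S\textsf {OGC}) &  \\
\dTo_{}^{} & & &  & \dTo_ {}^{}  && \\
 H^{\hdot}(\textsf B_g\textsf H_S\textsf {OGK}_{1}) & &  \rInto^{}_{} &  & H^{\hdot}(\textsf B_g\textsf H\textsf H_S\textsf {OGK})
 \\
\end{diagram}
\end{equation*}
in which the top arrow is an isomorphism and the bottom arrow is injective. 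The left vertical arrow is induced by $j_!\colon \textsf H_S\textsf {O}\textsf {GC}^{\hdot}_1\overset{\sim}{\to}\textsf W_0\textsf H_S\textsf {OGK}_1^{\hdot}$ of Proposition \ref{alt1}, hence is injective as well. Therefore the composite along the left and bottom edges is injective, and by commutativity it equals the composite along the top and right edges. Since the top arrow is an isomorphism, the right vertical arrow $H^{\hdot}(\textsf B_g\textsf H\textsf H_S\textsf {OGC})\to H^{\hdot}(\textsf B_g\textsf H\textsf H_S\textsf {OGK})$ is injective, as claimed.
\end{proof}
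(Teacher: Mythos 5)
Your proof is correct and follows exactly the route the paper intends: the corollary is stated with only a ``Hence we get the following,'' and the intended argument is precisely your diagram chase through the commutative square of the preceding theorem, using that the top arrow is an isomorphism, the bottom arrow is the injection established there, and the left vertical arrow is injective via Proposition \ref{alt1} together with the inclusion of the weight-zero piece $H^{\hdot}(\textsf W_0\textsf H_S\textsf {OGK}_1)\hookrightarrow H^{\hdot}(\textsf H_S\textsf {OGK}_1)$ (which corresponds to $W_0H_c^{\hdot}\subset H_c^{\hdot}$ under Corollary \ref{we}). The only point you gloss slightly is that last injection --- Proposition \ref{alt1} lands in $\textsf W_0\textsf H_S\textsf {OGK}_1^{\hdot}$, not in the full complex, so the weight argument is genuinely needed --- but you flag it and it is standard in the paper's framework.
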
 
Under morphism \eqref{imp3} and Corollary \ref{we} this map correspond to the Chan-Galatius-Payne morphism \cite{CGP1}. 


%

\subsection{Properads or ribbon graphs } 

By a \textit{ribbon graph} $\Gamma$  we understand a triple $(H(\Gamma), \sigma_1, \sigma_0)$ where $H(\Gamma)$ is a finite set called the \textit{set of half
edges of} $\Gamma$, $\sigma_1\colon H(\Gamma)\longrightarrow  H(\Gamma)$ is a fixed point free involution and a permutation $\sigma_0 \colon H(\Gamma)\longrightarrow  H(\Gamma).$ Orbits of $\sigma_1$ are called the \textit{edges of} $\Gamma$, we will denote the set of edges of $\Gamma$ by $E(\Gamma).$  A set of orbits, $V(\Gamma):= H(\Gamma )/\sigma_0,$ is called the set of \textit{vertices of the ribbon graph} $\Gamma.$ We have a canonical map:
$$
p\colon H(\Gamma)\longrightarrow V(\Gamma):= H(\Gamma )/\sigma_0
$$
For each $v\in V(\Gamma)$ the pre-image of $p$ will be called a set of half-edges attached to a vertex $v:$
$$
p^{-1}(v):=h(v).
$$
The orbits of the permutation $\sigma_{2}:=\sigma_0^{-1}\sigma_1$ are called \textit{boundaries} of the ribbon graph $\Gamma.$ The set of boundaries of $\Gamma$ is denoted by $B(\Gamma)$. By the \textit{genus} of a ribbon graph, we understand the following quantity;
$$
g(\Gamma):=1+\frac{1}{2}(E(\Gamma)-V(\Gamma)-B(\Gamma))
$$
The definition implies that a ribbon graph $\Gamma$ is the same as a standard graph with a fixed cyclic structure on the set of half-edges $H_v(\Gamma)$ at each vertex $v\in V(\Gamma).$
\par\medskip

Recall some definitions from \cite{MW}. Let $d$ be an integer, denote by $\textsf{RGra}_d$ the \textit{properad of ribbon graphs}. This properad is defined by the collection of vector spaces $\{\textsf{RGra}_d(m,n)\}$ which consists of directed and connected ribbon graphs with $[n]$-labelled vertices and $[m]$-labelled boundaries with a certain choice of the orientation on the set of edges of a ribbon graph:
\begin{equation}\label{rb1}
\textsf{RGra}_d(m,n):=\bigoplus_{l\geq 0} \left(\mathbb Q\langle R_{m,n}^l \rangle \otimes_{P_l}
\mathrm{sgn}^{(d)}_l\right)[l(d-1)]
\end{equation}
where $R_{m,n}^l$ is a collection of ribbon graphs with $[n]$-labelled vertices and $[m]$-labelled boundaries and $[l]$-labelled edges. The permutation group $\Sigma_l$ acts on elements of $R_{m,n}^l$ by changing the orderings of edges, while the group $\Sigma_2^{\times l}$ acts by flipping the directions of edges. By $sgn_l^{(d)}$ we denote the one-dimensional representation of
the group $P_l:=\Sigma_l\times \Sigma_2^{\times l}$
on which $\Sigma_l$ acts trivially for $d$ odd and by sign for $d$ even, and each $\Sigma_2$ acts trivially for $d$ even and by sign for $d$ odd. Following \textit{ibid.} we define the composition:
\begin{equation}\label{rb2}
\circ\colon \textsf{RGra}_d(p,m)\otimes_{\mathbb Q} \textsf{RGra}_d(m,n) \longrightarrow \textsf{RGra}_d(p,n)
\end{equation}
As partial compositions $\circ_i$ which are defined by gluing the $i$-oriented boundary of $\Gamma\in \textsf{RGra}_d(m,n)$ to the $i$-vertex of a ribbon graph $\Gamma'\in \textsf{RGra}_d(p,m).$ Note that the operad $\textsf{RGra}_d$ is naturally graded by the genus of a ribbon graph. We have a natural genus grading on \eqref{rb1}. Moreover compositions $\circ_i$ respect this grading. Indeed $\Gamma'\circ_i\Gamma\in \textsf {RGra}_d(p+m-1,n+m-1)$ and since $|E(\Gamma)|+|E(\Gamma')|=|E(\Gamma'\circ_i\Gamma)|$ by formula \eqref{rb2} we get $g(\Gamma)+g(\Gamma')=g(\Gamma'\circ_i\Gamma).$ We call $\textsf{RGra}_d$ a \textit{properad of ribbon graphs}.This properad is equipped with a natural morphism:
\begin{equation}\label{cs1}
\diamond \colon \textsf {LieB}_{d,d}^{\diamond}\longrightarrow \textsf {RGra}_d.
\end{equation} 
This morphism is defined by by the rule:
$$
\diamond \colon [\,\,,\,\,]\longmapsto
\xy
 (0,0)*{\bullet}="a",
(5,0)*{\bullet}="b",
\ar @{-} "a";"b" <0pt>
\endxy \ \quad \diamond \colon \delta \longmapsto \ \xy
(0,-2)*{\bullet}="A";
(0,-2)*{\bullet}="B";
"A"; "B" **\crv{(6,6) & (-6,6)};
\endxy.
$$
Denote by $*\colon \textsf {LieB}_{d,d}^{\diamond}\longrightarrow \textsf {LieB}_{d,d}^{\diamond}\overset{\diamond}{\longrightarrow} \textsf {RGra}_d$ the composite morphism where the first arrow sendd cobracket to zero. We suggest the following:




\begin{Conj}\label{Conj1}
There is a morphism of properads (rather a correspondence):
\begin{equation}\label{mor1}
\Omega(\overline{\textsf N}^{fr})\longrightarrow \textsf {RGra}_0,
\end{equation} 
such that the following diagram commutes: 
\begin{equation}
\begin{diagram}[height=2.3em,width=2.3em]
\Omega(\textsf {Frob}^*) & &  \rTo^{\star} &  &   \Omega(\overline{\textsf N}^{fr}) &  \\
\dTo_{\sim}^{} & & &  & \dTo_ {\mathrm{}}^{}  && \\
\textsf {hoLieB}^{\diamond} & &  \rTo^{*}_{} &  & \textsf {RGra}_0   \\
\end{diagram}
\end{equation}
\end{Conj}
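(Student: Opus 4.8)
The plan is to extract the morphism \eqref{mor1} from K.~Costello's combinatorial model for the moduli of bordered Riemann surfaces \cite{Cost}. Recall that Costello realises the moduli of stable bordered surfaces with analytically parametrised boundary by a space of metric ribbon graphs (a framed refinement of the Penner--Kontsevich--Strebel decomposition), and that the open sector of the resulting open--closed TCFT is governed by ribbon graphs while the closed sector recovers the genus-zero Frobenius operations. Organising the gluing of framed surfaces and passing to rational chains, this should produce a zig-zag of quasi-isomorphisms of DG-coproperads
$$
\overline{\textsf N}^{fr}\ \overset{\sim}{\longleftarrow}\ \textsf R\ \overset{\sim}{\longrightarrow}\ \textsf{rb}_0,
$$
where $\textsf R$ is the chain coproperad of metric ribbon graphs with closed (nodal) decorations and $\textsf{rb}_0$ is the ribbon-graph coproperad whose properadic cobar construction is quasi-isomorphic to $\textsf{RGra}_0$; under this last identification the cobar differential on $\Omega(\textsf{rb}_0)$ is the vertex-splitting differential and the new generators are the edges, placed in degree $-1$ exactly as in \eqref{rb1} for $d=0$. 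Applying $\Omega$ to the zig-zag and composing with $\Omega(\textsf{rb}_0)\to \textsf{RGra}_0$ then yields the desired correspondence $\Omega(\overline{\textsf N}^{fr})\to \textsf{RGra}_0$ as a roof of morphisms of DG-properads; concretely a cobar generator, i.e. a rational chain on $\overline{\mathcal N}^{fr}_{g,[m],[n]}$, is sent to the sum of genus-$g$ ribbon graphs indexed by the Strebel strata it meets, with the appropriate cellular multiplicities.

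For the target to be literally Merkulov--Willwacher's $\textsf{RGra}_0$ one checks that the gradings match: a ribbon graph with $l$ edges sits in degree $-l$ by \eqref{rb1}, which is the cobar degree of a graph of $l$ co-composition edges, and the orientation $\mathrm{sgn}^{(0)}_l$ is the standard cobar sign on $\det(E(\Gamma))$; the genus grading matches because properadic composition is additive in genus (as recorded in the excerpt for $\textsf{RGra}_d$, and true for gluing of surfaces) and Costello's equivalence identifies genus-$g$ bordered moduli with genus-$g$ ribbon graphs, so the full complex $C^{\hdot}(\overline{\mathcal N}^{fr}_{g,\cdot,\cdot})$ sitting at a cobar vertex is replaced by genus-$g$ ribbon graphs while the cobar edges add edges without changing the total genus. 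Formality of $\overline{\textsf N}^{fr}$ \cite{FORM} and the identification of $H^{\hdot}(\textsf{RGC}_0(\delta+\Delta_1))$ with the totality of shifted $H^{\hdot}_c(\mathcal M_g)$ from \cite{AK4} then serve as a consistency check that the two sides have the same cohomology.

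Commutativity of the square is essentially forced. The left edge is the canonical quasi-isomorphism $\Omega(\textsf{Frob}^*)\overset{\sim}{\longrightarrow}\textsf{hoLieB}^{\diamond}$ of \cite{CMW}, and the top edge $\star$ is induced by $\textsf{Frob}^*\hookrightarrow \overline{\textsf N}^{fr}$, which lands in the genus-zero, cohomological-degree-zero part; under Costello's dictionary this is the tree-level (genus-zero, loopless) part of the ribbon graphs, whose cobar maps to $\textsf{RGra}_0$ precisely by the Chas--Sullivan rule defining $*\colon\textsf{hoLieB}^{\diamond}\to \textsf{RGra}_0$ (bracket $\mapsto$ edge, cobracket $\mapsto 0$, higher generators $\mapsto 0$, being carried by positive genus or by extra boundaries). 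Hence the two composites $\Omega(\textsf{Frob}^*)\to \textsf{RGra}_0$ agree up to coherent homotopy, which is all that is claimed. For the refinement needed in Theorem \ref{MW4} (Remark \ref{mgr}) one would in addition carry the genus/weight filtrations through the zig-zag so that the induced map on associated graded pieces is the Chan--Galatius--Payne morphism \cite{CGP1}.

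The main obstacle, I expect, is matching the compactifications. Costello's framework is built on moduli of bordered surfaces with open-string (boundary) degenerations, whereas $\overline{\mathcal N}^{fr}$ also carries closed-string nodal degenerations in the interior, and the target $\textsf{RGra}_0$ has no nodes at all; so these interior nodes must be absorbed combinatorially into the loops of the ribbon graph, and one must verify that the clutching morphisms \eqref{glue} are compatible with this absorption and that the cobar differential then reproduces \emph{both} the $\delta$-type (vertex-splitting) and the $\Delta_1$-type (Bridgeland) pieces of the differential on $\textsf{RGC}_0$. A secondary difficulty is homotopical: the properadic cobar functor is homotopy-invariant only on conilpotent coproperads after cofibrant replacement, so Costello's equivalence must first be promoted to a genuine $\infty$-morphism, or strictified through an explicit intermediate model, with $\star$ and the Frobenius identification surviving the strictification. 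Settling these two points is, in my view, the entire content of the conjecture.
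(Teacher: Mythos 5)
This statement is a conjecture: the paper does not prove it, and neither do you. What the paper offers is (i) a genus-zero verification (the Lemma identifying the square $\Omega(\textsf{Comm})\to\Omega(\textsf{Hycomm})$ over $\textsf{Lie}\to\textsf{RTree}_0$), and (ii) Remark \ref{mgr} and the subsequent remarks, which sketch exactly the mechanism you describe — a roof $\Omega(\overline{\textsf N}^{fr})\leftarrow \textsf{GRav}_0\rightarrow \textsf{RGra}_0$ expected to come from K.~Costello's homotopy equivalence, possibly through M.~Liu's moduli of bordered surfaces with all quadratic singularities. Your zig-zag through a chain coproperad of decorated metric ribbon graphs is the same expected route, so you have correctly reconstructed the paper's intended strategy; but a strategy is all it is, on both sides.

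The genuine gaps are the ones you yourself flag in your last paragraph, and they are not peripheral: they are the whole theorem. First, Costello's ribbon-graph model governs bordered surfaces with boundary degenerations, while $\overline{\mathcal N}^{fr}$ carries interior nodal degenerations and the clutching maps \eqref{glue}; no argument is given that these interior strata can be absorbed so that the cobar differential of $\overline{\textsf N}^{fr}$ maps to the vertex-splitting differential of $\textsf{RGra}_0$ (this is where the Bridgeland piece $\Delta_1$ must appear, and nothing in your sketch produces it). Second, your claim that commutativity of the square is ``essentially forced'' is not justified: $\star$ lands in $H^0$ of the closed sector, and showing that the hypothetical right-hand vertical arrow sends this to the Chas--Sullivan generators (bracket $\mapsto$ edge, cobracket $\mapsto 0$) presupposes the very morphism \eqref{mor1} whose existence is at issue; with only a roof of morphisms one must also say in what homotopical sense the square commutes. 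Your cohomological ``consistency check'' via $\textsf{RGC}_0(\delta+\Delta_1)$ and $H_c^{\hdot}(\mathcal M_g)$ is evidence, not proof. In short: your proposal matches the paper's intended approach but, like the paper, leaves the conjecture open; it should not be presented as a proof.
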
 
Let us speculate a little about this conjecture:
\begin{remark}\label{mgr} For every $d\in \mathbb Z$ denote by $\textsf {GRav}_d$ the \textit{gravity properad} from \cite{Mer}. This DG-properad can be defined by applying T. Willwacher's properadic twisting construction \cite{Will}  to the properad $\textsf {RGra}_d.$ Hence it is equipped with a canonical morphism:
$$
\textsf{can}\colon \textsf {GRav}_d\longrightarrow \textsf{RGra}_d.
$$
The space of operations $\textsf {GRav}_d(m,n)$ of this properad is quasi-isomorphic by the chains of K. Costello's moduli spaces $D_{g,m,0,n}$ of nodal disks with $[n]$-marked points in the interior and $[m]$-labelled boundaries \cite{Cost}. 
I suspect that morphism \eqref{mor1} "factors" through the Gravity properad:
$$
\Omega(\overline{\textsf N}^{fr}) \longleftarrow \textsf {GRav}_0\longrightarrow \textsf{RGra}_0
$$
I expect that the left hand-side arrow is a (zigzag) quasi-isomorphism. Moreover there is a morphism of properads $\textsf {LieB}^{\diamond}_{0,0}\rightarrow \textsf {GRav}_0$ (the cobracket goes to zero) and one can prove that there is an injective morphism between the deformation complexes of $\textsf{RGra}_0$ and  $\textsf {GRav}_0.$ 

\end{remark} 

\begin{remark} Conjecture \ref{Conj1} can be viewed as a version of the correspondence between different TQFTs (cf. \cite{Cost1}). The properad $\textsf{RGra}_0,$ more precisely $\textsf {GRav}_0$ (see Remark \ref{mgr}), corresponds to moduli spaces of bordered surfaces (disks) with singularities on boundary. The properad  $\overline{\textsf N}^{fr}$ corresponds to bordered surfaces with singularities in interior. I expect that the correspondence \eqref{mor1} is induced by K. Costello's homotopy equivalence and factors through M. Liu's moduli space of bordered surfaces with all possible quadratic singularities allowed \cite{Liu}.

\end{remark} 

\begin{remark} One may state an analog of Conjecture \eqref{Conj1} by replacing a morphism $*$ with the Chas-Sullivan morphism $\diamond:$ 
\begin{equation}
\begin{diagram}[height=2.3em,width=2.3em]
\Omega(\textsf {Frob}^*) & &  \rTo^{\star} &  &   \Omega(\overline{\textsf N}^{fr}) &  \\
\dTo_{\sim}^{} & & &  & \dTo_ {\mathrm{}}^{}  && \\
\textsf {hoLieB}^{\diamond} & &  \rTo^{\diamond}_{} &  & \textsf {RGra}_0   \\
\end{diagram}
\end{equation}
There are two reasons (at least) to expect that this "stronger" statement is not true. First is that according to \cite{Mer} there is a morphism $\textsf {qLieB}\rightarrow H^{\hdot}(\textsf {GRav}),$ where $\textsf {qLieB}$ is the properad of quasi-Lie bialgebras in the sense of V. Drinfeld \cite{Drin4} and this morphism \textit{does not} factor through the properad of Lie bialgebras. The second reason is that the using Remark \ref{Nonm} from the proof of Theorem \ref{MW3} one surprisingly extracts the main result of \cite{AK4}.
\end{remark} 

To support this conjecture we do the following constructions. Recall that if we restrict the properad $\overline{\textsf N}^{fr}$ to the underlying cooperad we get the cooperad rom \cite{AV}. By Lemma from \textit{ibid} this cooperad is equivalent to hypercommutative cooperad $\textsf{Hycomm}$ i.e. a cooperad with $k$-operations given by $H^{\hdot}(\overline{\mathcal M}_{0,k},\mathbb Q).$ The latter cooperad is quasi-isomorphic to the bar construction applied to E. Getzler's operad gravity operad $\textsf{Gr}$ i.e. an operad with controls open locus of smooth rational curves \cite{GK}. The latter operad is equivalent to the operad of ribbon trees $\textsf {RTree}_0$ which is equivalent to the operadic part of $\textsf {RGra}_0$ i.e. we consider ribbon graphs of genus zero with exactly one boundary. Then the corresponding twisted operad is equivalent to the gravity operad $\textsf {Gr}$ \cite{Wa}. Moreover one has a morphism $\textsf {Comm}\longrightarrow \textsf{Hycomm}$ such that the Koszul dual morphism is $\textsf {Lie}\longrightarrow \textsf{Gr}$ Hence the get the proof of the following

\begin{Lemma} Conjecture \ref{Conj1} is true in genus zero i.e. the following square commutes:

\begin{equation}
\begin{diagram}[height=2.3em,width=2.3em]
\Omega(\textsf {Comm}) & &  \rTo^{} &  &  \Omega(\textsf {Hycomm})   &  \\
\dTo_{\sim}^{} & & &  & \dTo_ {}^{}  && \\
\textsf {Lie} & &  \rTo^{}_{} &  & \textsf {RTree}_0   \\
\end{diagram}
\end{equation}
\end{Lemma}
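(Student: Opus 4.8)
The plan is to prove the genus-zero case by restricting the whole square of Conjecture \ref{Conj1} to operadic parts, where it becomes a statement about Koszul duality of (cyclic) operads assembled from the three inputs recalled just above the Lemma.

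\emph{Identifying the four arrows.} Restrict $\overline{\textsf N}^{fr}$ to its underlying cooperad and $\textsf{RGra}_0$ to its operadic part, i.e. to ribbon graphs of genus zero with exactly one boundary. By the Lemma of \cite{AV} the cooperad $\overline{\textsf N}^{fr}|_{\mathrm{op}}$ is the hypercommutative cooperad $\textsf{Hycomm}$, with $k$-ary cooperations $H^{\hdot}(\overline{\mathcal M}_{0,k},\mathbb Q)$, and under this identification $\star$ becomes $\Omega$ applied to the canonical inclusion $\textsf{Comm}^{*}\hookrightarrow\textsf{Hycomm}$; the left edge is the Koszul quasi-isomorphism $\Omega(\textsf{Comm}^{*})\xrightarrow{\sim}\textsf{Lie}$; and the bottom edge is the restriction of the Merkulov--Willwacher morphism $*$, which sends the Lie bracket to the single-edge ribbon graph $\bullet\!-\!\bullet\in\textsf{RGra}_0(1,2)$ (the unique genus-zero one-boundary graph of arity two). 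For the right edge, Getzler's Koszul duality \cite{GK} gives $\textsf{Hycomm}\simeq B\,\textsf{Gr}$ for the gravity operad $\textsf{Gr}$, hence $\Omega(\textsf{Hycomm})\simeq\Omega B\,\textsf{Gr}\simeq\textsf{Gr}$; composing with $\textsf{Gr}\simeq\textsf{RTree}_0$ and the identification of $\textsf{RTree}_0$ with the operadic part of $\textsf{RGra}_0$ (using, if necessary, the twisting equivalence of \cite{Wa}) yields the morphism $\Omega(\textsf{Hycomm})\to\textsf{RTree}_0$.

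\emph{Commutativity.} With the four arrows in place, the square commutes provided (i) under Koszul duality the inclusion $\textsf{Comm}^{*}\hookrightarrow\textsf{Hycomm}$ corresponds to the canonical map $\textsf{Lie}\to\textsf{Gr}$ (bracket $\mapsto$ gravity bracket), which is exactly the statement recalled before the Lemma, and (ii) the transported composite $\textsf{Lie}\xleftarrow{\sim}\Omega(\textsf{Comm}^{*})\to\Omega(\textsf{Hycomm})\xrightarrow{\sim}\textsf{Gr}\simeq\textsf{RTree}_0\subset\textsf{RGra}_0$ coincides with the restriction of $*$. Both maps $\textsf{Lie}\to\textsf{RGra}_0$ are determined by the image of the single binary generator, and the only genus-zero, one-boundary ribbon graph in arity two is the edge $\bullet\!-\!\bullet$, to which the gravity bracket (the fundamental class of the point $M_{0,3}$) is carried under $\textsf{Gr}\simeq\textsf{RTree}_0$; since this is precisely the definition of $*$ on generators, (ii) follows.

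\emph{The main obstacle} is the homotopy coherence of the zigzags rather than any individual identification: one must be able to choose the equivalences $\overline{\textsf N}^{fr}|_{\mathrm{op}}\simeq\textsf{Hycomm}$ of \cite{AV} and $\textsf{Gr}\simeq\textsf{RTree}_0$ of \cite{Wa} so that the two resulting morphisms $\Omega(\textsf{Comm}^{*})\to\textsf{RTree}_0$ — the one through $\star$ and the one through $*$ — agree \emph{up to homotopy}, not merely after passing to cohomology. The way I would handle this is by rigidity: since $\textsf{Comm}$ is Koszul and the target $\textsf{Gr}$ has vanishing differential, homotopy classes of morphisms $\Omega(\textsf{Comm}^{*})\to\textsf{Gr}$ are controlled by the induced cooperad map $\textsf{Comm}^{*}\to H^{\hdot}(\textsf{Gr})=\textsf{Hycomm}$, and both candidates induce the tautological inclusion, hence are homotopic. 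This is exactly the feature that fails in higher genus, where $\Omega(\overline{\textsf N}^{fr})$ enjoys no such rigidity and one is forced into the geometry of K. Costello's moduli spaces — which is why Conjecture \ref{Conj1} remains open there.
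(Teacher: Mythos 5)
Your proposal follows the paper's own route exactly: restrict to operadic parts, identify the underlying cooperad of $\overline{\textsf N}^{fr}$ with $\textsf{Hycomm}$ via the Lemma of \cite{AV}, use Getzler's Koszul duality $\textsf{Hycomm}\simeq B\,\textsf{Gr}$ together with the twisting equivalence $\textsf{Gr}\simeq\textsf{RTree}_0$ from \cite{Wa}, and invoke the compatibility of $\textsf{Comm}\to\textsf{Hycomm}$ with $\textsf{Lie}\to\textsf{Gr}$ under Koszul duality. You in fact go further than the paper, which stops at this chain of identifications and simply asserts the conclusion, whereas you check commutativity on the binary generator and supply the rigidity argument (vanishing differential on $\textsf{Gr}$ controlling homotopy classes of maps out of $\Omega(\textsf{Comm}^{*})$) needed to make the square commute up to coherent homotopy rather than merely on cohomology.
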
 

Note that the deformation complex of propepad $\textsf {LieB}_{d,d}$ can be be identified with an oriented graph complex $\textsf {OGC}_{2d+1}^{\hdot}$ \cite{MW2}. Following \cite{MW} we denote the deformation complex of the morphism $\textsf {LieB}_{d,d}\rightarrow \textsf{RGra}_d$ by $\textsf {RGC}_d^{\hdot}(\delta+\Delta_1).$ Applying the functoriality of deformation complexes from \eqref{cs1} we get a morphism 
\begin{equation}\label{mw}
\textsf {OGC}_{2d+1}^{\hdot}\longrightarrow \textsf {RGC}_d^{\hdot}(\delta+\Delta_1)
\end{equation} 
Following \textit{ibid.} we have:
\begin{Conj}[S. Merkulov and T. Willwacher '15] Morphism \eqref{mw} induces a monomorphism in the cohomology.

\end{Conj}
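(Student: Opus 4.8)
The plan is to make explicit the implication recorded in Theorem~\ref{MW4}: granting Conjecture~\ref{MW1} in the refined form of Remark~\ref{mgr}, the statement follows by assembling the constructions of Sections~2--4. First I would reduce to $d=0$. The oriented graph complexes $\textsf{OGC}_{2d+1}^{\hdot}$ for different $d$ differ only by operadic suspension (a degree shift and a determinant twist), and the same is true of the deformation complexes $\textsf{RGC}_d^{\hdot}(\delta+\Delta_1)$, while the Chas--Sullivan morphism \eqref{cs1} intertwines these suspensions; so injectivity of \eqref{mw} for one value of $d$ gives it for all. Then, using the hairy dictionary of Remark~\ref{Nonm} — gluing analytic disks onto the outgoing boundaries and the compatibility of the gluing maps with the Deligne--Mumford forgetful morphisms $\pi_j\colon\overline{\mathcal M}_{g,p}\to\overline{\mathcal M}_{g,p-1}$ — I would express $\textsf{OGC}_1^{\hdot}$ through the $S$-marked complexes $\textsf{B}_g\textsf{H}_S\textsf{OGC}^{\hdot}$ ($g\geq 0$, $2g+|S|-2>0$) and $\textsf{RGC}_0^{\hdot}(\delta+\Delta_1)$ through the matching ribbon-graph pieces, so that \eqref{mw} becomes the assembly over $g$ and $S$ of the canonical morphisms appearing in Theorem~\ref{def1}.

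\textbf{Factoring through $\Omega(\overline{\textsf N}^{fr})$.} By Theorem~\ref{def1} and Corollary~\ref{imp}, the canonical morphism $\textsf{B}_g\textsf{HH}_S\textsf{OGC}^{\hdot}\to\textsf{B}_g\textsf{HH}_S\textsf{OGK}^{\hdot}$ is injective on cohomology, and $H^{\hdot}(\textsf{B}_g\textsf{HH}_S\textsf{OGC})\cong H^{\hdot}(\textsf{B}_g\textsf{H}_S\textsf{OGC})$ by Proposition~9 of \cite{AWZ}. Applying the functoriality of deformation complexes to the roof of Conjecture~\ref{MW1} in the form of Remark~\ref{mgr},
$$
\Omega(\overline{\textsf N}^{fr})\;\longleftarrow\;\textsf{GRav}_0\;\longrightarrow\;\textsf{RGra}_0,
$$
whose left leg is a zig-zag quasi-isomorphism compatible with the $\textsf{hoLieB}_0^{\diamond}$-structures and whose middle term is modelled on chains of Costello's moduli $D_{g,m,0,n}$ of nodal disks \cite{Cost}, one obtains a zig-zag of deformation complexes with quasi-isomorphic left leg joining $\textsf{B}_g\textsf{HH}_S\textsf{OGK}^{\hdot}$ to the ribbon-graph deformation complex; moreover the induced map $H^{\hdot}(\textsf{B}_g\textsf{HH}_S\textsf{OGK})\to(\text{ribbon piece})$ is, on cohomology, identified with the Chan--Galatius--Payne morphism of \cite{CGP2}. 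Thus, on cohomology, \eqref{mw} factors as the injection of Theorem~\ref{def1}/Corollary~\ref{imp} followed by a quasi-isomorphism, and is therefore a monomorphism.

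\textbf{Matching the geometric input.} To identify the two sides with known computations: by \cite{AK4} the cohomology of $\textsf{RGC}_0^{\hdot}(\delta+\Delta_1)$ is the totality of the shifted groups $H_c^{\hdot}(\mathcal M_g,\mathbb Q)$; by Corollary~\ref{we} the oriented Getzler--Kapranov cohomology computes these same groups together with their weight quotients; and by \cite{CGP1} the $S$-marked graph cohomology realises the top-weight part of $H_c^{\hdot}(\mathcal M_{g,S},\mathbb Q)$. Feeding these into the commuting square of Theorem~\ref{def1} shows that every cohomology class of $\textsf{OGC}_{2d+1}^{\hdot}$ has nonzero image under \eqref{mw} — indeed its image is precisely the image under the injective Chan--Galatius--Payne-type morphism — so reassembling over $g$ and $S$ and undoing the suspension of the first step gives the monomorphism for all $d$.

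\textbf{Main obstacle.} Everything above is formal once Conjecture~\ref{MW1} is available, and that is the hard part. One must promote K.~Costello's homotopy equivalence \cite{Cost} between moduli of nodal disks (singularities on the boundary) and the relevant moduli of stable framed surfaces (singularities in the interior) to a roof of DG-properads, and — the delicate point — make it compatible with the two TQFT structures, i.e.\ check that on both sides the cobracket is sent to zero coherently, together with all the higher homotopies of $\textsf{hoLieB}_0^{\diamond}$. The natural route is to factor the correspondence through M.~Liu's moduli of bordered surfaces with all quadratic boundary singularities \cite{Liu}; establishing this compatibility — together with the auxiliary skeletal comparison flagged in the proof of the Živković quasi-isomorphism (Theorem~\ref{ziv}) — is where the genuine work lies, the remaining weight-filtration and functoriality steps being routine given the machinery of the paper.
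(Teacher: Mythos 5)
Your proposal takes essentially the same route as the paper: the paper likewise does not prove this conjecture unconditionally, but only establishes (Theorem~\ref{MW3}) that Conjecture~\ref{Conj1} together with Remark~\ref{mgr} implies it, by factoring \eqref{mw} through $\mathrm{Def}(\Omega(\textsf{AC})^*\rightarrow\overline{\textsf{N}}^{fr})$ via the $\textsf{GRav}_0$ roof, invoking the injectivity of Corollary~\ref{imp3} and its identification with the Chan--Galatius--Payne morphism, and then appealing to \cite{AK4} and \cite{CGP1}. You correctly flag that the genuine open content is Conjecture~\ref{MW1} itself, which matches the paper's own assessment.
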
 

We have the following: 

\begin{Th}\label{MW3} Conjecture \ref{Conj1} together with Remark \ref{mgr} implie the Merkulov-Willwacher conjecture.

\end{Th}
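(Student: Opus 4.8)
The plan is to identify the Merkulov--Willwacher morphism~\eqref{mw}, on cohomology, with a composite of quasi-isomorphisms and cohomology-injections whose essential link is the injection of Corollary~\ref{imp} and whose remaining injection is the one between the deformation complexes of $\textsf{GRav}_0$ and $\textsf{RGra}_0$ postulated in Remark~\ref{mgr}. First I would reduce to $d=0$: both $\textsf{OGC}^{\hdot}_{2d+1}$ and $\textsf{RGC}^{\hdot}_d(\delta+\Delta_1)$ depend on $d$ only through an overall degree shift and the parity-dependent sign representation of~\eqref{rb1}, and~\eqref{mw} is compatible with these identifications, the two parities being handled identically. Next, by the machinery of this paper (Proposition~\ref{alt1}, Corollary~\ref{we} and Remark~\ref{Nonm}) together with Proposition~$9$ of~\cite{AWZ}, the source $\textsf{OGC}^{\hdot}_1$ is, genus by genus and after the exact operation of adjoining and skew-symmetrising the output hairs, computed by $\mathrm{Def}_g(\Omega(\textsf{AC}^*)\to\textsf{hoLieB})=\textsf B_g\textsf H\textsf H_S\textsf{OGC}^{\hdot}$, while the target $\textsf{RGC}^{\hdot}_0(\delta+\Delta_1)=\mathrm{Def}(\textsf{LieB}_{0,0}\xrightarrow{\diamond}\textsf{RGra}_0)$ will be matched below with $\mathrm{Def}_g(\Omega(\textsf{AC}^*)\to\Omega(\overline{\textsf N}^{fr}))=\textsf B_g\textsf H\textsf H_S\textsf{OGK}^{\hdot}$. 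It then suffices to prove the $S$-marked genus-$g$ statement for every $g$ and non-empty finite $S$ with $2g+|S|-2>0$ and to check that the resulting map is precisely the one of Corollary~\ref{imp}.

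The second step converts Conjecture~\ref{Conj1} and its refinement Remark~\ref{mgr} into a diagram of deformation complexes. From Remark~\ref{mgr} I have the roof of DG-properads $\textsf{RGra}_0\xleftarrow{\textsf{can}}\textsf{GRav}_0\xrightarrow{\ \sim\ }\Omega(\overline{\textsf N}^{fr})$ with the right-hand arrow a (zig-zag) quasi-isomorphism, compatible with the morphisms out of $\textsf{hoLieB}^{\diamond}=\Omega(\textsf{Frob}^*)$ in Conjecture~\ref{Conj1}; precomposing with the canonical sequence $\Omega(\textsf{AC}^*)\to\textsf{hoLieB}\to\textsf{hoLieB}^{\diamond}$ from the proof of Proposition~\ref{alt1} gives the same roof under $\Omega(\textsf{AC}^*)$. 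Applying functoriality of deformation complexes — for the zig-zag one works on cohomology, using that $\mathrm{Def}(\Omega(\textsf{AC}^*)\to-)$ sends the weak equivalence $\textsf{GRav}_0\xrightarrow{\ \sim\ }\Omega(\overline{\textsf N}^{fr})$ to a quasi-isomorphism — one obtains, for each $g$ and $S$,
\[
\textsf B_g\textsf H\textsf H_S\textsf{OGK}^{\hdot}=\mathrm{Def}_g(\Omega(\textsf{AC}^*)\xrightarrow{\star}\Omega(\overline{\textsf N}^{fr}))\xleftarrow{\ \sim\ }\mathrm{Def}_g(\Omega(\textsf{AC}^*)\to\textsf{GRav}_0)\xrightarrow{\ \textsf{can}_*\ }\mathrm{Def}_g(\Omega(\textsf{AC}^*)\to\textsf{RGra}_0),
\]
where $\textsf{can}_*$ is injective in cohomology: this is the $\textsf{AC}$-version of the injectivity of deformation complexes in Remark~\ref{mgr}, obtained from the retraction $\textsf{hoLieB}\rightleftarrows\Omega(\textsf{AC}^*)$ coming from $\textsf{Frob}^{\diamond}\to\textsf{AC}\to\textsf{Frob}^{\diamond}$.

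The third step is the identification of the $S$-marked genus-$g$ part of $\mathrm{Def}(\Omega(\textsf{AC}^*)\to\textsf{RGra}_0)$ with that of $\textsf{RGC}^{\hdot}_0(\delta+\Delta_1)$. Both are twistings of the same convolution properad, by Maurer--Cartan elements which I would show are gauge equivalent: the cobracket-decoration carried by the $\textsf{AC}$-structure maps, through $\Omega(\overline{\textsf N}^{fr})$, onto the loop-with-two-boundaries generator of $\textsf{RGra}_0$ that supports the Bridgeland differential $\Delta_1$, so that the Chas--Sullivan twisting element (producing $\delta+\Delta_1$) and the one coming from $\Omega(\textsf{AC}^*)\to\textsf{RGra}_0$ differ by a gauge transformation. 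The point to be made here is that the composite $\Omega(\overline{\textsf N}^{fr})\to\textsf{RGra}_0$ still detects the Frobenius co-comultiplication even though its restriction along $\textsf{hoLieB}^{\diamond}$ is the cobracket-killing map $*$ of Conjecture~\ref{Conj1}; thus, despite $*$, the full deformation complex retains the $\Delta_1$-term.

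Assembling the three steps, the $S$-marked genus-$g$ instance of~\eqref{mw} is identified in cohomology with the composite
\[
H^{\hdot}(\textsf B_g\textsf H_S\textsf{OGC}_1)\cong H^{\hdot}(\textsf B_g\textsf H\textsf H_S\textsf{OGC})\hookrightarrow H^{\hdot}(\textsf B_g\textsf H\textsf H_S\textsf{OGK})\cong H^{\hdot}(\mathrm{Def}_g(\Omega(\textsf{AC}^*)\to\textsf{GRav}_0))\hookrightarrow H^{\hdot}(\textsf B_g\textsf{RGC}_0(\delta+\Delta_1)),
\]
the first isomorphism being Proposition~$9$ of~\cite{AWZ}, the first injection Corollary~\ref{imp}, the middle isomorphism the zig-zag quasi-isomorphism of Step~2, and the last injection $\textsf{can}_*$; summing over $g$ and skew-symmetrising hairs yields the injectivity of~\eqref{mw}, i.e. the Merkulov--Willwacher conjecture (Conjecture~\ref{MW}). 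As a byproduct the argument re-derives the main result of~\cite{AK4}, since Corollary~\ref{we} identifies $H^{\hdot}(\textsf B_g\textsf H\textsf H_S\textsf{OGK})$, hence $H^{\hdot}(\textsf{RGC}_0(\delta+\Delta_1))$, with a shift of $H^{\hdot}_c(\mathcal M_g,\mathbb Q)$. The main obstacle is the gauge-equivalence claim of Step~3: matching $\Delta_1$ with the boundary-gluing component of the differential on $\mathrm{Def}(\Omega(\textsf{AC}^*)\to\Omega(\overline{\textsf N}^{fr}))$ requires controlling exactly how Costello's homotopy equivalence — the expected source of~\eqref{mor1} — interacts with the Frobenius co-comultiplication, and in particular verifying that the cobracket-killing feature of $*$ in Conjecture~\ref{Conj1} is an artefact of restriction to $\textsf{hoLieB}^{\diamond}$ and does not annihilate the $\Delta_1$-term in the full deformation complex. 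This is precisely where the refinement of Remark~\ref{mgr} (the factorisation through $\textsf{GRav}_0$ and the injectivity of its deformation complex) is indispensable, and it is also the reason the naive ``$\diamond$-version'' of Conjecture~\ref{Conj1} is neither expected to hold nor required.
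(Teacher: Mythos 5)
Your overall strategy is the same as the paper's: convert Conjecture \ref{Conj1} and Remark \ref{mgr} into a commutative diagram of deformation complexes over $\Omega(\textsf{AC}^*)$ whose bottom row is the roof through $\textsf{GRav}_0$, feed in the injectivity of Corollary \ref{imp} on the $\overline{\textsf N}^{fr}$ side, chase the diagram to deduce injectivity into the $\textsf{RGra}_0$ side, and finally pass from the $S$-marked genus-$g$ complexes to $\textsf{OGC}_{2d+1}\to\textsf{RGC}_d(\delta+\Delta_1)$. Your Step 3 --- matching the Maurer--Cartan element coming from $\Omega(\overline{\textsf N}^{fr})\to\textsf{RGra}_0$ with the Chas--Sullivan one so that the $\Delta_1$-term is actually present in the target deformation complex --- makes explicit a point the paper delegates entirely to the citations of \cite{AK4} and \cite{AWZ}, and you are right to flag it as the genuine difficulty.

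The concrete gap is in your treatment of the $\textsf{GRav}_0$/$\textsf{RGra}_0$ comparison. You use the functorial map $\textsf{can}_*\colon\mathrm{Def}(\Omega(\textsf{AC}^*)\to\textsf{GRav}_0)\to\mathrm{Def}(\Omega(\textsf{AC}^*)\to\textsf{RGra}_0)$ and your argument needs it to be injective in cohomology; but the justification you offer --- the retraction $\textsf{Frob}^{\diamond}\to\textsf{AC}\to\textsf{Frob}^{\diamond}$ --- lives entirely on the source $\Omega(\textsf{AC}^*)$ and says nothing about comparing the two targets, so it cannot yield injectivity of $\textsf{can}_*$. Moreover, Remark \ref{mgr} (and the diagram in the paper's own proof) supplies an injection in the \emph{opposite} direction, from the deformation complex with target $\textsf{RGra}_0$ into the one with target $\textsf{GRav}_0$. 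With that orientation no injectivity of $\textsf{can}_*$ is needed: one only needs a map $\mathrm{Def}(\Omega(\textsf{AC}^*)\to\textsf{RGra}_0)\to\mathrm{Def}(\Omega(\textsf{AC}^*)\to\textsf{GRav}_0)$ making the square commute, because then the composite out of $\mathrm{Def}(\Omega(\textsf{AC}^*)\to\textsf{LieB})$ through $\textsf{RGra}_0$ agrees in cohomology with the composite through $\overline{\textsf N}^{fr}$ and $\textsf{GRav}_0$, which is injective by Corollary \ref{imp} and the zig-zag quasi-isomorphism; injectivity of the first factor follows formally. You should rewrite your Step 2 and the final chain of maps in that form; the remainder of your argument then matches the paper's.
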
 

\begin{proof}  By Conjecture \ref{Conj1} and Remark \ref{mgr} we have the commutative diagram:

\begin{equation*}
\begin{diagram}[height=2.3em,width=2.3em]
 \mathrm {Def}(\Omega(\textsf {AC})^*\rightarrow \textsf {LieB}) & &  \rTo^{\sim} &  & \mathrm {Def}(\Omega(\textsf {AC})^*\rightarrow \textsf {LieB}) & &  \lTo^{\sim} &  &   \mathrm {Def}(\Omega(\textsf {AC})^*\rightarrow \textsf {LieB}) &  \\
\dTo_{}^{} & & &  &\dTo_{}^{} & & &  & \dTo_ {}^{}  && \\
 \mathrm {Def}(\Omega(\textsf {AC})^*\rightarrow \overline{\textsf {N}}^{fr}) & &  \rTo_{\sim} &  &  \mathrm {Def}(\Omega(\textsf {AC})^*\rightarrow \textsf {GRav}) & &  \lInto &  &  \mathrm {Def}(\Omega(\textsf {AC})^*\rightarrow \textsf {RGra})   \\
\end{diagram}
\end{equation*}
By Corollary \ref{imp3} the left vertical arrows is injective and moreover coincides with the Chan-Galatius-Payne morphism. Hence we get a proof of Conjecture $25$ from \cite{AWZ}. Further by \cite{AK4} and \cite{AK} we get the result.

\end{proof}

\bibliographystyle{alphanum}
\bibliography{tt}

\end{document}